\documentclass{amsart}
\usepackage[utf8]{inputenc}
\usepackage{amsthm}
\usepackage{amsmath}
\usepackage{amssymb}
\usepackage{enumitem}
    \setlist[enumerate,1]{label=(\alph*), font=\normalfont}
    \setlist[enumerate,2]{label=(\roman*), font=\normalfont}
    \setlist{nosep}
\usepackage{tikz}
\usepackage{xcolor}
 \usepackage{subcaption}
 \captionsetup[subfigure]{labelfont=rm}

\usepackage{todonotes}

\newcommand{\qf}[1]{\langle #1\rangle}

\usepackage[
backend=biber,
style=alphabetic,
sorting=nyt,
isbn=false,
url=false,
]{biblatex}
\addbibresource{bibliography.bib}
\usepackage{hyperref}
\usepackage{cleveref}

\usepackage{multirow}

\newtheorem{definition}{Definition}[section]

\newtheorem{theorem}[definition]{Theorem}
\newtheorem{lemma}[definition]{Lemma}
\newtheorem{corollary}[definition]{Corollary}
\newtheorem{conjecture}[definition]{Conjecture}
\newtheorem{remark}[definition]{Remark}
\newtheorem{example}[definition]{Example}
\newtheorem{proposition}[definition]{Proposition}

\newcommand{\N}{\mathbb{N}}
\newcommand{\F}{\mathbb{F}}
\newcommand{\Q}{\mathbb{Q}}
\newcommand{\bH}{\mathbb{H}}

\newcommand{\Gcal}{\mathcal{G}}

\DeclareMathOperator{\cha}{char }

\newcommand{\aut}{\operatorname{aut}}
\newcommand{\og}{\mathsf{O}}
\newcommand{\iso}{\operatorname{iso}}

\newcommand{\spn}{\operatorname{span}}

\newcommand{\dist}{\mathsf{d}}
\newcommand{\diam}{\mathsf{diam}}
\newcommand{\rad}{\mathrm{rad}}

\newcommand{\D}{\mathsf D}
\newcommand{\Dtilde}{\widetilde{\mathsf D}}
\newcommand{\G}{\mathsf G}

\newcommand{\g}{\mathsf g}
\newcommand{\iw}{\mathsf i_W}

\keywords{Graphs, Quadratic forms, Finite Fields, Connectedness, Diameter, Girth, Cycles}
\subjclass{05C12, 05C25, 05C38, 11E04, 12E20}

\emergencystretch=1em

\title{Diameter and girth of Representation Graphs of Quadratic Forms}
\author{Nico Lorenz}

\address{N.~Lorenz, Fakult\"at für Mathematik, Ruhr Universit\"at Bochum, Universit\"atsstra{\ss}e 150, 44780 Bochum, Germany}
\email{nico.lorenz@ruhr-uni-bochum.de}

\author{Marc Christian Zimmermann}

\address{M.C.~Zimmermann, Department Mathematik/Informatik, Abteilung
  Mathematik, Universit\"at zu K\"oln, Weyertal~86--90, 50931 K\"oln,
  Germany}
  \email{marc.christian.zimmermann@gmail.com}
\date{\today}

\begin{document}

\begin{abstract}
    Let $q$ be a non-degenerate quadratic form defined on an $F$ vector space $V$ and $a \in F$.
    We consider the Cayley graph on $V$ with generating set $\{x \in V \mid q(x) = a\}$ and study its diameter and girth.
    In particular, if $F$ is a finite field, we calculate these invariants and the number of cycles of minimal length in these graphs.
\end{abstract}

\maketitle

\section{Introduction}

Quadratic forms are classical objects that appear in many different areas of mathematics.
Originating in number theory, the algebraic theory of quadratic forms initiated by the work of E. Witt in \cite{MR1581519}, has connections to Galois theory, central-simple algebras, and cohomology, among others.
Furthermore, quadratic forms induce combinatorial objects which are regularly studied in the literature and which will be investigated in this article.
Given a quadratic form $q$ on a vector space $V$ over a field $F$ and a fixed element $a \in F$, one can consider the \emph{representation graph} of $q$ with respect to $a$, i.e. the graph with vertex set $V$ in which two different vertices $x, y \in V$ are connected if and only if $q(x - y) = a$.
In other words, we consider the Cayley graphs on $V$ with generating set $V_{q, a} = \{x \in V \setminus \{0\} \mid q(x) = a\}$.
A special emphasis in the literature lies on the \emph{sum of squares form}, i.e. the form defined on $F^n$ which maps a vector $(x_1, \ldots, x_n) \in F^n$ to $x_1^2 + \ldots + x_n^2$ and $a = 1$, since this situation can be interpreted as an analog of the Euclidean space of dimension $n$.
The articles mentioned in the following short survey all focus on the sum of squares form over different fields if not stated otherwise.

For the field of rational numbers $F = \Q$ these graphs are studied in \cite{Chilakamarri_1988} with respect to their connectedness and clique number.
From a combinatorial point of view, representation graphs are of particular interest in the case of a finite field (or even ring).
The spectral properties (also implying connectedness results) of representation graphs over finite fields were obtained in \cite{medrano_myers_stark_terras_1996}.
Formulated in the language of association schemes, generalizations of these results, also for arbitrary non-degenerate quadratic forms, can be found in \cite{bannai_shimabukuro_tanaka_2009}.

The connectedness in the case of finite fields was also derived using combinatorial techniques in a recent paper by M. Krebs in \cite{Krebs_2022}.
This article also deals with clique numbers and includes some results on the chromatic number.
In \cite{lorenz:2023:cir}, the authors reprove the results of \cite{Krebs_2022} about the clique number using quadratic form theory.
Using this algebraic approach, the results could be generalized to the case of representation graphs of arbitrary non-degenerate quadratic forms and further, the number of maximal cliques instead of just their size could be computed.

Similarly the main goal of the present article is to generalize and quantify the results on connectedness.
While we focus on finite fields, some of our results are also valid for infinite fields.
We expand the results on the connectedness by calculating the \emph{diameter}, i.e. the maximal length of minimal paths between two vertices is calculated in almost all cases over finite fields, see \Cref{thm:diameter:finite:fields}.
Similar techniques as for the diameter are used to compute the \emph{girth}, i.e. the length of a shortest cycle, see \Cref{thm:girth}.
Finally, the number of shortest cycles is calculate in \Cref{prop:CyclesLength3} and \Cref{thm:cycles4anisotropic}.

As noted above, our techniques differ from those in the literature in that we rely heavily on results from the algebraic theory of quadratic forms.
This theory provides results that can be used in combinatorial arguments.
Furthermore, the algebraic arguments can be used to reduce the general case of a given problem to a special case in which combinatorial arguments work more easily.
As one of the main tools, we use decompositions of quadratic forms.
Results on low-dimensional forms, which are naturally easier to establish, can then easily be used to cover the arbitrary case.
The final advantage of our approach is that some of our results also hold over infinite fields.

The structure of the paper is as follows. 
In \Cref{sec:prelim}, we recall some basic concepts about graphs, quadratic forms and their representation graphs.
In \Cref{sec:Connectedness}, we deal with connectedness and diameter of representation graphs over finite fields and even for arbitrary fields under some further conditions on the quadratic form.
Finally, \Cref{sec:girth} is devoted to the length and the number of cycles of minimum length.

\section{Preliminaries} \label{sec:prelim}

\subsection{Quadratic Forms}

We briefly recall some basic facts about and notation on quadratic forms that will be used frequently in this article.
We use standard terminology when working with quadratic forms, usually consistent with that used in \cite{MR2427530} and most of the results on quadratic forms we use can be found in \cite[Sections 7, 8, 13]{MR2427530}.
Readers familiar with the theory of quadratic forms may want to skip this section.

Let $F$ be a field and $V$ be a finite dimensional vector space over $F$.
A \emph{quadratic form} on $V$ is a map $q: V \to F$ such that
    \begin{enumerate}
        \item[(QF1)] for all $a\in F$ and $x\in V$ we have $q(ax) = a^2 q(x)$ and
        \item[(QF2)] the map $b_q:V\times V\to F, (x,y) \mapsto q(x + y) - q(x) - q(y)$ is bilinear.
    \end{enumerate}
In particular we have $b_q(x,x) = 2q(x)$.
Throughout the text, when a quadratic form $q$ is present, we refer to $V$ as the underlying vector space of $q$ and as usual in quadratic form theory, we write $\dim(q)$ for $\dim(V)$.

For example, for $a_1, \ldots, a_n \in
F$, a quadratic form on $F^n$ is given by
\[(x_1, \ldots, x_n) \mapsto a_1x_1^2 + \ldots + a_nx_n^2\]
and will be denoted by $\qf{a_1,\ldots, a_n}$.
For $a,b\in F$, the quadratic form $F^2 \to F, (x,y) \mapsto ax^2 + xy + by^2$ is denoted by $[a,b]$.
Orthogonal sums are denoted by $\perp$ and isometries are denoted by $\cong$.

Our focus is on \emph{non-degenerate} quadratic forms, i.e., forms with 
\[\rad(b_q) = \{x \in V \mid \text{ for all $y \in V$: } b_q(x, y) = 0  \} = \{0\}.\]
Note that this convention differs from our main reference \cite{MR2427530}, in which non-degenerate quadratic forms are defined in a slightly more general way (including forms that are often called \emph{semisingular}).
The more general definition used there comes from a geometric approach to quadratic form theory, then a quadratic form is non-degenerate if and only if the underlying quadric is smooth (see \cite[Proposition 22.1]{MR2427530}).
However, since we are focussing on algebraic aspects, we use the more classical algebraic definition of non-degenerate\footnote{Note that the term \emph{non-degenerate} is used inconsistently in the literature. 
Other terms that are regularly used are \emph{non-singular} or \emph{regular} and sometimes, these expressions may also refer to weaker conditions.} quadratic forms. \medskip

\emph{Unless explicitly stated otherwise, all quadratic forms are assumed to be non-degenerate.}\medskip

Extending the usual definition of represented elements 
\[\D(q) = \{q(v) \in F^\ast \mid v \in V\}\]
we further use the definition
\[\Dtilde(q) = \begin{cases}
    \D(q), &\text{if $q$ is anisotropic}\\
    \D(q) \cup \{0\},&\text{if $q$ is isotropic}
\end{cases},\]
where $q$ is \emph{isotropic} if there exists a nonzero vector $v$ with $q(v) =0$, and \emph{anisotropic} otherwise.

Recall that the \emph{hyperbolic plane} $\bH = [0, 0]$ is, up to isometry, the only non-degenerate quadratic form of dimension 2 which is isotropic.\medskip

Since our combinatorial interest focuses on finite structures, we have a particular interest in quadratic forms over finite fields, which are well understood.
We give a complete classification of quadratic forms over finite fields.
For this, recall the definition of the \emph{Artin-Schreier map} $\wp:F \to F, x \mapsto x^2 + x$ for fields of characteristic 2.

\begin{theorem}[{\cite[Chapter 2, Theorems 3.3, 3.8; Chapter 9, 4.5 Theorem]{book:Scharlau}}]  \label{thm:ClassificationQFFIniteFields}
    Let $F$ be a finite field, $n \in \N$ a positive integer and $k = \lfloor \frac n 2 \rfloor$.
    \begin{enumerate}
        \item Let $\cha(F) \neq 2$.
        Then we have $|F^\ast / F^{\ast2}| = 2$.
        Let $\lambda \in F^\ast \setminus F^{\ast2}$ be not a square.
        There are exactly two different quadratic forms up to isometry of dimension $n$.
        If $n$ is odd, they are given by 
        \[k \times \bH \perp \qf1 \quad \text{ and } \quad k \times \bH \perp \qf \lambda.\]
        If $n$ is even, these are given by 
        \[k \times \bH \quad \text{ and } \quad (k - 1) \times \bH \perp \qf {1, -\lambda}.\]
        \item Let $\cha(F) = 2$.
        Then we have $|F / \wp(F)| = 2$.
        Let $\lambda \in F \setminus \wp(F)$.
        If $n$ is odd, there are no quadratic forms of dimension $n$.
        If $n$ is even, there are exactly two different quadratic forms up to isometry of dimension $n$.
        These are given by 
        \[k \times \bH \quad \text{ and } \quad (k - 1) \times \bH \perp [1, \lambda].\]
    \end{enumerate}
\end{theorem}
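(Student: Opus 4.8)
The plan is to reduce the classification to the standard structure theory of quadratic forms (Witt decomposition and cancellation) together with two field-theoretic index computations and the single geometric input that over a finite field every quadratic form of dimension at least $3$ is isotropic. The two index statements are quick. For $\cha(F) \neq 2$, the squaring homomorphism $F^\ast \to F^\ast,\ x \mapsto x^2$ has kernel $\{\pm 1\}$, which has order $2$ since $-1 \neq 1$; hence its image $F^{\ast 2}$ has index $2$. For $\cha(F) = 2$, the Artin–Schreier map $\wp$ is additive (as $\wp(x+y) = \wp(x) + \wp(y)$ in characteristic $2$) with kernel $\{x \mid x(x+1) = 0\} = \{0,1\}$ of order $2$, so $\wp(F)$ has index $2$ in the finite group $(F,+)$.

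The geometric input I would obtain from Chevalley–Warning: a homogeneous polynomial of degree $2$ in $n \geq 3$ variables over a finite field has a nontrivial zero, so every form of dimension $\geq 3$ is isotropic. Feeding this into a Witt decomposition $q \cong q_{\mathrm{an}} \perp (m \times \bH)$ forces the anisotropic part $q_{\mathrm{an}}$ to have dimension at most $2$. In characteristic $2$ I would additionally observe that the polar form $b_q$ is alternating, since $b_q(x,x) = 2q(x) = 0$; a non-degenerate alternating form exists only in even dimension, which is exactly why there are no non-degenerate forms of odd dimension in this case.

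Next I classify the low-dimensional anisotropic pieces. In odd characteristic the $1$-dimensional anisotropic forms are $\qf{1}$ and $\qf{\lambda}$, distinct because $\lambda \notin F^{\ast 2}$, and the unique anisotropic binary form is $\qf{1,-\lambda}$: it is anisotropic precisely because $\lambda$ is a nonsquare, and uniqueness follows from the fact that over a finite field a binary form is determined up to isometry by its discriminant, which anisotropy pins to the nonsquare class. In characteristic $2$, $q_{\mathrm{an}}$ has even dimension $0$ or $2$, and the unique anisotropic binary form is $[1,\lambda]$, anisotropic exactly because $t^2 + t + \lambda = 0$ has no root when $\lambda \notin \wp(F)$. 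Assembling by the parity of $n$ and the bound $\dim q_{\mathrm{an}} \leq 2$ then yields the listed normal forms: for odd $n$ the anisotropic part is $1$-dimensional, giving $k \times \bH \perp \qf{1}$ or $k \times \bH \perp \qf{\lambda}$; for even $n$ it is $0$- or $2$-dimensional, giving $k \times \bH$ or $(k-1) \times \bH \perp \qf{1,-\lambda}$ (respectively $[1,\lambda]$ in characteristic $2$). Distinctness within each pair is immediate from Witt cancellation, since an isometry $k \times \bH \perp \qf{1} \cong k \times \bH \perp \qf{\lambda}$ would force $\qf{1} \cong \qf{\lambda}$.

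I expect the main obstacle to be the uniqueness of the anisotropic binary form, together with the characteristic $2$ subtleties: there diagonalization is unavailable, so one cannot argue through discriminants of diagonal forms and must instead work with the polar form and the $\wp$-classification directly, while also invoking the fact that the binary forms $\qf{1,-\lambda}$ and $[1,\lambda]$ are the respective norm forms of the unique quadratic extension of $F$. The remaining steps—the index computations, Chevalley–Warning, and the final assembly via Witt cancellation—are routine once these building blocks are in place.
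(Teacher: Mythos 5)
The paper does not actually prove this theorem: it is quoted from Scharlau's book with a citation, so there is no internal proof to compare against. Judged on its own merits, your outline follows the standard textbook route, and its skeleton is sound: the two index computations are correct; Chevalley--Warning does give isotropy in dimension $\geq 3$ (in any characteristic, since a quadratic form on $F^n$ is a homogeneous degree-$2$ polynomial); Witt decomposition then caps the anisotropic part at dimension $2$; the alternating-polar-form argument correctly rules out odd dimensions in characteristic $2$ under the paper's convention $\rad(b_q)=\{0\}$; and Witt cancellation settles distinctness of the listed forms.

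The one genuine gap is the step you yourself flag as ``the main obstacle'': uniqueness of the anisotropic binary form. As written, your justification --- that over a finite field a binary form is determined up to isometry by its discriminant --- is circular, because that statement \emph{is} the $n=2$ case of the theorem (for $n=2$ the theorem asserts exactly that there are two classes, $\bH$ and $\qf{1,-\lambda}$ resp.\ $[1,\lambda]$, separated by the discriminant resp.\ Arf invariant). What actually closes the gap is the fact you mention only in passing: every anisotropic binary form is, up to a scalar, the norm form of the unique quadratic extension $E/F$, and the norm map $N_{E/F}\colon E^{\ast}\to F^{\ast}$ is \emph{surjective} for finite fields (its kernel consists of the $(|F|+1)$-st roots of unity, so the image has $|F|-1$ elements), hence every scalar is a similarity factor and can be absorbed. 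Concretely: in odd characteristic, $\qf{a,b}\cong a\qf{1,ab}$ with $ab\equiv-\lambda$ modulo squares by anisotropy, and $a\qf{1,-\lambda}\cong\qf{1,-\lambda}$ by norm surjectivity; in characteristic $2$, $[a,b]\cong a[1,ab]$ with $ab\equiv\lambda \bmod \wp(F)$ by anisotropy, $[1,ab]\cong[1,\lambda]$ via the base change $e_2\mapsto ce_1+e_2$ where $ab=\lambda+\wp(c)$, and the scalar $a$ is again absorbed by norm surjectivity. With that paragraph supplied, your proof is complete and is essentially the argument of the cited reference.
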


Recall that an element $a \in F^\ast$ is called a \emph{similarity factor} of a quadratic $q$ if $aq \cong q$.
The set of similarity factors forms a group and is denoted by $\G(q)$.
A quadratic form is called \emph{round}\footnote{This term was introduced by the German mathematician A. Pfister as a pun: the German word \emph{quadratisch} translates to both \emph{quadratic} and \emph{square}, so that \emph{runde quadratische Form} can be translated to \emph{round square}} if $\D(q) = \G(q)$.
For example, the hyperbolic plane $\bH$ is easily seen to be round with $\D(\bH) = \G(\bH) = F^\ast$.
The classification of quadratic forms over finite fields easily implies the following well-known consequences, which we will recall for the convenience of the reader.

\begin{remark} \label{rem:finite_field_remarks}
    Let $F$ be a finite field and $q$ be a quadratic form over $F$.
    \begin{enumerate}
        \item \label{rem:finite_field_remarks1} The isometry type of $q$ over $F$ can be determined just by its dimension and its determinant resp. Arf invariant, depending on whether $\cha(F) \neq 2$ or $\cha(F) = 2$.
        \item \label{rem:finite_field_remarks2} If $\dim(q) \geq 2$, then $q$ is universal, i.e. $\D(q) = F^\ast$.
        \item \label{rem:finite_field_remarks3} If $\dim(q) = 2$, then $q$ is either hyperbolic or anisotropic.
        \item \label{rem:finite_field_remarks4} 
        If $\dim(q)$ is even, then $q$ is round.
        If $\dim(q)$ is odd, we have $\G(q) = F^{\ast2}$ (the latter equality holds over all fields as is easily seen by considering the determinant).
    \end{enumerate}
\end{remark}

Furthermore, the computation of some numerical invariants for quadratic forms over finite fields are very classical results.
In particular, the size and structure of the \emph{orthogonal group}
\[\og(q) = \{t: V \to V \mid q \text{ is an isometry of $q$} \}\]
of a quadratic form $q$ is well known (see \cite[Sections 3.7.2, 3.8.2]{book:Wilson}).

Over a finite field, it is also known in how many ways a given element is represented by a quadratic form.
We write
\[V_{q, a} = \{ x \in V \ :\ q(x) = a \},\]
where we drop $q$ from the subscript if $q$ is clear from the context.
We care about the cardinality of these sets, in particular for $2$-dimensional forms.
We include a full description for the convenience of the reader.

\begin{proposition}[{\cite[6.26, 6.27, 6.32 Theorem]{book:lidl}}] \label{prop:NumberOfRepresentations}
    Let $q$ be a quadratic form defined on a vector space $V$ over a field $F$ of finite cardinality $|F| = f$.
    Let $a\in F^\ast$.
    The number of vectors $v \in V$ such that $q(v) = a$ resp. $q(v) = 0$ is given as in \Cref{tbl:NUmberOfRepresentations}, where the $+$ sign in the lower right cell applies if and only if $(-1)^m \cdot \det(q) = a$.
    \begin{table}[!ht]
        \renewcommand{\arraystretch}{1.25}
        \begin{tabular}{|c||c|c|}
            \hline
            type of quadratic form & $|V_0|$ & $|V_a|$\\  \hline\hline
            $\dim(q) = 2m$, $q$ hyperbolic & $f^{2m - 1} + f^m - f ^{m - 1}$ & $f^{2m - 1} - f ^{m - 1}$  \\ \hline
            $\dim(q) = 2m$, $q$ not hyperbolic & $f^{2m - 1} - f^m + f^{m - 1}$ & $f^{2m - 1} + f^{m - 1}$  \\ \hline 
            $\dim(q) = 2m + 1$ & $f^{2m}$ & $f^{2m} \pm f^m$  \\ \hline
        \end{tabular}
        \caption{Size of the pre-image under a quadratic form} \label{tbl:NUmberOfRepresentations}
        \end{table}
\end{proposition}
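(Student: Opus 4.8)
The plan is to reduce to a small number of base cases using the classification in \Cref{thm:ClassificationQFFIniteFields} and then to propagate the counts by repeatedly attaching a hyperbolic plane. The starting observation is that an orthogonal decomposition $q = q_1 \perp q_2$ on $V = V_1 \oplus V_2$ yields a partition $V_{q,a} = \bigsqcup_{b \in F} V_{q_1, b} \times V_{q_2, a - b}$, so that, writing $N_a := |V_{q,a}|$, one has the convolution identity $|V_{q_1 \perp q_2, a}| = \sum_{b \in F} |V_{q_1,b}| \cdot |V_{q_2, a-b}|$. All the content of the proposition will flow from iterating this identity against a hyperbolic plane.

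First I would settle the base cases. For the hyperbolic plane one computes directly from $\bH(x,y) = xy$ that $|V_{\bH, 0}| = 2f - 1$ and $|V_{\bH, c}| = f - 1$ for $c \neq 0$. For a one-dimensional form $\qf c$ (which by \Cref{thm:ClassificationQFFIniteFields} occurs only when $\cha(F) \neq 2$) the equation $cx^2 = a$ gives $|V_{\qf c, 0}| = 1$ and $|V_{\qf c, a}| = 1 + \chi(a c^{-1})$ for $a \neq 0$, where $\chi$ is the quadratic character; this is exactly $1 \pm 1$, with the sign governed by whether $ac^{-1}$ is a square. For an anisotropic binary form I would instead invoke roundness (\Cref{rem:finite_field_remarks}): since $\D(q) = \G(q) = F^\ast$, all nonzero values are represented equally often, so from $|V_{q,0}| = 1$ (anisotropy) and the total count $f^2$ one reads off $|V_{q,a}| = (f^2 - 1)/(f - 1) = f + 1$. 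This avoids any explicit Gauss-sum evaluation.

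Next I would derive the recursion. Attaching a hyperbolic plane to a form $q$ of dimension $n$ and using the convolution identity together with $M_0 = 2f-1$, $M_c = f-1$ for the plane and $\sum_{b \in F} N_b = f^n$, a short computation collapses to $|V_{q \perp \bH, a}| = f \cdot N_a + (f - 1) f^n$, valid uniformly in $a$ (including $a = 0$). By \Cref{thm:ClassificationQFFIniteFields} every non-degenerate form is obtained from one of the three seeds above (the plane $\bH$ itself for the hyperbolic even case, $\qf c$ for the odd case, an anisotropic binary form for the non-hyperbolic even case) by attaching copies of $\bH$. Hence an induction on $m$ using this recursion establishes every entry of \Cref{tbl:NUmberOfRepresentations}, each inductive step being a one-line verification that the claimed closed forms satisfy the relation $N'_a = f N_a + (f-1) f^n$.

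The one point requiring care is the sign in the odd-dimensional row. Here I would check that attaching $\bH$ leaves the predicate $(-1)^m \det(q) = a$ (read modulo squares) invariant: the determinant is multiplied by $\det(\bH) = -1$ while $m$ increases by one, so the factor $(-1)^m \det(q)$ is unchanged, which matches the fact that the recursion preserves the sign of $N_a$ relative to the seed $\qf c$. Apart from this bookkeeping, the main—though still routine—obstacle is the anisotropic binary base case, and the roundness argument handles it cleanly.
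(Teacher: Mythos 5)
Your proof is correct, but it takes a genuinely different route from the paper, which does not prove \Cref{prop:NumberOfRepresentations} at all: the result is quoted from Lidl--Niederreiter, where the counts are established by explicit character-sum computations, treated separately in odd and even characteristic. Your argument is self-contained and purely structural: the convolution identity $|V_{q_1\perp q_2,a}| = \sum_{b\in F}|V_{q_1,b}|\cdot|V_{q_2,a-b}|$ for orthogonal sums, the three seeds supplied by \Cref{thm:ClassificationQFFIniteFields} (namely $\bH$, $\qf{c}$, and an anisotropic binary form), and the uniform recursion $N_a' = fN_a + (f-1)f^{\dim q}$ obtained by attaching a hyperbolic plane. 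The details check out: the seed counts are right (in particular the roundness argument via \Cref{rem:finite_field_remarks} gives $|V_{q,a}| = (f^2-1)/(f-1) = f+1$ for anisotropic binary $q$ in both characteristics, neatly avoiding any case split or Gauss sum); every row of \Cref{tbl:NUmberOfRepresentations} satisfies the recursion; and the sign bookkeeping in the odd-dimensional row works because $\det(\bH) = -1$ makes $(-1)^m\det(q)$ invariant modulo squares under $q \mapsto q\perp\bH$, which matches the fact that the recursion preserves the sign of the $\pm f^m$ term inherited from the seed $\qf{c}$. What your approach buys is an elementary, characteristic-uniform induction very much in the spirit of the paper's stated methodology of reducing to low-dimensional forms via decompositions; what the citation buys is brevity. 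The one point to phrase carefully is that the condition $(-1)^m\det(q) = a$ must be read in $F^\ast/F^{\ast2}$, since $\det(q)$ is only well defined up to squares, as you note.
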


Of particular importance for us is the fact that the orthogonal group $\og(q)$ acts transitively on vectors representing the same element; this is a direct consequence of the fundamental Witt Extension Theorem.

\begin{corollary} \label{cor:OGtransitive}
    Let $q$ be a quadratic form defined on a vector space $V$ and let $v,w\in V\setminus\{0\}$ with $q(v) = q(w)$.
    Then there is some $\sigma\in \og(q)$ with $\sigma(v) = w$.
\end{corollary}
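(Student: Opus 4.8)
The plan is to realize the assignment $v \mapsto w$ as an isometry between one-dimensional subspaces and then invoke the Witt Extension Theorem to extend it to all of $V$, exactly as the remark preceding the statement suggests.

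First I would set $c = q(v) = q(w)$ and define the linear map $\tau \colon Fv \to Fw$ by $\tau(v) = w$. Since $v, w \neq 0$, both lines are one-dimensional and $\tau$ is a linear isomorphism. To check that $\tau$ is an isometry of the restricted quadratic spaces, I would use (QF1): for every $\lambda \in F$ we have
\[ q(\tau(\lambda v)) = q(\lambda w) = \lambda^2 q(w) = \lambda^2 q(v) = q(\lambda v), \]
so $\tau$ preserves the values of $q$ on all of $Fv$. The hypothesis $q(v) = q(w)$ is used exactly here, and nowhere is the nonvanishing of $c$ required, so the same computation covers both the isotropic case $c = 0$ and the anisotropic case $c \neq 0$ uniformly.

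Next, since $q$ is non-degenerate by our standing assumption, the Witt Extension Theorem guarantees that the isometry $\tau \colon Fv \to Fw$ of subspaces extends to an isometry $\sigma$ of the whole quadratic space $V$, that is, $\sigma \in \og(q)$. By construction $\sigma(v) = \tau(v) = w$, which is precisely the claim.

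The only point that genuinely requires care is confirming that the form of the Witt Extension Theorem we appeal to applies in the present generality. The delicate case is $\cha(F) = 2$, where the associated bilinear form $b_q$ is alternating and our non-degeneracy condition $\rad(b_q) = \{0\}$ forces $\dim(q)$ to be even; there one must use the extension theorem stated for non-degenerate \emph{quadratic} forms rather than a version phrased only for symmetric bilinear forms. I would therefore cite the version valid over an arbitrary field and applicable to arbitrary (possibly totally isotropic) subspaces, so that the single line $Fv \to Fw$ above can be extended without any separate treatment of the two characteristics or of the cases $c = 0$ and $c \neq 0$.
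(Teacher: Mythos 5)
Your proof is correct and takes essentially the same approach as the paper: both define the isometry $q|_{\spn(v)} \to q|_{\spn(w)}$, $\lambda v \mapsto \lambda w$, and extend it to some $\sigma \in \og(q)$ by Witt's Extension Theorem, using that $\rad(b_q) = \{0\}$. Your extra remarks on characteristic $2$ and on needing the version of the theorem for quadratic (rather than bilinear) forms on possibly totally isotropic subspaces are sound elaborations of points the paper leaves implicit.
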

\begin{proof}
    Since $\rad(b_q) = \{0\}$, we can apply Witt's Extension Theorem \cite[Theorem 8.3]{MR2427530} to extend the isometry $q|_{\spn(v)} \to q|_{\spn(w)}$ defined by $\lambda v \mapsto \lambda w$ to such a $\sigma \in \og(q)$ as desired. 
\end{proof}

\subsection{Graph Theory}

We now recall some standard notation for graphs and provide some basic properties of representation graphs of quadratic forms.

Let $G = (V,E)$ be a (not necessarily finite, but simple) graph, where $V$ is the \emph{vertex set} and $E \subseteq \binom{V}{2}$ the \emph{edge set} of $G$. 
We say that two vertices $u,v \in V$ are \emph{adjacent} in $G$ if $\{u,v\} \in E$. 
A \emph{path of length $n$} between two vertices $u, v \in V$ is a sequence $(v_0 = u, v_1, \ldots, v_n = v)$ of vertices such that $v_i \neq v_j$ for $i \neq j$ and $\{v_{i - 1}, v_{i}\} \in E$ for all $i \in \{1, \ldots, n\}$.
Similarly, a \emph{cycle} of length $n \geq 3$ is a sequence $(v_0, v_1, \ldots, v_{n - 1}, v_0)$ of vertices such that $v_i \neq v_j$ for $i \neq j$ and $\{v_{i - 1}, v_{i}\} \in E$ for all $i \in \{1, \ldots, n\}$.

We define the \emph{distance} $\dist(u,v)$ of two vertices $u, v \in V$ as the length of a shortest $u$-$v$-path in $G$ and $\infty$ if no such path exists, the \emph{diameter} of $G$ by 
\[\diam(G) = \sup\{n\in \N \mid \text{there are } u,v \in V \text{ such that } d(u,v) = n\} \in \N_0 \cup \{\infty\},\]
and the \emph{girth} of $G$ by 
\[\g(G) = \inf\{n\in \N \mid \text{there is a cycle of length $n$ in $G$}\} \in \N_{\geq 3} \cup \{\infty\}.\]

Two graphs $G = (V, E), G' = (V', E')$ are \emph{isomorphic} if and only if there exists a bijective map $\rho: V \rightarrow V'$ such that $\{\rho(u), \rho(v)\} \in E'$ if and only if $ \{u,v\} \in E$, we write $G \cong G'$ in this case.
It is clear that diameter and girth are invariants of the isomorphism class of a graph.
The group 
\[\aut(G) = \{ \sigma: V \to V \ \mid \sigma\text{ is an isomorphism of $G$}\}\] 
is the \emph{automorphism group} of the graph $G$. \medskip

We now introduce a family of graphs for a given quadratic form.
These graphs are the protagonists of this article.

\begin{definition}
	Let $q$ be a quadratic form on an $F$ vector space $V$ and $a\in F$. 
    The \emph{representation graph} $\mathcal G_{q, a} = (V, E)$ is defined by its set of edges
	\[E:=\left\{ \{x,y\} \in \binom{V}{2} \mid q(x - y) = a \right\}.\]
\end{definition}

It is not hard to see that a  representation graph $G_{q, a} = (V, E)$ is the \emph{Cayley graph} $\operatorname{Cay}(V, V_{q, a})$ for the group $(V, +)$ and the symmetric set 
\[V_{q, a} = \{ x \in V \ \mid q(x) = a \},\]
i.e. we have $\{x, y\} \in E$ if and only if $x - y \in V_{q, a}$.

The following lemma is well known.

\begin{lemma} \label{lem:cayley:connected}
    The number of connected components of a Cayley graph $\operatorname{Cay}(V; S)$ is equal to the index $[V:\qf S]$, where $\qf S$ denotes the subgroup generated by the elements of $S$.
    In particular $\operatorname{Cay}(V, S)$ is connected if and only if $S$ generates $V$ as a group.
\end{lemma}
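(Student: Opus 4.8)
The plan is to identify the connected component of each vertex $v$ with the coset $v + \qf S$ of the subgroup generated by $S$, and then count cosets. Since connectivity in a graph is detected by walks (sequences of adjacent vertices that may repeat) rather than by paths, I would first record the standard observation that two vertices lie in the same component if and only if some walk joins them: any walk between two vertices can be shortened to a path by excising the segment between two repeated occurrences of a vertex, so path-connectivity and walk-connectivity coincide. This lets me work with the more flexible notion of walk in both inclusions below, even though the paper's formal definition of a path insists on distinct vertices.

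First I would establish the containment ``component of $x$'' $\subseteq x + \qf S$. If $(v_0 = x, v_1, \ldots, v_n = y)$ is any walk in $\operatorname{Cay}(V, S)$, then by definition of the edge set each consecutive difference $s_i := v_i - v_{i-1}$ lies in $S$, and hence
\[
y - x = \sum_{i=1}^{n} s_i \in \qf S.
\]
Thus every vertex reachable from $x$ differs from $x$ by an element of $\qf S$.

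For the reverse containment I would use that $S$ is symmetric (so $S = -S$), whence $\qf S$ consists exactly of finite sums of elements of $S$. Given $y \in x + \qf S$, write $y - x = s_1 + \cdots + s_n$ with each $s_j \in S$, and set $v_j := x + s_1 + \cdots + s_j$ for $j = 0, \ldots, n$, so that $v_0 = x$ and $v_n = y$. Since $v_j - v_{j-1} = s_j \in S$, consecutive vertices are adjacent, giving a walk from $x$ to $y$; hence $y$ lies in the component of $x$. Combining the two inclusions shows that the connected component of $x$ is precisely the coset $x + \qf S$.

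It then follows that the connected components are exactly the cosets of $\qf S$, so they are in bijection with $V / \qf S$ and their number equals the index $[V : \qf S]$. The final assertion is immediate: the graph is connected exactly when it has a single component, i.e. when $[V : \qf S] = 1$, which happens if and only if $\qf S = V$, that is, $S$ generates $V$. I expect the only genuinely delicate point to be the passage from paths to walks; everything else is a routine coset computation once the symmetry of $S$ is used to realize arbitrary elements of $\qf S$ as sums of generators.
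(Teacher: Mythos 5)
Your proof is correct, but there is nothing in the paper to compare it against: the paper states this lemma as ``well known'' and gives no proof at all. Your argument is the standard one that the authors implicitly invoke --- identifying the component of a vertex $x$ with the coset $x + \qf S$, using the symmetry of $S$ in both inclusions, and counting cosets --- and you handle the two points worth care (the passage between walks and paths, and the fact that symmetry of $S$ lets every element of $\qf S$ be written as a sum of generators) correctly; the only remaining triviality is that if $0 \in S$ (which can occur for the paper's set $V_{q,0}$) one simply discards zero summands when building the walk, since the graph is simple.
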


In the context of representation graphs it is natural to ask how isomorphy of graphs relates to isometry of quadratic forms. 
We first note that it is clear that if $q \cong q'$, then we also have $\Gcal_{q, a} \cong \Gcal_{q', a}$ for all $a \in F$.

We also consider the following extension of isometries.
The \emph{translation} with \emph{translation vector} $v$ is the map 
\[\tau_v: V\to V, x \mapsto x + v.\]
We define the group of \emph{affine isometries} $\iso(q)$ of $q$ as the subgroup of the affine group in which the linear part is an isometry of $q$, i.e. 
\[\iso(q) = \{\tau_v \mid v\in V\} \rtimes \og(q).\]
It is easy to see that every element of $\iso(q)$ is a graph automorphism of $\Gcal_{q, a} = (V, E)$.

Recall that an element $u \in F^\ast$ is called a \emph{similarity factor} for the quadratic form $q$ if we have $uq \cong q$. 
By \cite[Proposition 3.1]{lorenz:2023:cir}, we obtain the basic graph isomorphism $\Gcal_{q, a} \cong \Gcal_{q, \alpha a}$, whenever $\alpha \in F^\ast$ is a similarity factor of $q$.
We will use this fact implicitly throughout the text, particularly for quadratic forms $q$ of dimension $2$.
By \Cref{rem:finite_field_remarks} \ref{rem:finite_field_remarks4}, if $q \cong \bH$ or $q$ is defined over a finite field, then each $\alpha \in F^\ast$ is a similarity factor and so, for all $a \in F^\ast$, we have $\Gcal_{q, a} \cong \Gcal_{q, 1}$ in this case.

\section{Connectedness and diameter} \label{sec:Connectedness}

\subsection{First Examples} \label{sec:first:examples}

We start with some easy examples that exhibit some of the general behavior of representation graphs and illustrate that they range from being completely disconnected to being connected. 
Note that for fixed $q$ the graph whose edge set is the union of the edge sets of all graphs $\Gcal_{q,a}$, as $a$ exhausts $F$, is a complete graph.
This is related to the fact that the edge sets of the graphs $\Gcal_{q,a}$ are the relations of an association scheme on the underlying vector space (see \cite{bannai_shimabukuro_tanaka_2009}).

\subsubsection*{$1$-dimensional forms}
Let $q = \qf{b}$ be a quadratic form over a field $F$. If $ab \not\in F^{\ast 2}$, then the representation graph $\Gcal_{q,a}$ consists only of isolated points.

If $F= \F_{p^m}$ and $ab \in \F_p^{\ast 2}$, then $\Gcal_{q,a}$ is the disjoint union of $p^{m-1}$ cycles of length $p$, therefore it is connected if and only if $m=1$ (which is essentially already covered in \cite[Theorem 1.1]{Krebs_2022})

\subsubsection*{Hyperbolic planes over small fields}
Let $\bH = [0,0]$ be the hyperbolic plane over a field $F$. 
If $F$ is $\F_2$ or $\F_4$ and $a \neq 0$, the representation graph $\Gcal_{\bH,a}$ is not connected.
We illustrate this for $a=1$, where
\[
V_1 = \begin{cases} \{ (1,1) \} & \text{for $\F_2$} \\ \{ (1,1),(\alpha,\alpha+1),(\alpha+1,\alpha) \} & \text{for $\F_4$.} \end{cases}
\]
So for $\F_2$ the graph has 2 connected components, for $\F_4$ it has 4 connected components. 
The connected components of $(0,0)$ are shown in \Cref{fig:F2F4:hyp:plane:cc}.
Note that for both fields $\Gcal_{q,0}$ is connected (which is either a simple calculation or a consequence of the upcoming \Cref{prop:isotropic:diameter}).

For $\F_3$ and $\F_5$ we have
\[
V_1 = \begin{cases} \{ (1,1), (2,2) \} & \text{for $\F_3$} \\ \{ (1,1),(4,4),(2,3),(3,2) \} & \text{for $\F_5$.} \end{cases}
\]
From this we see that for $F = \F_3$ the graph $\Gcal_{\bH,1}$ has $3$ connected components while for $F = \F_5$ the graph $\Gcal_{\bH,1}$ is connected since $V_1$ generates $\F_5^2$ as a group.

Note that in all the examples above, $\Gcal_{\bH,a}\! \cong \Gcal_{\bH,1}$ for $a \neq 0$, since $a$ is a similarity factor of $\bH$.

\begin{figure}[h]

\begin{subfigure}{0.3\textwidth}
\centering
\begin{tikzpicture}[scale=0.7]

    \node at (0,0) {\textbullet} node[below] at (0,0) {\tiny $(0,0)$};
    \node at (0,2) {\textbullet} node[below] at (0,2) {\tiny $(0,1)$};

    \node at (2,0) {\textbullet} node[below] at (2,0) {\tiny$(1,0)$};
    \node at (2,2) {\textbullet} node[below] at (2,2) {\tiny$(1,1)$};

    \draw (0,0) -- (2,2);
\end{tikzpicture}
\caption{$\F_2$} \label{subfig:F2F4:hyp:plane:cc:a}
\end{subfigure}
\begin{subfigure}{0.6\textwidth}
\centering
\begin{tikzpicture}[scale=0.7]

    \node at (0,0) {\textbullet} node[below] at (0,0) {\tiny $(0,0)$};
    \node at (0,2) {\textbullet} node[below] at (0,2) {\tiny $(0,1)$};
    \node at (0,4) {\textbullet} node[below] at (0,4) {\tiny$(0,\alpha)$};
    \node at (0,6) {\textbullet} node[below] at (0,6) {\tiny$(0,\alpha\!+\!1)$};

    \node at (2,0) {\textbullet} node[below] at (2,0) {\tiny$(1,0)$};
    \node at (2,2) {\textbullet} node[below] at (2,2) {\tiny$(1,1)$};
    \node at (2,4) {\textbullet} node[below] at (2,4) {\tiny$(1,\alpha)$};
    \node at (2,6) {\textbullet} node[below] at (2,6) {\tiny$(1,\alpha\!+\!1)$};

    \node at (4,0) {\textbullet} node[below] at (4,0) {\tiny$(\alpha,0)$};
    \node at (4,2) {\textbullet} node[below] at (4,2) {\tiny$(\alpha,1)$};
    \node at (4,4) {\textbullet} node[below] at (4,4) {\tiny$(\alpha,\alpha)$};
    \node at (4,6) {\textbullet} node[below] at (4,6) {\tiny$(\alpha,\alpha\!+\!1)$};

    \node at (6,0) {\textbullet} node[below] at (6,0) {\tiny$(\alpha\!+\!1,0)$};
    \node at (6,2) {\textbullet} node[below] at (6,2) {\tiny$(\alpha\!+\!1,1)$};
    \node at (6,4) {\textbullet} node[below] at (6,4) {\tiny$(\alpha\!+\!1,\alpha)$};
    \node at (6,6) {\textbullet} node[below] at (6,6) {\tiny$(\alpha\!+\!1,\alpha\!+\!1)$};

    \draw (0,0) -- (2,2);
    \draw (0,0) -- (6,4);
    \draw (0,0) -- (4,6);
    \draw (2,2) -- (6,4);
    \draw (2,2) -- (4,6);
    \draw (4,6) -- (6,4);
\end{tikzpicture}

\caption{$\F_4$} \label{subfig:F2F4:hyp:plane:cc:b}
\end{subfigure}

\caption{The connected component of $(0,0)$ in the representation graph $\Gcal_{\bH,1}$ of the hyperbolic plane $\bH$ over (a): $\F_2$ and (b): $\F_4$.} \label{fig:F2F4:hyp:plane:cc}
\end{figure}
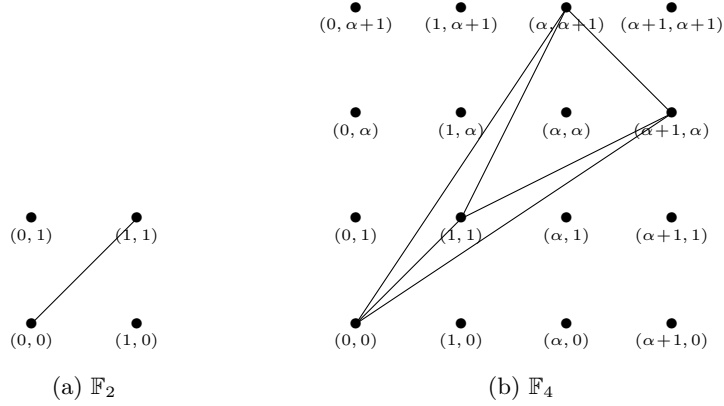

\subsubsection*{An anisotropic form}
An extreme case is the anisotropic form of dimension $2$ over $\F_2$ given by $[1,1]$.
Here it is clear that $\Gcal_{[1,1], 0}$ consists only of isolated points, while $\Gcal_{[1,1], 1}$ is the complete graph on 4 vertices.

\subsection{Finite fields}

Over finite fields we can give a complete classification of those representation graphs of quadratic forms which are connected. 
We will give a list of all exceptions in \Cref{thm:finite:field:Connectedness}.
In fact, we prove a stronger result by computing the diameter of these graphs in \Cref{thm:diameter:finite:fields}, with only one exception.
The exceptional case is that of hyperbolic planes, where we find an upper bound that exceeds the actual value by at most $1$.

\begin{theorem} \label{thm:diameter:finite:fields}
Let $F=\F_{p^m}$ be a finite field and $q$ be a quadratic form over $F$. Then the diameter of $\Gcal_{q,a}$ is as given in \Cref{tbl:diameter}.

\begin{table}[!ht]
    \renewcommand{\arraystretch}{1.25}
        \begin{tabular}{|c|c|c|c|c|}
            \hline
            cond. on $a$ & $\dim(q)$ & cond. on $q$ & cond. on $F = \F_{p^m}$ & $\diam(\Gcal_{q, a})$\\
            \hline \hline
            \multirow{2}{*}{$a = 0$} & \multirow{2}{*}{arbitrary} & \multicolumn{1}{c|}{isotropic} & \multirow{2}{*}{arbitrary} & \multicolumn{1}{c|}{2}\\ \cline{3-3} \cline{5-5}
            & & \multicolumn{1}{c|}{anisotropic} & & \multicolumn{1}{c|}{$\infty$}\\
            \hline
            
            \multirow{10}{*}{$a \neq 0$} & \multirow{2}{*}{$\dim(q) = 1$} & \multicolumn{1}{c|}{$a\det(q) = 1$} & \multicolumn{1}{c|}{$m = 1$} & \multicolumn{1}{c|}{$\frac{p - 1} 2$}\\
            \cline{3-5}
            & & \multicolumn{2}{|c|}{otherwise} & $\infty$ \\
            \cline{2-5}
            & \multirow{5}{*}{$\dim(q) = 2$} & \multirow{2}{*}{$q \cong \bH$} & $|F| \leq 4$ & $\infty$ 
            \\
            \cline{4-5}
            & & & $|F| \geq 5$ & 3 or 4
            \\
            \cline{3-5}
            & & \multirow{3}{*}{$q \not\cong \bH$} & $F = \F_2$ & 1
            \\
            \cline{4-5}
            & & & $F \in \{\F_3, \F_4\}$ & 2\\
            \cline{4-5}
            & & & otherwise & 3
            \\
            \cline{2-5}
            & \multirow{2}{*}{$\dim(q) = 3$} & $\det(q) = -a$ & \multirow{2}{*}{arbitrary} & 2
            \\
            \cline{3-3} \cline{5-5}
            & & $\det(q) \neq -a$ & & 3
            \\
            \cline{2-5}
            & $\dim(q) \geq 4$ & arbitrary & arbitrary & 2
            \\
            \hline
        \end{tabular}
    \caption{Diameter of representation graphs.}
    \label{tbl:diameter}
\end{table}
\end{theorem}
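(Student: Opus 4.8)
The plan is to reduce the entire table to a single quantity attached to each value of $q$. First I would exploit the two symmetry inputs already recorded in \Cref{sec:prelim}. Since the translations $\tau_v$ and all of $\og(q)$ lie in $\iso(q)$ and act as graph automorphisms, $\Gcal_{q,a}$ is vertex-transitive, so its diameter equals the eccentricity of the origin, $\diam(\Gcal_{q,a}) = \sup\{\dist(0,v)\mid v\in V\}$. Moreover, by \Cref{cor:OGtransitive} the group $\og(q)$ acts transitively on each sphere $V_{q,b}$ while fixing $0$, so $\dist(0,v)$ depends only on $b=q(v)$. Writing $\delta(b)$ for this common distance, the problem collapses to computing $\delta(b)$ for every $b\in\Dtilde(q)$ and taking the maximum. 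Before doing so I would normalise $a$: for $q$ even-dimensional or hyperbolic every nonzero scalar is a similarity factor (\Cref{rem:finite_field_remarks}\ref{rem:finite_field_remarks4}), hence $\Gcal_{q,a}\cong\Gcal_{q,1}$ and we may take $a=1$; in odd dimension only squares are similarity factors, which is precisely why the dimension-$3$ row must retain the condition $\det(q)=-a$.

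The heart of the argument is a distance-$2$ criterion. A value $b\neq 0$ satisfies $\delta(b)\le 2$ exactly when some $v$ with $q(v)=b$ can be written as $v=s_1+s_2$ with $q(s_1)=q(s_2)=a$; expanding $q(s_1+s_2)=2a+b_q(s_1,s_2)$ shows this is equivalent to the existence of $s_1,s_2\in V$ with
\[ q(s_1)=q(s_2)=a \quad\text{and}\quad b_q(s_1,s_2)=b-2a, \]
that is, to the embeddability into $q$ of the binary form whose Gram data is prescribed by $a$ and $b$ (the value $b=0$ is handled separately to keep $v\neq 0$). Over a finite field this embeddability is decidable using \Cref{thm:ClassificationQFFIniteFields} together with the representation counts of \Cref{prop:NumberOfRepresentations}: fixing one vector $s_1\in V_{q,a}$, the second condition cuts out an affine hyperplane, and counting the points of $V_{q,a}$ on it reduces to evaluating \Cref{prop:NumberOfRepresentations} for the restriction of $q$ to that hyperplane. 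I expect this to yield $\delta(b)\le 2$ for all admissible $b$ once $\dim(q)\ge 4$, to leave exactly one recalcitrant value in dimension $3$ unless $\det(q)=-a$, and to fail for a positive proportion of values in dimension $2$.

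With the criterion in hand I would assemble the rows in increasing order of difficulty. The case $a=0$ is immediate: for anisotropic $q$ the generating set $V_{q,0}\setminus\{0\}$ is empty, giving isolated vertices and diameter $\infty$, whereas for isotropic $q$ a hyperbolic subform $\bH$ of $q$ lets one write every vector as a sum of two nonzero isotropic vectors, so $\delta(b)\le 2$ for all $b$ and the diameter is $2$ (not $1$, since the graph is not complete). The one-dimensional rows are a direct computation: $V_{q,a}$ is either empty or a pair $\{\pm x_0\}$, so $\Gcal_{q,a}$ is a disjoint union of $p^{m-1}$ cycles of length $p$, connected with diameter $\tfrac{p-1}{2}$ exactly when $m=1$ and $a\det(q)\in F^{\ast2}$ (equivalently, after normalisation, $a\det(q)=1$), and $\infty$ otherwise. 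For $\dim(q)\ge 4$ the criterion gives diameter $2$ throughout, and the dimension-$3$ rows follow by isolating the single bad value and checking that $\det(q)=-a$ governs its disappearance, the matching lower bound $\delta(b)>2$ coming from exhibiting a $b$ whose associated binary form provably does not embed.

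The main obstacle is dimension $2$, where the generating set is only a one-parameter family and the counting leaves little slack. For the \emph{anisotropic} binary forms the small fields $\F_2,\F_3,\F_4$ must be treated by hand (for instance $\Gcal_{[1,1],1}\cong K_4$ over $\F_2$), after which a uniform counting argument should give diameter $3$. The genuinely hard entry is the \emph{hyperbolic plane} over $|F|\ge 5$: here I would establish connectivity via \Cref{lem:cayley:connected}, prove the upper bound $\diam\le 4$ by an explicit four-step reachability argument, and prove $\diam\ge 3$ by producing a value $b$ with $\delta(b)>2$. Closing the gap between $3$ and $4$ is exactly where the method runs out of room, since the two-dimensional geometry admits too few length-two and length-three configurations to pin down the precise value uniformly in $F$, which is why this single entry is left as ``$3$ or $4$''.
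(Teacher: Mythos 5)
Your global architecture is essentially the paper's: reduction to distances from the origin via translations and Witt extension (\Cref{lem:symmetryreduction}, \Cref{cor:OGtransitive}), normalization of $a$ by similarity factors, a case split by dimension, counting in dimension $2$, an explicit generating-set argument for $\bH$, and the final hyperbolic value left as ``3 or 4''. Your one organizational novelty --- deciding $\dist(0,v)\le 2$ by asking whether the Gram data $q(s_1)=q(s_2)=a$, $b_q(s_1,s_2)=q(v)-2a$ embeds into $q$, settled by counting representations on the hyperplane $b_q(s_1,\cdot)=q(v)-2a$ --- is a legitimate repackaging of what the paper does with explicit orthogonal decompositions in \Cref{lem:diameter:dim3} and \Cref{lem:diameter:dim4}, and it does recover the dichotomy on $\det(q)=-a$ in dimension $3$ (the recalcitrant value being $b=0$).

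There is, however, a genuine gap in dimension $3$: your plan produces only one of the two required bounds. When $\det(q)\neq -a$ you show $\delta(0)>2$, but the table asserts that the diameter \emph{equals} $3$, which additionally requires $\dist(0,v)\le 3$ for isotropic $v$; your distance-$2$ criterion says nothing about distance $3$, and no step of the proposal supplies such paths, so a priori the diameter could be larger or even infinite. The paper obtains this upper bound from \Cref{prop:hyp:plane:connected}: every isotropic vector lies in a hyperbolic subplane (\Cref{lem:VectorsInHyperbolicSpaces}) and, for $|F|\ge 5$, is within distance $3$ of the origin \emph{inside} that subplane. Crucially, this route fails outright for $F=\F_3$, where $\Gcal_{\bH,1}$ is disconnected and no in-plane path exists, which is why the paper must settle the single case $q=\bH\perp\qf{2}$, $a=1$ over $\F_3$ by the explicit relation
\[(1, 0, 0) = (2, 2, 0) + (1, 2, 1) + (1, 2, 2) \in V_1 + V_1 + V_1,\]
a case your proposal cannot see at all. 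The repair is not difficult --- for instance, any isotropic $v$ has a neighbour $u=v-s$ with $s\in V_a$ and $q(u)=a-b_q(v,s)\neq 0$ (replace $s$ by $-s$ if needed, using $\cha(F)\neq 2$ in dimension $3$), whence $\dist(0,v)\le 1+\delta(q(u))\le 3$ --- but as written the dimension-$3$ row is unproved. A smaller instance of the same issue: in the anisotropic binary case your ``uniform counting argument'', like the paper's \Cref{lem:DiamLowerBound}, has no slack at $|F|=5$ (the sets $V_a$ and $V_a+V_a$ can already cover all $25$ vertices by cardinality alone), so the lower bound for the ``otherwise: 3'' entry needs a separate check there, e.g.\ via the overlap $V_a\cap(V_a+V_a)\neq\emptyset$ coming from triangles.
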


\begin{proof}
    The proof of the theorem is postponed to the following sections.
    Here we systematically collect where to find which part of the argument.

    First, the case $a = 0$ is treated in \Cref{prop:isotropic:diameter}, so from here on let $a \neq 0$ and we will proceed by dimension.
    Forms of dimension $1$ have already been considered in \Cref{sec:first:examples}.
    
    In the case of $2$-dimensional forms we distinguish between isotropic such forms, i.e. hyperbolic planes, and anisotropic such forms.
    For hyperbolic planes we will derive the upper bound $\diam(\Gcal_{\bH,a}) \leq 4$ in \Cref{prop:hyp:plane:connected} and the lower bound $\diam(\Gcal_{\bH,a}) \geq 3$ in \Cref{lem:Va+VaCapV0trivial}.
    For the anisotropic case we combine the explicit cases in \Cref{sec:first:examples} with the general bounds in \Cref{lem:DiamLowerBound} and \Cref{lem:diamLEQ3}.
    
    Forms of dimension $3$ are treated in \Cref{lem:diameter:dim3}, except for the case of the form $q = \bH\perp \qf{2}$, $a = 1$ over $F =\F_3$.
        But with \Cref{lem:diameter:dim3} and \Cref{lem:symmetryreduction} \ref{lem:symmetryreduction:3}, this case follows from the equality
        \[(1, 0, 0) = (2, 2, 0) + (1, 2, 1) + (1, 2, 2) \in V_1 + V_1 + V_1.\]
     Finally forms of dimension at least $4$ are considered in \Cref{lem:diameter:dim4}.    
\end{proof}

We used a computer algebra system to explicitly compute the diameter of representation graphs of the hyperbolic plane over finite fields with up to 2000 elements.
These computations suggest that the diameter of a representation graph of a hyperbolic plane is $4$ only for very small fields and is $3$ once the field has at least $11$ elements.
Unfortunately, we were unable to find any integer $f$ for which we could prove that $\diam(\Gcal_{\bH,a}) = 3$ for all finite fields with $|F| \geq f$.
So we offer only the following conjecture.

\begin{conjecture}
    Let $F$ be a finite field with $f$ elements. 
    For $a \in F^\ast$ we have 
    \[\diam(\Gcal_{\bH, a}) = \begin{cases}
        4, &\text{ if $f \leq 9$}\\
        3, &\text{ if $f \geq 11$}
    \end{cases}\]
\end{conjecture}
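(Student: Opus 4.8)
The plan is to exploit the roundness of $\bH$ over finite fields (\Cref{rem:finite_field_remarks} \ref{rem:finite_field_remarks4}) to reduce to $a=1$, and the transitivity of the translation subgroup of $\iso(\bH)$ to write $\diam(\Gcal_{\bH,1}) = \max_{w}\dist(0,w)$. Since the two-sided bound $3 \le \diam(\Gcal_{\bH,1}) \le 4$ for $|F|\ge 5$ is already available from \Cref{prop:hyp:plane:connected} and \Cref{lem:Va+VaCapV0trivial}, everything reduces to showing that every $w\in F^2$ is a sum of at most three elements of $V_1$. Writing $\bH=[0,0]$ so that $q(s,t)=st$ and $V_1=\{(t,t^{-1}):t\in F^\ast\}$, for a target $(x,y)$ with $xy\neq 0$ this means exhibiting $s,u,v\in F^\ast$ with
\[ s+u+v = x, \qquad s^{-1}+u^{-1}+v^{-1} = y, \]
the boundary lines $x=0$ and $y=0$ (interchanged by the isometry $(s,t)\mapsto(t,s)$, and containing the trivially-placed origin) being handled separately by an easier, lower-dimensional argument.

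The main step is geometric. Clearing denominators turns the system into $s+u+v=x$ and $su+sv+uv=y\,suv$; eliminating $v=x-s-u$ yields a plane cubic $D_{x,y}$ in $(s,u)$ whose homogeneous leading form is $-y\,s\,u\,(s+u)$. As this is a product of three pairwise non-proportional linear forms, the projective closure of $D_{x,y}$ is smooth at infinity and meets the line at infinity in the three $F$-rational points $[1:0:0]$, $[0:1:0]$, $[1:-1:0]$. For generic $(x,y)$ the cubic is smooth, hence a genus-one curve with a rational point, and the Hasse--Weil bound gives $\#D_{x,y}(F)\ge f+1-2\sqrt f$. A bounded number of these points must be discarded as they do not produce $s,u,v\in F^\ast$ (the three at infinity, and the finitely many with $s$, $u$ or $v=x-s-u$ equal to $0$); one checks that at most nine points are excluded, so a genuine representation exists once $f+1-2\sqrt f>9$, i.e.\ $f\ge 17$. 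This settles $\diam(\Gcal_{\bH,1})=3$ for all $f\ge 17$ with $D_{x,y}$ smooth, after which the finitely many remaining fields $f\in\{11,13,16\}$ are confirmed directly, and the explicit small fields $f\in\{5,7,8,9\}$ are checked to contain a target at distance $4$.

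The hard part, and the reason the statement is offered only as a conjecture, is the control of the degenerate members of the family $\{D_{x,y}\}$. The singular fibres form a discriminant curve in the $(x,y)$-plane; when such a cubic is irreducible its geometric genus only drops and the point count improves, but when it becomes geometrically reducible — splitting as a line plus a conic, or as three lines not all defined over $F$ — the Hasse--Weil main term can vanish, and one must rule out, uniformly for every $f\ge 11$, a bad target whose few rational points are entirely absorbed by the excluded coordinate locus. Equivalently, in the character-sum formulation the number of representations is
\[ \tfrac{1}{f^2}\!\left((f-1)^3 + \textstyle\sum_{c\in F^\ast}\mathrm{Kl}(c)^3\,\mathrm{Kl}(xy\,c)\right) + O(1), \qquad \mathrm{Kl}(c)=\textstyle\sum_{t\in F^\ast}\psi(t+c/t), \]
and the Weil bound $|\mathrm{Kl}(c)|\le 2\sqrt f$ applied termwise gives only an $O(f)$ error, the same size as the main term $f$; the genuine square-root cancellation in this twisted fourth moment of Kloosterman sums (available in principle from the $\mathrm{SL}_2$-monodromy of the Kloosterman sheaf via Deligne's equidistribution theorem) is exactly what would close the gap. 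Making that cancellation effective all the way down to $f=11$, and matching it against the degenerate fibres, is the principal obstacle I would expect to face.
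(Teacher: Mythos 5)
First, a point of order: the paper does not prove this statement at all --- it is explicitly offered as a \emph{conjecture}, supported only by machine computation for fields with at most $2000$ elements, and the authors state outright that they could not find any $f_0$ for which $\diam(\Gcal_{\bH,a})=3$ is provable for all $|F|\geq f_0$. So there is no proof in the paper to compare yours against; the only question is whether your proposal settles the open problem. It does not, and you say so yourself. Your reduction is correct and uses exactly the ingredients the paper supplies: roundness reduces to $a=1$, translation invariance reduces to distances from the origin, and \Cref{prop:hyp:plane:connected} together with \Cref{lem:Va+VaCapV0trivial} pins the diameter into $\{3,4\}$ for $|F|\geq 5$, with the isotropic targets already at distance $\leq 3$; so everything hinges on representing each $(x,y)$ with $xy\neq 0$ as a sum of three elements of $V_1$. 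Your smooth-fibre analysis of the cubic $D_{x,y}$ is also sound: the three rational points at infinity are smooth, only the points $(0,0)$, $(x,0)$, $(0,x)$ and the three at infinity need discarding (note that $s=0$, $u=0$, $s+u=x$ are never components of $D_{x,y}$), and Hasse--Weil then produces a valid triple once $f+1-2\sqrt f$ exceeds the number of excluded points, i.e.\ for $f\geq 17$ \emph{when the fibre is smooth}.

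The genuine gap is the one you flag but do not close, and it is not a minor loose end. The singular fibres form a discriminant curve in the $(x,y)$-plane, which for each $f$ can carry on the order of $f$ rational targets --- not a bounded exceptional set --- and every one of those targets must still be represented. Singular but geometrically irreducible fibres, and fibres containing an $F$-rational line, still have roughly $f$ usable points; the fatal case is a fibre splitting into three lines conjugate over an extension, which has at most three rational points, all of which could lie in the excluded locus. Nothing in your argument shows this never happens for $f\geq 11$ (nor even for $f\geq 17$). Equivalently, in your character-sum formulation, the required square-root cancellation in $\sum_{c\in F^\ast}\mathrm{Kl}(c)^3\,\mathrm{Kl}(xyc)$ is invoked as ``available in principle'' from the monodromy of Kloosterman sheaves but is neither proved nor made effective; as you note, termwise Weil bounds give an error of the same order $O(f)$ as the main term, so no unconditional conclusion follows for any $f$. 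Until either the reducibility locus of the family $\{D_{x,y}\}$ is analyzed explicitly (a finite resultant computation that would likely confine the bad fibres to a short list) or the moment estimate is made effective down to $f=11$, the statement remains exactly what the paper calls it: a conjecture. What you do have, if written out carefully, is a genuine partial result going beyond the paper --- representability, hence $\dist(0,w)\leq 3$, for all targets off an explicit discriminant curve whenever $f\geq 17$ --- but not a proof of the conjecture.
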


The explicit information about the diameter of the representation graphs of quadratic forms over finite fields in \Cref{thm:diameter:finite:fields} condenses into the following result about the connectedness of such graphs.

\begin{theorem}\label{thm:finite:field:Connectedness}
    Let $q$ be a non-degenerate quadratic form over a finite field $F$ of dimension at least 1.
    Then the graph $\Gcal_{q,a}$ is connected, except in the following cases
    \begin{enumerate}
        \item $a = 0$ and $q$ is anisotropic over $F$, or 
        \item $\dim(q) = 1$ but $a \not\in \D(q)$ or $F \not\cong \F_p$ for $p$ prime, or
        \item $a \in F^\ast$, $F \in \{ \F_2,\F_3,\F_4\}$ and $q = \bH$.
    \end{enumerate}
\end{theorem}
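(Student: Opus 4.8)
The plan is to obtain the theorem as a direct corollary of the diameter computation in \Cref{thm:diameter:finite:fields}. Since $F$ is finite, the vertex set $V$ of $\Gcal_{q,a}$ is finite, so $\Gcal_{q,a}$ is connected if and only if its diameter is finite, i.e.\ if and only if the relevant entry of \Cref{tbl:diameter} is different from $\infty$. Thus the whole task reduces to reading off from the table precisely which configurations $(q,a,F)$ produce the value $\infty$ and checking that they are exactly the three families (a)--(c) listed in the statement. (A self-contained alternative would be to invoke \Cref{lem:cayley:connected} and verify directly that $V_{q,a}$ generates $V$ outside these cases, but given the table this only duplicates work.)

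First I would dispose of $a = 0$. The table shows that for $a = 0$ the diameter is $2$ when $q$ is isotropic and $\infty$ when $q$ is anisotropic; hence among the $a=0$ graphs the disconnected ones are exactly those with $q$ anisotropic, which is case (a). Note that this branch already absorbs the sub-case $\dim(q)=1$, $a=0$, since a one-dimensional non-degenerate form $\qf b$ is always anisotropic. For $a \in F^\ast$ I would then scan the table by dimension and observe that every entry is finite except in two rows: for $\dim(q) = 2$ with $q \not\cong \bH$ the values are $1$, $2$, $3$; for $\dim(q) = 3$ they are $2$, $3$; and for $\dim(q) \geq 4$ the value is $2$. So none of these rows contributes a disconnected graph, and the only sources of the value $\infty$ are the row $\dim(q) = 1$ and the hyperbolic row $\dim(q)=2$, $q \cong \bH$.

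For $\dim(q) = 1$, say $q = \qf b$, the table gives a finite diameter $\tfrac{p-1}{2}$ precisely when $a\det(q)$ is a square and $m = 1$, and $\infty$ otherwise. Here I would make explicit that, because a one-dimensional form has similarity group exactly $F^{\ast 2}$ by \Cref{rem:finite_field_remarks} \ref{rem:finite_field_remarks4}, the normalization ``$a\det(q)=1$'' in the table is to be read modulo squares and is equivalent to $a \in \D(q) = bF^{\ast 2}$. Taking the logical negation, the disconnected one-dimensional graphs are exactly those with $a \notin \D(q)$ or $m \neq 1$ (that is, $F \not\cong \F_p$), which is case (b). Finally, for $q \cong \bH$ the table gives diameter $\infty$ iff $|F| \leq 4$, i.e.\ $F \in \{\F_2,\F_3,\F_4\}$, and a finite value ($3$ or $4$) otherwise; this is case (c). Assembling the three lists shows that $\Gcal_{q,a}$ fails to be connected precisely in cases (a)--(c).

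The argument is essentially bookkeeping once \Cref{thm:diameter:finite:fields} is granted, so there is no serious obstacle. The only points demanding care are organizational: in the one-dimensional case one must read ``$a\det(q)=1$'' up to squares and match it with the membership condition $a \in \D(q)$, and one must remember to fold the sub-case $a=0$, $\dim(q)=1$ into the anisotropic branch (a) rather than into (b). Everything else is a routine case-by-case verification that the finite-diameter entries of the table exhaust all configurations not appearing in (a)--(c).
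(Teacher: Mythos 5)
Your proposal is correct and takes exactly the paper's route: the paper gives no separate proof of \Cref{thm:finite:field:Connectedness}, presenting it as the information of \Cref{thm:diameter:finite:fields} (Table~\ref{tbl:diameter}) ``condensed'' into a connectedness statement, which is precisely your reading-off of the infinite-diameter entries, using that a finite graph is connected if and only if its diameter is finite. Your extra bookkeeping---interpreting the table's normalization $a\det(q)=1$ modulo squares as $a \in \D(q)$ via \Cref{rem:finite_field_remarks}~\ref{rem:finite_field_remarks4}, and folding the sub-case $a=0$, $\dim(q)=1$ into the anisotropic branch (a)---is sound and consistent with the paper.
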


\subsection{Structural properties of the diameter}
As a preparation for the proofs of the above theorems, we collect some structural information about the diameter and connectedness of representation graphs, where possible over arbitrary fields.

The action of the isometry group of a quadratic form on the vertex set of a representation graph, which is just the underlying vector space of the quadratic form, allows to restrict adjacency-related concepts to prototypical situations.
In particular, it is sufficient to consider distances and paths of vertices to the origin.

\begin{lemma}\label{lem:symmetryreduction}
    Let $F$ be a field and $q$ be a quadratic form defined on the vector space $V$ and $a\in F$. Let $\iso(q)$ be the isometry group of $q$ acting on $V$.
    \begin{enumerate}
        \item \label{lem:symmetryreduction:1} The group $\iso(q)$ acts on the set of paths in $\Gcal_{q,a}$ in the natural way, preserving lengths, i.e. if $P$ is a $u$-$w$-path of length $\ell$ in $\Gcal_{q,a}$, then $\sigma(P)$ is a $\sigma(u)$-$\sigma(w)$-path of length $\ell$.
        \item \label{lem:symmetryreduction:2} Let $v_1, v_2, w_1, w_2\in V$ such that $w_1 - v_1 \neq 0 \neq w_2 - v_2$ and $q(w_1 - v_1) = q(w_2 - v_2)$.
        Then $\dist(v_1, w_1) = \dist(v_2, w_2)$ in $\Gcal_{q, a}$.
        \item \label{lem:symmetryreduction:3} For all $b\in \Dtilde(q)$, let $v_b\in V \setminus \{0\}$ such that $q(v_b) = b$. 
        We then have 
        \[\diam(\Gcal_{q, a}) = \sup\{\dist(0, v_b)\mid b\in \Dtilde(q)\}.\]
    \end{enumerate} 
\end{lemma}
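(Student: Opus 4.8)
The plan is to handle the three parts in order: (a) supplies the group action, (b) uses it to reduce any distance computation to a canonical representative, and (c) is then a bookkeeping argument built on (b). For (a), I would simply invoke the fact, already noted in the text, that every $\sigma \in \iso(q)$ is a graph automorphism of $\Gcal_{q,a}$. Since such a $\sigma$ is a bijection of $V$ preserving adjacency, applying it coordinate-wise to a path $P = (v_0, v_1, \ldots, v_\ell)$ produces the sequence $(\sigma(v_0), \ldots, \sigma(v_\ell))$ whose entries remain pairwise distinct (injectivity) and whose consecutive pairs remain edges (adjacency preservation). Thus $\sigma(P)$ is again a path of the same length $\ell$, now running from $\sigma(u) = \sigma(v_0)$ to $\sigma(w) = \sigma(v_\ell)$; no computation is needed.

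For (b), write $x = w_1 - v_1$ and $y = w_2 - v_2$, so that $x, y \neq 0$ and $q(x) = q(y)$ by hypothesis. By \Cref{cor:OGtransitive} there is some $\rho \in \og(q)$ with $\rho(x) = y$, and I set $\sigma = \tau_{v_2 - \rho(v_1)} \circ \rho$, an element of $\iso(q)$ with $\sigma(z) = \rho(z) + v_2 - \rho(v_1)$. A direct substitution gives $\sigma(v_1) = v_2$ and $\sigma(w_1) = \rho(w_1 - v_1) + v_2 = y + v_2 = w_2$. Because $\sigma$ is a graph automorphism, part (a) turns any $v_1$-$w_1$-path into a $v_2$-$w_2$-path of equal length, and applying the inverse automorphism $\sigma^{-1} \in \iso(q)$ gives the reverse assignment; together these form a length-preserving bijection between the two path sets. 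Hence shortest paths on the two sides have equal length and one side is empty exactly when the other is, so $\dist(v_1, w_1) = \dist(v_2, w_2)$, with the convention that this also covers the value $\infty$.

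For (c), I would first reduce every distance to one based at the origin: for distinct $u, w \in V$, applying (b) to the quadruple $(v_1, w_1, v_2, w_2) = (0, w - u, u, w)$, whose two difference vectors are both equal to $w - u \neq 0$, yields $\dist(u, w) = \dist(0, w - u)$. Taking the supremum over all pairs $(u,w)$ therefore gives $\diam(\Gcal_{q,a}) = \sup\{\dist(0, z) \mid z \in V \setminus \{0\}\}$, the trivial contributions $\dist(u,u) = 0$ being absorbed. Applying (b) once more shows that $\dist(0, z)$ depends on $z \neq 0$ only through the value $q(z)$; and the set of attained values $\{q(z) \mid z \in V \setminus \{0\}\}$ is precisely $\Dtilde(q)$, equal to $\D(q)$ in the anisotropic case and to $\D(q) \cup \{0\}$ in the isotropic case. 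Grouping the nonzero $z$ by their common value $b = q(z) \in \Dtilde(q)$ and using the chosen representatives $v_b$, the supremum collapses to $\sup\{\dist(0, v_b) \mid b \in \Dtilde(q)\}$, which is the claimed formula.

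I do not anticipate any serious obstacle: the only genuine input is the transitivity statement \Cref{cor:OGtransitive} feeding the construction of $\sigma$ in (b). The points requiring care are purely organizational — verifying $\sigma \in \iso(q)$ and the two evaluations $\sigma(v_1) = v_2$, $\sigma(w_1) = w_2$; identifying the value set $\{q(z) \mid z \neq 0\}$ with $\Dtilde(q)$ in (c); and consistently admitting $\infty$ into all the suprema so that disconnected representation graphs are handled by the same argument.
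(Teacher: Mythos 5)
Your proof is correct and takes essentially the same approach as the paper: part (a) via the graph-automorphism property of $\iso(q)$, part (b) via transitivity of $\iso(q)$ on pairs with prescribed nonzero difference value (built from \Cref{cor:OGtransitive} composed with a translation), and part (c) by translation invariance plus grouping vectors by their value under $q$. You merely spell out details the paper leaves implicit, such as the explicit construction of $\sigma = \tau_{v_2-\rho(v_1)}\circ\rho$ and the identification $\{q(z)\mid z\in V\setminus\{0\}\}=\Dtilde(q)$.
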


\begin{proof}
    \ref{lem:symmetryreduction:1} follows from elementary considerations.
    Then \ref{lem:symmetryreduction:2} follows from \ref{lem:symmetryreduction:1}, since $\iso(q)$ acts transitively on 
    \[
        \left\{ (u,w) \in V \times V \mid q(u-w) = b \right\}
    \]
    for any $b \in F$.
    Finally for \ref{lem:symmetryreduction:3} we observe that by translation invariance, we have 
    \[\diam(\Gcal_{q, a}) = \sup\{\dist(0, v)\mid v\in V\}.\]
    By \ref{lem:symmetryreduction:2} we have $\dist(0, v) = \dist(0, w)$ if $q(v) = q(w)$ and $v \neq0 \neq w$, so we can take the supremum over $\{v_b\mid b\in \Dtilde(q)\}$.
\end{proof}
In the remainder we will often reduce the study of arbitrary $u$-$w$-paths to such paths starting at the origin without explicitly referring to this lemma.

Next, we will show that for fixed $a$ the diameter of $\Gcal_{q,a}$ of an isotropic quadratic form $q$ can be bounded by the diameter of $\Gcal_{\bH,a}$.
This follows from the fact that every vector is contained in a subform isometric to the hyperbolic plane.
While this is not difficult to see we include an argument for the convenience of the reader, as we are not aware of a direct reference. 

\begin{lemma}\label{lem:VectorsInHyperbolicSpaces}
    Let $q$ be an isotropic quadratic form defined on a vector space $V$ over a field $F$ and $v\in V$.
    Then there exists a $2$-dimensional subspace $W\subseteq V$ with $v\in W$ and $q|_W\cong\bH$.
\end{lemma}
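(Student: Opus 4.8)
The plan is to reduce the statement to the existence of a single isotropic vector that is not orthogonal to $v$, and then to check that the plane it spans together with $v$ is hyperbolic. The key claim will be: for every $v\neq 0$ there is a $w\in V$ with $q(w)=0$ and $b_q(v,w)\neq 0$. Granting this, set $W=\spn(v,w)$. First, $v$ and $w$ are linearly independent: if one were a scalar multiple of the other, then since $w\neq 0$ (as $b_q(v,w)\neq 0$) and $q(w)=0$, expanding $b_q(v,w)$ on the dependent pair and using $b_q(x,x)=2q(x)$ forces $b_q(v,w)=0$, a contradiction. Second, in the basis $(v,w)$ the Gram matrix of $b_q|_W$ is $\left(\begin{smallmatrix} 2q(v) & b_q(v,w)\\ b_q(v,w) & 0\end{smallmatrix}\right)$, whose determinant is $-b_q(v,w)^2\neq 0$, so $q|_W$ is non-degenerate; and $q|_W$ is isotropic since $w\in W\setminus\{0\}$ with $q(w)=0$. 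By the uniqueness of the non-degenerate isotropic binary form recalled earlier, $q|_W\cong\bH$. The case $v=0$ is immediate once we know a hyperbolic plane exists at all, which follows from the same construction applied to any nonzero isotropic vector of $q$.

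The heart of the argument is the key claim, which I would derive from the fact that the isotropic vectors of $q$ span $V$. Indeed, granting this spanning statement, for $v\neq 0$ we have $v\notin\rad(b_q)=\{0\}$, hence $v^\perp$ is a proper subspace of $V$; as the isotropic vectors span $V$, they cannot all lie in $v^\perp$, so some isotropic $w$ satisfies $b_q(v,w)\neq 0$, which is exactly the claim.

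To prove that the isotropic vectors span $V$, I would first produce a hyperbolic plane $H=\spn(w_0,w_1)$ with $q(w_0)=q(w_1)=0$ and $b_q(w_0,w_1)=1$: starting from an isotropic $w_0\neq 0$ (which exists as $q$ is isotropic) and some $u$ with $b_q(w_0,u)=1$ (which exists as $\rad(b_q)=\{0\}$), the vector $w_1=u-q(u)\,w_0$ is again isotropic and still satisfies $b_q(w_0,w_1)=1$. Since $H$ is non-degenerate, $V=H\perp H^\perp$. Writing an arbitrary $v_0=h+h'$ with $h\in H$ and $h'\in H^\perp$, I would note that $h\in\spn(w_0,w_1)$ already lies in the span of isotropic vectors, while $h'$ can be corrected to the isotropic vector $\xi=h'+w_0-q(h')\,w_1$ (here $b_q(h',w_i)=0$ gives $q(\xi)=q(h')-q(h')=0$); then $h'=\xi-w_0+q(h')\,w_1$ exhibits $h'$, and hence $v_0$, as a combination of isotropic vectors.

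The main obstacle I anticipate is keeping every step valid in characteristic $2$, where $b_q(x,x)=0$ identically and $1$-dimensional restrictions are degenerate, so the familiar reduction splitting off $\spn(v)$ orthogonally from $v^\perp$ is unavailable. The construction above is designed to sidestep this: it never splits $\spn(v)$ off orthogonally and only ever uses the identities $q(x+y)=q(x)+q(y)+b_q(x,y)$ and $b_q(x,x)=2q(x)$, so the formulas for $w_1$ and $\xi$ and the Gram-determinant computation are all characteristic-independent. The one genuinely global input is that $q$ is isotropic, which is precisely what guarantees the hyperbolic plane $H$ on which the whole argument rests.
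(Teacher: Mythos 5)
Your proof is correct, but it follows a genuinely different route from the paper's. The paper transports a hyperbolic plane to $v$: isotropy gives some subspace $W'$ with $q|_{W'} \cong \bH$; since $\bH$ is universal, there is $w \in W'$ with $q(w) = q(v)$; Witt's Extension Theorem then yields $\sigma \in \og(q)$ with $\sigma(w) = v$, and $W = \sigma(W')$ is the desired plane. You instead build the plane through $v$ directly: you prove that the isotropic vectors span $V$ (via a hyperbolic pair $(w_0, w_1)$ and the correction $\xi = h' + w_0 - q(h')\,w_1$ for the $H^\perp$-component), conclude from $\rad(b_q) = \{0\}$ that some isotropic $w$ satisfies $b_q(v, w) \neq 0$, and verify that $W = \spn(v, w)$ is $2$-dimensional, non-degenerate (Gram determinant $-b_q(v,w)^2 \neq 0$) and isotropic, hence isometric to $\bH$ by the uniqueness of the non-degenerate isotropic binary form recalled in \Cref{sec:prelim}. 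All of your computations are characteristic-free and check out, including the isotropy of $w_1 = u - q(u)\,w_0$ and of $\xi$, the linear-independence argument, and the treatment of $v = 0$. The trade-off: the paper's argument is two lines because it reuses machinery already needed for \Cref{cor:OGtransitive} (Witt extension, i.e. transitivity of $\og(q)$ on vectors representing the same value), whereas yours is longer but entirely self-contained and constructive --- it avoids Witt's theorem, exhibits the plane explicitly, and establishes as a by-product the standard fact that a non-degenerate isotropic form is spanned by its isotropic vectors.
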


\begin{proof}
    Since $q$ is isotropic, there is a subspace $W'$ with 
    \[q|_{W'} \cong \bH \subseteq q.\]
    Since $\bH$ is universal, we find some $w \in W'$ with $q(w) = q(v)$.
    By Witt's Extension Theorem \cite[Theorem 8.3]{MR2427530}, there is an isometry $\sigma \in \og(q)$ with $\sigma(w) = v$ and the claim follows by putting $W = \sigma(W')$.
\end{proof}

We can now apply this to obtain a bound on the diameter of representation graphs of isotropic quadratic forms.

\begin{corollary} \label{cor:BoundIsotropicDiameter}
    Let $q$ be an isotropic quadratic form and $a\in F$.
    We then have 
    \[\diam(\Gcal_{q, a}) \leq \diam(\Gcal_{\bH, a}),\]
    where we use the conventions $\infty \leq \infty$ and $n \leq \infty$ for all $n\in \N$.
\end{corollary}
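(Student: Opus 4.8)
The plan is to reduce, via \Cref{lem:symmetryreduction} \ref{lem:symmetryreduction:3}, to bounding the distance $\dist(0, v)$ from the origin to an arbitrary vertex $v \in V$, and then to realize a short such path \emph{inside a hyperbolic plane} containing $v$. Concretely, I would fix $v \in V$ and invoke \Cref{lem:VectorsInHyperbolicSpaces} to obtain a $2$-dimensional subspace $W \subseteq V$ with $v \in W$ and $q|_W \cong \bH$; note that $0 \in W$ automatically, since $W$ is a subspace, so both endpoints of the desired path lie in $W$.

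The key observation is that the subgraph of $\Gcal_{q,a}$ induced on the vertex set $W$ is exactly the representation graph $\Gcal_{q|_W, a}$. Indeed, for $x, y \in W$ the difference $x - y$ again lies in $W$, so $q(x-y) = a$ if and only if $(q|_W)(x - y) = a$; hence the edges of $\Gcal_{q,a}$ between vertices of $W$ are precisely the edges of $\Gcal_{q|_W, a}$. Since this induced subgraph is contained in $\Gcal_{q,a}$, every $0$-$v$-path in $\Gcal_{q|_W,a}$ is in particular a $0$-$v$-path in $\Gcal_{q,a}$, which yields $\dist_{\Gcal_{q,a}}(0,v) \leq \dist_{\Gcal_{q|_W,a}}(0,v)$.

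It then remains to assemble the inequalities. From $q|_W \cong \bH$ we get $\Gcal_{q|_W,a} \cong \Gcal_{\bH,a}$, and hence $\dist_{\Gcal_{q|_W,a}}(0,v) \leq \diam(\Gcal_{q|_W,a}) = \diam(\Gcal_{\bH,a})$ by the definition of the diameter as a supremum of distances. Chaining these estimates gives $\dist_{\Gcal_{q,a}}(0,v) \leq \diam(\Gcal_{\bH,a})$ for every $v \in V$, and taking the supremum over $v$ (equivalently, over representatives $v_b$ for $b \in \Dtilde(q)$, exactly as in \Cref{lem:symmetryreduction} \ref{lem:symmetryreduction:3}) establishes the claim. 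The stated conventions $\infty \leq \infty$ and $n \leq \infty$ handle the case $\diam(\Gcal_{\bH,a}) = \infty$, where the bound is vacuous.

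There is essentially no serious obstacle here: the mathematical content is carried entirely by the two preceding lemmas, and the only points requiring care are the bookkeeping with the $\infty$-conventions and the verification that passing to an induced subgraph can only enlarge (never shrink) distances. The single thing worth double-checking is that the diameter is genuinely realized as a supremum of distances measured from one fixed basepoint, so that the reduction in \Cref{lem:symmetryreduction} \ref{lem:symmetryreduction:3} applies verbatim.
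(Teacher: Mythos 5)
Your proposal is correct and follows essentially the same route as the paper's proof: reduce to $0$-$v$-paths by translation invariance, embed $v$ in a hyperbolic plane via \Cref{lem:VectorsInHyperbolicSpaces}, and bound the distance inside the induced subgraph $\Gcal_{q|_W,a} \cong \Gcal_{\bH,a}$. Your write-up is merely more explicit about why the induced subgraph is itself a representation graph and about the $\infty$-conventions, which the paper leaves implicit.
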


\begin{proof}
    By translation invariance it suffices to consider $0$-$v$-paths in $\Gcal_{q,a}$.
    If $v$ is an arbitrary element it is contained in a subform $q' \cong \bH$ of $q$ which is isomorphic to the hyperbolic plane (cf. \Cref{lem:VectorsInHyperbolicSpaces}).
    Therefore we find a $0$-$v$-path in $\Gcal_{q,a}$ which is contained in the subgraph $\Gcal_{q',a} \cong \Gcal_{\bH,a}$. In any case the length of a minimal $0$-$v$-path in $\Gcal_{q,a}$ is bounded by the diameter of $\Gcal_{\bH,a}$.     
\end{proof}

On a side note, it is also easy to see that the connectedness of representation graphs is related to structural aspects of the underlying quadratic forms.
To be precise, if $q$ and $q'$ are quadratic forms over a field $F$ and $a \in F$, such that $\Gcal_{q,a}$ and $\Gcal_{q',a}$ are connected, then $\Gcal_{q\perp q',a}$ is also connected.

\begin{proposition} \label{lem:universal:extension}
    Let $q$ be a quadratic form from which we can split orthogonally a subform $q'$ which is universal and for which $\Gcal_{q',a}$ is connected.
    Then $\Gcal_{q,a}$ is also connected. 
\end{proposition}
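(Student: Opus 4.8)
The plan is to apply \Cref{lem:cayley:connected}, which reduces connectedness of $\Gcal_{q,a}$ to showing that the generating set $V_{q,a}$ generates all of $V$ as a group. Writing the orthogonal splitting as $q = q' \perp q''$ with underlying spaces $V = V' \oplus V''$, I would establish the two inclusions $V' \subseteq \qf{V_{q,a}}$ and $V'' \subseteq \qf{V_{q,a}}$ separately; together these give $V = V' + V'' \subseteq \qf{V_{q,a}}$ and hence $\qf{V_{q,a}} = V$, so that $\Gcal_{q,a}$ is connected.

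For the first inclusion I would simply observe that any $x \in V'$ with $q'(x) = a$ satisfies $q(x) = a$ when regarded as an element of $V$ with trivial $V''$-component, so that $V_{q',a} \subseteq V_{q,a}$. Since $\Gcal_{q',a}$ is connected, \Cref{lem:cayley:connected} gives $\qf{V_{q',a}} = V'$, whence $V' \subseteq \qf{V_{q,a}}$.

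The second inclusion is where universality of $q'$ enters. Given an arbitrary $w \in V''$, set $c = a - q''(w)$. If $c \neq 0$, then universality of $q'$ provides some $v \in V'$ with $q'(v) = c$; the vector $v + w$ then satisfies $q(v+w) = q'(v) + q''(w) = a$, so $v + w \in V_{q,a}$, and consequently $w = (v+w) - v$ lies in $\qf{V_{q,a}}$ because both $v+w \in V_{q,a}$ and $v \in V' \subseteq \qf{V_{q,a}}$. If instead $c = 0$, then $q(w) = q''(w) = a$, so $w \in V_{q,a} \subseteq \qf{V_{q,a}}$ directly. Either way $w \in \qf{V_{q,a}}$, and since $w$ was arbitrary we obtain $V'' \subseteq \qf{V_{q,a}}$.

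The argument is essentially a short generation computation, so I do not anticipate a serious obstacle. The one point requiring care is the case distinction in the second inclusion: universality only guarantees representation of \emph{nonzero} elements of $F$, so the subcase $c = 0$ (that is, $a = q''(w)$) must be treated separately — but precisely there $w$ itself already lies in $V_{q,a}$, so no appeal to universality is needed. This also makes the proof uniform in $a$, covering the case $a = 0$ without modification.
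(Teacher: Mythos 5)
Your proof is correct, and while it is mathematically parallel to the paper's argument, it is routed through a different key lemma, so a comparison is worthwhile. The paper argues with explicit paths: for an arbitrary vertex $(v,w) \in V' \oplus V''$ it uses universality of $q'$ to find $v'$ with $q'(v') = a - q''(w)$, making $(v,w)$ adjacent to $(v-v',0) \in V' \times \{0\}$, and then appeals to connectedness of $\Gcal_{q',a}$ to finish the path to the origin. You instead invoke \Cref{lem:cayley:connected} and prove the group-theoretic statement $\qf{V_{q,a}} = V$ via the two inclusions $V' \subseteq \qf{V_{q,a}}$ (since $V_{q',a} \subseteq V_{q,a}$ and $V_{q',a}$ generates $V'$, again by \Cref{lem:cayley:connected}) and $V'' \subseteq \qf{V_{q,a}}$ (by universality). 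The core trick --- absorbing the $V''$-component of a vector into a single element of $V_{q,a}$ using universality of $q'$ --- is identical in both, down to the computation $q'(v') + q''(w) = a$. What the paper's path formulation buys is quantitative information: it implicitly yields the bound $\diam(\Gcal_{q,a}) \leq \diam(\Gcal_{q',a}) + 1$, in keeping with the paper's emphasis on diameters, which a pure generation argument discards. What your version buys is a more careful treatment of the boundary case: you correctly observe that universality only supplies \emph{nonzero} values and handle $a - q''(w) = 0$ separately, where $w$ itself already lies in $V_{q,a}$. The paper glosses over this point: when $a = q''(w)$ its proof must take $v' = 0$, and if moreover $w = 0$ the claimed edge degenerates to a single vertex, so one has to fall back on the observation that $(v,w)$ already lies in $V' \times \{0\}$. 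Your case distinction also makes explicit that the statement holds uniformly for $a = 0$.
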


\begin{proof}
    Write $q = q' \perp q''$, where $q'$ is the universal subform.
    Let $(v,w) \in V'\times V''$ be arbitrary. 
    Say $q''(w) = b$. 
    Then by the universality of $q'$ there exists $v'$ such that $q'(v') = a-b$.
    Then 
    \[
        (q' \perp q'')((v,w)-(v-v',0)) = q'(v') + q''(w) = a - b + b = a.
    \]
    So there is an edge between $(v,w)$ and $(v-v',0)$, and since $G_{q',a}$ is connected, there is a $(0,0)$-$(v,w)$-path in $\Gcal_{q,a}$.
\end{proof}

\subsection{Representation graphs for $a=0$}\label{subsec:rg:0}

The connectedness of the representation graph $\Gcal_{q,0}$ depends only on whether $q$ is isotropic or not.
Clearly, if $q$ is anisotropic, the representation graph $\Gcal_{q,0}$ consists only of isolated points. 
We will show that if $q$ is isotropic, the graph $\Gcal_{q,0}$ has a diameter of $2$ and is therefore always connected.

\begin{proposition} \label{prop:isotropic:diameter}
    Let $F$ be a field. 
    Let $q$ be an isotropic quadratic form over $F$. 
    Then $\diam(\Gcal_{q,0}) = 2$ so in particular $\Gcal_{q,0}$ is connected.
\end{proposition}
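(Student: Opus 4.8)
The plan is to establish the two inequalities $\diam(\Gcal_{q,0})\le 2$ and $\diam(\Gcal_{q,0})\ge 2$ separately, the first being the substantive part. For the upper bound I would invoke \Cref{cor:BoundIsotropicDiameter} to reduce to the hyperbolic plane, so that it suffices to show $\diam(\Gcal_{\bH,0})\le 2$. Realizing $\bH=[0,0]$ on $F^2$ via $q(x,y)=xy$, the nonzero isotropic vectors are exactly those lying on one of the two coordinate axes. Given an arbitrary target vertex $v=(s,t)$, if $s=0$ or $t=0$ then $v$ is itself isotropic and hence adjacent to the origin, so $\dist(0,v)\le 1$; otherwise $(s,0)$ is a nonzero isotropic vector with $v-(s,0)=(0,t)$ also isotropic, so that $(0,0),(s,0),(s,t)$ is a path of length $2$. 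By \Cref{lem:symmetryreduction} this bounds all distances, giving $\diam(\Gcal_{\bH,0})\le 2$ and hence the upper bound for every isotropic $q$. Alternatively one can bypass the corollary and argue directly: by \Cref{lem:VectorsInHyperbolicSpaces} every vertex of $\Gcal_{q,0}$ lies in a hyperbolic subplane, inside which the same two-step path exists.

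For the lower bound it suffices to observe that the graph is not complete, i.e. to exhibit a vertex at distance at least $2$ from the origin. Since a non-degenerate isotropic form has $\dim(q)\ge 2$ (one-dimensional non-degenerate forms $\qf{b}$ with $b\neq 0$ are anisotropic), it represents a nonzero value; concretely, restricting to a hyperbolic plane, the vector $v=(1,1)$ satisfies $q(1,1)=1\neq 0$ in every field. Such a $v$ is not adjacent to the origin, so $\dist(0,v)\ge 2$. Combined with the upper bound this yields $\dist(0,v)=2$ and therefore $\diam(\Gcal_{q,0})=2$; connectedness is then an immediate consequence.

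The only points requiring genuine care are in the path construction: one must verify that the intermediate vertex $(s,0)$ is distinct from both endpoints—which is precisely the condition $s\neq 0$ and $t\neq 0$—so that the sequence is a legitimate path of length exactly $2$ rather than a degenerate shorter walk, and one must separate off the two boundary cases in which $v$ already lies on an axis and is thus directly adjacent to the origin. Neither constitutes a real obstacle; the argument is entirely elementary once the reduction to the hyperbolic plane via \Cref{cor:BoundIsotropicDiameter} (equivalently \Cref{lem:VectorsInHyperbolicSpaces}) is in place.
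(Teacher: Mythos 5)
Your proposal is correct and follows essentially the same route as the paper: reduce to the hyperbolic plane via \Cref{cor:BoundIsotropicDiameter}, then exhibit the explicit two-step path $(0,0) \to (s,0) \to (s,t)$ through an isotropic axis vector, exactly as the paper does with $\lambda_1 e_1$. The only difference is that you spell out the lower bound $\diam(\Gcal_{q,0}) \geq 2$ (non-completeness via an anisotropic vector), which the paper leaves implicit; this is a harmless and slightly more careful addition, not a different argument.
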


\begin{proof}
    First, by \Cref{cor:BoundIsotropicDiameter} we have $\diam(\Gcal_{q,0}) \leq \diam(\Gcal_{\bH,0})$. 
    So we only need to consider $\Gcal_{\bH,0}$ and it is enough to bound the length of a shortest $0$-$v$-path by translation invariance.
    
    For $\bH \cong [0,0]$ the standard basis $e_1,e_2$ consists of isotropic vectors.
    Let $v \in F^2$.
    If $v$ is isotropic, i.e. $\bH(v) = 0$, there is clearly a path of length at most $1$.
    Otherwise we write $v = \lambda_1 e_1 + \lambda_2 e_2$ with $\lambda_1 \neq 0 \neq \lambda_2$, so $(0,\lambda_1 e_1,\lambda_1 e_1 + \lambda_2 e_2 = v)$ is a $0$-$v$-path of length $2$.
\end{proof}

\subsection{Hyperbolic planes} \label{subsec:hyp:plane}
We collect some results specific to hyperbolic planes, not restricted to finite fields.

\begin{lemma} \label{lem:hyp:group:generators}
    Let $F$ be a field with at least $5$ elements. 
    The set $\{(x,x^{-1})\ \mid x \in F^\ast \}$ is a group generating set of $F^2$.
\end{lemma}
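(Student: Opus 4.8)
The plan is to recognize that $S = \{(x,x^{-1}) \mid x\in F^\ast\}$ is precisely the set $V_{\bH,1}$ of vectors representing $1$ under $\bH = [0,0]$, since $\bH(x,x^{-1}) = x\cdot x^{-1} = 1$; by \Cref{lem:cayley:connected} the assertion is equivalent to the connectedness of $\Gcal_{\bH,1}$. Write $H = \langle S\rangle$ for the additive subgroup generated by $S$; the goal is to show $H = F^2$. The main obstacle is that $H$ need only be an additive (equivalently $\F_p$-linear) subgroup, not an $F$-subspace, so over $\F_{p^m}$ with $m>1$ it is not enough to check that $S$ spans $F^2$ over $F$. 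The crux is therefore to manufacture genuine $F$-multiples of a fixed vector inside $H$.

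First I would produce vectors lying on a coordinate axis. For $x,y\in F^\ast$ with $x+y\neq 0$, set $z = xy/(x+y)\in F^\ast$, so that $z^{-1} = x^{-1}+y^{-1}$. Then
\[
(x,x^{-1}) + (y,y^{-1}) - (z,z^{-1}) = \left(\tfrac{x^2+xy+y^2}{x+y},\,0\right) \in H,
\]
the second coordinate vanishing by the choice of $z$. Specializing $y = tx$ for a fixed parameter $t\in F^\ast$ with $t\neq -1$, the first coordinate becomes $x\kappa$ with $\kappa = (t^2+t+1)/(t+1)$ depending only on $t$, so that $(x\kappa,0)\in H$ for every $x\in F^\ast$.

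To finish, I would fix $t\in F^\ast$ with $t\neq -1$ and $t^2+t+1\neq 0$, which makes $\kappa\in F^\ast$. This is exactly where the hypothesis enters: the values of $t$ to avoid are $-1$ together with the at most two roots of $t^2+t+1$ (and $-1$ is not such a root, since it yields the value $1$), so at most three elements must be excluded from $F^\ast$; as $|F^\ast|\geq 4$ when $|F|\geq 5$, a suitable $t$ exists. With $\kappa$ fixed and nonzero, the map $x\mapsto x\kappa$ permutes $F^\ast$, so $\{(x\kappa,0)\mid x\in F^\ast\} = F^\ast\times\{0\}$ and hence $F\times\{0\}\subseteq H$. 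Finally, for each $x\in F^\ast$ we have $(x,x^{-1}) - (x,0) = (0,x^{-1})\in H$, so $\{0\}\times F\subseteq H$ as well, and therefore $H \supseteq (F\times\{0\}) + (\{0\}\times F) = F^2$, as required. The only real work is the opening algebraic identity and the counting argument for the existence of $t$; once $F$-multiples along one axis are available, the rest is immediate.
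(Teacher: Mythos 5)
Your proof is correct and takes essentially the same approach as the paper: the identical three-term trick of adding two generators and subtracting a third chosen so that one coordinate cancels, the same homogeneity observation (your substitution $y = tx$ plays the role of the paper's closure of the value set under nonzero scaling), and the same counting argument excluding $-1$ and the at most two roots of $t^2+t+1$, which is exactly where $|F|\geq 5$ enters. The only cosmetic differences are that you cancel the second coordinate first (producing $F\times\{0\}$) and then get the other axis by the shortcut $(x,x^{-1})-(x,0)=(0,x^{-1})$, whereas the paper produces $(0,f)$ first and invokes symmetry for the other axis.
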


\begin{proof}
    We show that the set $\{(x, x^{-1})\ \mid x \in F^\ast\}$ generates all vectors of the form $(0,f)$ with $f \in F$. If $x,y,x+y \neq 0$ then
    \[
        (x,x^{-1}) + (y,y^{-1}) - (x+y,(x+y)^{-1}) = (0,x^{-1} + y^{-1} +(x+y)^{-1})
    \]
    represents every vector $(0,f)$ with 
    \[
        f \in S = \{ x^{-1} + y^{-1} - (x + y)^{-1} \ \mid x, y, x + y \neq 0 \}.
    \]

    We claim that $S \supseteq F^\ast$. 
    Note that $S$ is closed under multiplication with nonzero scalars, so it is sufficient to show that $S\cap F^\ast \neq \emptyset$.
    Note that for $x = 1$ and $y\notin\{0, -x\}=\{0, -1\}$ we have
    \[
        \frac{1}{x}+ \frac{1}{y} - \frac{1}{x + y} = \frac{y^2 + y + 1}{y+y^2},
    \]
    which is zero if and only if $y^2 + y + 1=0$.
    This quadratic equation has at most two solutions.
    So if $|F|\geq 5$, we find some nonzero element in $S$.
    
    Similarily, we get every vector $(f,0)$ and thus every element of $F^2$.
\end{proof}

With this explicit generating set we can derive an upper bound on the diameter in the hyperbolic case.

\begin{proposition} \label{prop:hyp:plane:connected}
    Let $\bH$ be the hyperbolic plane over a field $F$ and let $a \in F^\ast$. 
    \begin{enumerate}
        \item The graph $\Gcal_{\bH,a}$ is connected if and only if $|F| \geq 5$.
        \item If $|F| \geq 5$, then $\dist(0,v) \leq 3$ for all isotropic $v$ and $\diam(\Gcal_{q,a}) \leq 4$.
    \end{enumerate}
\end{proposition}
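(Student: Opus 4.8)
The plan is to normalize to $a=1$. Since $\bH$ is round with $\D(\bH) = \G(\bH) = F^\ast$, every $a \in F^\ast$ is a similarity factor, so $\Gcal_{\bH,a} \cong \Gcal_{\bH,1}$ and it suffices to treat $a=1$. With $\bH(x,y) = xy$ the generating set is $S := V_{\bH,1} = \{(x,x^{-1}) \mid x \in F^\ast\}$, which is symmetric since $-(x,x^{-1}) = (-x,(-x)^{-1}) \in S$. For part (a), \Cref{lem:cayley:connected} says that $\Gcal_{\bH,1} = \operatorname{Cay}(F^2,S)$ is connected if and only if $S$ generates $F^2$. If $|F| \geq 5$ this is exactly \Cref{lem:hyp:group:generators}, giving connectedness. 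The only fields with $|F| < 5$ are $\F_2, \F_3, \F_4$, and the explicit computations in \Cref{sec:first:examples} exhibit $2$, $3$ and $4$ components there, so $\Gcal_{\bH,1}$ is disconnected; this proves (a).

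For part (b) assume $|F| \geq 5$. First I would bound distances to isotropic vectors, which are exactly the $(f,0)$ and $(0,f)$ with $f \in F$ (and give distance $0$ when $f=0$). For $v = (0,f)$ with $f \neq 0$, the proof of \Cref{lem:hyp:group:generators} supplies $x,y \in F^\ast$ with $x+y \neq 0$ and $x^{-1}+y^{-1}-(x+y)^{-1} = f$. Then
\[(0,0),\ (x,x^{-1}),\ (x+y,x^{-1}+y^{-1}),\ (0,f)\]
is a walk of length $3$ whose consecutive differences $(x,x^{-1})$, $(y,y^{-1})$, $-(x+y,(x+y)^{-1})$ all lie in $S$; since $x$, $y$, $x+y$ and $f$ are nonzero its four vertices are pairwise distinct, so it is a genuine path and $\dist(0,v) \leq 3$. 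The symmetric argument handles $v = (f,0)$.

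For the diameter, \Cref{lem:symmetryreduction} \ref{lem:symmetryreduction:3} reduces the problem to bounding $\dist(0,v)$ for one representative $v$ of each $b \in \Dtilde(\bH) = F$. For $b = 0$ the previous paragraph gives $\dist(0,v) \leq 3$. For $b \neq 0$ write $v = (p,q)$ with $pq = b$, so $p,q \neq 0$, and set $s = (p,p^{-1}) \in S$. Then $v - s = (0, q - p^{-1})$ is isotropic, so by the isotropic case together with translation invariance (\Cref{lem:symmetryreduction} \ref{lem:symmetryreduction:2}) we get $\dist(s,v) = \dist(0, v-s) \leq 3$. Since $s$ is a neighbour of $0$, the triangle inequality (valid by connectedness) yields
\[\dist(0,v) \leq \dist(0,s) + \dist(s,v) \leq 1 + 3 = 4,\]
and taking the supremum gives $\diam(\Gcal_{\bH,a}) \leq 4$.

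The heavy lifting is done by \Cref{lem:hyp:group:generators}; the rest is bookkeeping. The points needing care are verifying that the length-$3$ walk to an isotropic vector is genuinely a path — which is what forces one to track the nonvanishing of $x$, $y$ and $x+y$ — and choosing the first edge $s = (p,p^{-1})$ so that a single step lands on the isotropic axis where the bound of $3$ applies. I expect the only real subtlety to be the degenerate cases (for instance $q = p^{-1}$, where $v = s$ and $\dist(0,v) = 1$ directly), but each of these collapses into a shorter path and so respects the claimed bounds.
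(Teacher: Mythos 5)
Your proposal is correct and follows essentially the same route as the paper: normalize to $a=1$ by roundness, get connectedness from \Cref{lem:hyp:group:generators} together with \Cref{lem:cayley:connected}, and obtain the diameter bound by combining the explicit length-$3$ paths to isotropic vectors (extracted from the proof of \Cref{lem:hyp:group:generators}) with one additional edge. The only cosmetic difference is that you attach the extra edge at the start of the path and use translation invariance, whereas the paper appends the edge $(1,1)$ at the end and invokes \Cref{lem:symmetryreduction} with the representative $v_b=(b,1)$; your explicit check that the length-$3$ walk has distinct vertices is a nice touch but not needed, since any walk between two vertices contains a path of no greater length.
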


\begin{proof}
    We may assume $a = 1$ by the roundness of the hyperbolic plane, i.e. $q \cong a q$ for all $a \in F^\ast$ (see \Cref{sec:prelim}).
    If $|F| \leq 4$, then $\Gcal_{q,a}$ is disconnected as discussed in the examples in \Cref{sec:first:examples}.
    
    Now let $|F| \geq 5$.
    Since $\bH(x,y)=xy$, we have
    \[
        V_1 = \{ (x,x^{-1}) \ \mid x \in F^\ast \}
    \]
    which is a group generating set of $F^2$ by \Cref{lem:hyp:group:generators}, so the graph is connected by \Cref{lem:cayley:connected}. 

    For the bound on the diameter we observe that the proof of \Cref{lem:hyp:group:generators} shows that there is a path of length at most $3$ from the origin to $(b - 1, 0)$. 
    Concatenating this path with the edge $(1, 1)$ yields a path of length at most 4 from the origin to $(b, 1)$.    
    The assertion now follows from choosing $v_b = (b,1)$ in \Cref{lem:symmetryreduction}.    
\end{proof}

On the other hand, a simple argument gives a lower bound for the diameter in the hyperbolic case over arbitrary fields.

\begin{lemma}\label{lem:Va+VaCapV0trivial}
    Let $q$ be a quadratic form of dimension 2.
    For $a \in F^\ast$, we have
    \[(V_a + V_a) \cap V_0 = \{0\}.\]
    In particular, $\diam(\Gcal_{\bH,a}) \geq 3$.
\end{lemma}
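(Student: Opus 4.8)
The plan is to establish the set identity first and then read off the diameter bound as an immediate consequence. The real content is the inclusion $(V_a + V_a)\cap V_0 \subseteq \{0\}$; the reverse inclusion holds as soon as $V_a\neq\emptyset$ (e.g.\ for $\bH$, which is universal), since $v+(-v)=0$ with $q(\pm v)=a$. So I would fix $v,w\in V_a$ with $q(v+w)=0$, set $u:=v+w$, and aim to show $u=0$.

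The key computation uses only the polarization form $b_q$. From $b_q(x,y)=q(x+y)-q(x)-q(y)$ we get $b_q(v,w)=q(v+w)-q(v)-q(w)=0-a-a=-2a$, while $b_q(v,v)=2q(v)=2a=b_q(w,w)$. By symmetry and bilinearity of $b_q$ this yields $b_q(u,v)=b_q(v,v)+b_q(w,v)=2a+(-2a)=0$ and likewise $b_q(u,w)=0$, so $u$ is $b_q$-orthogonal to both $v$ and $w$.

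Now I would split on the linear (in)dependence of $v,w$. If they are linearly independent, then since $\dim(q)=2$ they span $V$, so $u$ is orthogonal to all of $V$, i.e.\ $u\in\rad(b_q)=\{0\}$ by non-degeneracy, giving $u=0$. If instead $v,w$ are linearly dependent, note that $q(v)=a\neq 0$ forces $v\neq 0$, so $w=\lambda v$ for some $\lambda\in F$, and then $0=q(u)=q((1+\lambda)v)=(1+\lambda)^2 a$; as $a\neq 0$ this gives $(1+\lambda)^2=0$, hence $1+\lambda=0$ and $u=(1+\lambda)v=0$. This case split is the main (though modest) obstacle: one must handle the degenerate span separately and keep the argument characteristic-free. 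The dependent case sidesteps any ``$2$ is invertible'' issue precisely because we extract the square $(1+\lambda)^2$ rather than trying to solve a linear relation, so the whole proof works uniformly in every characteristic.

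Finally, for the diameter statement: since $\bH$ is isotropic there is a nonzero $v\in V_0$. I claim $\dist(0,v)\geq 3$ in $\Gcal_{\bH,a}$. Indeed $\dist(0,v)=1$ would force $q(v)=a\neq 0$, contradicting $v\in V_0$, while $\dist(0,v)=2$ via a path $(0,x,v)$ would require $x\in V_a$ and $v-x\in V_a$, whence $v=x+(v-x)\in (V_a+V_a)\cap V_0$ with $v\neq 0$, contradicting the identity just proved. Therefore $\diam(\Gcal_{\bH,a})\geq \dist(0,v)\geq 3$, as desired.
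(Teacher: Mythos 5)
Your proof is correct and takes essentially the same route as the paper's: the key computation $b_q(v,w) = -2a$, the observation that linear independence of $v,w$ would contradict non-degeneracy (you phrase this as $u = v+w$ lying in $\rad(b_q)$, the paper as singularity of the Gram matrix of $b_q$ in the basis $v,w$), and the characteristic-free square argument $(1+\lambda)^2 a = 0$ in the linearly dependent case. Your explicit treatment of the reverse inclusion and of the ``in particular'' diameter bound fills in steps the paper leaves implicit, but the substance is identical.
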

\begin{proof}
    Let $v, w \in V_a$ be such that \[0 = q(v + w) = q(v) + q(w) + b_q(v, w),\]
    which implies $b_q(v, w) = -2a$.
    If $v, w$ were a basis, $q$ would be degenerate.
    So $v, w$ are linearly dependent and we have $v + w = \lambda v$ for some $\lambda \in F$.
    Finally we get
    \[0 = q(v + w) = q(\lambda v) = \lambda^2 q(v) = \lambda^2 a,\]
    so $\lambda = 0$ and $v + w = 0$.
\end{proof}

\subsection{On paths of length two} \label{sec:counting:lemma}
In the case of finite fields, it will often be helpful to construct paths between points, or to show that certain paths cannot exist, by purely enumerative considerations.
The following is our main tool for this.

\begin{lemma} \label{lem:NumberOfSums}
    Let $F$ be a field and $q$ be a quadratic form over $F$ with $\dim(q) = 2$. 
    For $a, b\in F^\ast$, we consider a vector $w \in V_a + V_b \setminus \{0\}$.
    \begin{enumerate}
        \item \label{lem:NumberOfSums1} There are at most two different tuples $(u, v) \in V_a \times V_b$ such that $w = u + v$.
        \item \label{lem:NumberOfSums2} Let $q(w) \neq 0$. 
        There exist two different tuples $(u, v) \in V_a \times V_b$ with $w = u + v$ if and only if there exists such a tuple with $u, v$ linearly independent.
    \end{enumerate}
    Now let $F$ be finite with $|F| = f \in \N$.
    We set \[k = \begin{cases}
            f + 1, &\text{ if $q$ is anisotropic}\\
            f - 1, &\text{ if $q$ is isotropic}
        \end{cases}\]
    \begin{enumerate}[resume]
        \item \label{lem:NumberOfSums3} If $w$ is such that there is a unique tuple $(u, v)\in V_a\times V_b$ with $u + v = w$, then $ab\in F^{\ast2}$.
        Conversely, if $ab \in F^{\ast2}$, then the number of $w$ such that there exists a unique tuple $(u, v) \in V_{a}\times V_b$ with $u + v = w$ is as given in \Cref{tbl:Unique:Rep}.
        \begin{table}[ht]
        \renewcommand{\arraystretch}{1.25}
        \begin{tabular}{|c||c|c|}
            \hline
            conditions on $a$ and $b$ & $\cha(F) \neq 2$ & $\cha(F) = 2$\\  
            \hline\hline
            $a = b$ & $k$ & $0$  \\ \hline
            $a \neq b$ & $2k$ & $k$  \\ \hline 
        \end{tabular}
        \caption{Number of vectors in $V_a + V_b$ with a unique decomposition} \label{tbl:Unique:Rep}
        \end{table}
        \item \label{lem:NumberOfSums4} The cardinality of $V_a + V_b$ is as in \Cref{tbl:SizeVa+Vb}.
    \begin{table}[ht]
    \renewcommand{\arraystretch}{1.25}
        \begin{tabular}{|c||c|c|}
            \hline
            conditions on $a$ and $b$ & $\cha(F) \neq 2$ & $\cha(F) = 2$\\  
            \hline\hline
            $ab \notin F^{\ast2}$ & $\frac{k^2}2$ & $0$ \\ \hline
            $a = b$ & $\frac{k^2}2 + 1$ & $\frac{k \cdot (k - 1)}2 + 1$ \\ 
            \hline 
            $ab \in F^{\ast2}, a \neq b$ & $\frac{k \cdot (k+2)}2$ & $\frac{k \cdot (k+1)}2$ \\
            \hline 
        \end{tabular}
        \caption{Size of $V_a + V_b$} \label{tbl:SizeVa+Vb}
        \end{table}
    \end{enumerate}
\end{lemma}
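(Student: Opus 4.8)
The plan is to treat the two arbitrary-field parts \ref{lem:NumberOfSums1}, \ref{lem:NumberOfSums2} by reducing the search for decompositions $w = u + v$ to finding roots of a single polynomial of degree at most $2$ in one variable, and then to obtain the finite-field counts in \ref{lem:NumberOfSums3}, \ref{lem:NumberOfSums4} by combining this root count with the size information $|V_a| = |V_b| = k$ from \Cref{prop:NumberOfRepresentations}. First I would fix $w \in (V_a + V_b)\setminus\{0\}$ and observe that a tuple $(u,v)$ with $u+v=w$ is determined by $u$ alone, and that $q(w-u) = q(w) - b_q(w,u) + q(u)$ forces every admissible $u$ to satisfy the \emph{linear} equation $b_q(w,u) = q(w) + a - b$. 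Since $q$ is non-degenerate and $w\neq 0$, the functional $b_q(w,\cdot)$ is non-zero, so its level set is an affine line $L = \{u_0 + td \mid t\in F\}$ with $\spn(d) = w^\perp$. Restricting $q$ to $L$ gives $q(u_0+td) = q(d)\,t^2 + b_q(u_0,d)\,t + q(u_0)$, and the admissible $u$ are exactly the roots of this polynomial minus $a$. If it is non-constant it has at most two roots, giving \ref{lem:NumberOfSums1}; the only way it can be constant is $q(d)=0$ and $b_q(u_0,d)=0$, and since an isotropic $d$ satisfies $d^\perp = \spn(d)$ in a non-degenerate binary form, this forces $u_0\in\spn(d)$, hence $q(u_0)=0\neq a$, so there is no solution at all.

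For \ref{lem:NumberOfSums2} the key point is that a tuple is \emph{dependent} exactly when $u\in\spn(w)$, and I would first record that there is at most one dependent tuple: writing $u=\lambda w$, the equations $\lambda^2 q(w)=a$ and $(1-\lambda)^2 q(w)=b$ determine $\lambda$ uniquely (subtracting them is linear in $\lambda$ if $\cha F\neq 2$, and uses injectivity of Frobenius if $\cha F = 2$). This already yields the forward direction. For the converse I would use $q(w)\neq 0$: since $\spn(d)=w^\perp$ and $q(w)\neq0$ imply $q(d)\neq0$, the polynomial above has degree exactly $2$, and a direct coefficient comparison shows that a root $t_1$ is a \emph{double} root if and only if $b_q(u_0+t_1d,\,d)=0$, i.e. if and only if $u\in d^\perp=\spn(w)$, i.e. if and only if the tuple is dependent. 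Hence an independent tuple is a simple root of a degree-$2$ polynomial and forces a second distinct root, producing a second tuple. This equivalence ``dependent $\Leftrightarrow$ double root'' is the structural heart of the argument and drives the counting.

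For \ref{lem:NumberOfSums3} and \ref{lem:NumberOfSums4} I would specialize to a finite field. When $q(w)\neq0$ for all $w\neq0$ (in particular when $q$ is anisotropic), part \ref{lem:NumberOfSums2} shows that $w$ has a unique decomposition precisely when its single tuple is dependent, and the dependence computation shows dependent tuples exist iff $b/a = ab/a^2 \in F^{\ast2}$, i.e. $ab\in F^{\ast2}$; this is the first sentence of \ref{lem:NumberOfSums3}. For the count I would parametrize dependent tuples by pairs $(u,\mu)$ with $u\in V_a$, $\mu^2 = b/a$ and $\mu\neq -1$ (so that $w=(1+\mu)u\neq0$), verify that $(u,\mu)\mapsto(1+\mu)u$ is a bijection onto the uniquely-decomposable $w$, and count the admissible $\mu$: two square roots if $\cha F\neq2$, one if $\cha F = 2$, with $\mu=-1$ occurring exactly when $a=b$. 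This reproduces \Cref{tbl:Unique:Rep}. Part \ref{lem:NumberOfSums4} then follows by double counting $k^2 = |V_a|\,|V_b| = \sum_w \#\{\text{decompositions of }w\}$: the vector $w=0$ contributes $k$ if $a=b$ and $0$ otherwise, so writing $n_1,n_2$ for the numbers of non-zero $w$ with one resp.\ two decompositions, part \ref{lem:NumberOfSums1} gives $n_1 + 2n_2 = k^2 - k$ (if $a=b$) or $k^2$ (if $a\neq b$); since $n_1$ is known from \ref{lem:NumberOfSums3} and $|V_a+V_b| = n_1 + n_2$ (plus one if $a=b$), solving this linear system gives \Cref{tbl:SizeVa+Vb}.

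The step I expect to be the main obstacle is not any single estimate but the careful case bookkeeping in \ref{lem:NumberOfSums3}/\ref{lem:NumberOfSums4}, and in particular the role of vectors with $q(w)=0$. The clean dichotomy of \ref{lem:NumberOfSums2} genuinely requires $q(w)\neq0$, since for isotropic $w$ the polynomial above drops to degree $1$ and typically yields a \emph{unique} (possibly independent) decomposition; such $w$ exist only when $q$ is isotropic, so that case must be isolated and its isotropic vectors counted separately. Alongside this I would keep track of the characteristic-$2$ subtleties used throughout, namely injectivity of Frobenius and the fact that $F^{\ast2}=F^\ast$ over a finite field of characteristic $2$, which is what collapses several of the rows in the two tables.
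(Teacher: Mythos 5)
Parts \ref{lem:NumberOfSums1} and \ref{lem:NumberOfSums2} of your proposal are correct, and \ref{lem:NumberOfSums2} takes a genuinely different route from the paper's. You identify dependent decompositions with \emph{double roots} of the quadratic $q(u_0+td)-a$ on the line $L$, so that an independent decomposition is a simple root and the factorization of a degree-two polynomial automatically hands you the second decomposition. The paper instead produces the second decomposition geometrically, by applying the reflection fixing $\spn(w)$ to an independent pair, and excludes the coexistence of dependent and independent decompositions by showing that the Gram matrix of such an independent pair would be singular. Your discriminant argument treats both characteristics uniformly (the paper needs a case distinction on $\cha(F)$ to see that its reflection vector is anisotropic), and it makes the dichotomy ``unique decomposition $\Leftrightarrow$ dependent decomposition'' transparent for anisotropic $w$, which is exactly what the counting parts need.

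The genuine gap is in \ref{lem:NumberOfSums3} and \ref{lem:NumberOfSums4}: you prove that dichotomy, and with it the parametrization $(u,\mu)\mapsto(1+\mu)u$ of the uniquely decomposable vectors, only under the hypothesis that $q$ is anisotropic, and you defer the isotropic vectors of an isotropic $q$ to a ``separate counting'' that you never carry out. This cannot be completed so as to ``reproduce \Cref{tbl:Unique:Rep}'', because for isotropic $q$ and $a\neq b$ those vectors genuinely change the answer: every nonzero isotropic vector then lies in $V_a+V_b$ with a \emph{unique} (independent) decomposition, exactly as your degree-drop observation predicts, and these vectors are not of the form $(1+\mu)u$. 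Concretely, over $F=\F_5$ with $q=\bH$, $a=1$, $b=2$ (so $ab=2\notin F^{\ast2}$), the vector $w=(1,0)$ has the unique decomposition $(4,4)+(2,1)\in V_1\times V_2$, contradicting the first sentence of \ref{lem:NumberOfSums3}; moreover $V_1+V_2$ consists of four anisotropic vectors with two decompositions each and eight isotropic vectors with one each, so $|V_1+V_2|=12$, not $\tfrac{k^2}{2}=8$ as \Cref{tbl:SizeVa+Vb} claims. You should know that the paper's own proof commits precisely the oversight you flagged: it asserts that for $ab\notin F^{\ast2}$ every $w=u+v$ has exactly two representations, which silently applies \ref{lem:NumberOfSums2} to vectors $w$ that may be isotropic. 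Both your plan and the paper's argument (and the stated tables) are valid for anisotropic $q$, and in the row $a=b$ for arbitrary $q$, where \Cref{lem:Va+VaCapV0trivial} excludes nonzero isotropic sums --- these happen to be the only cases the paper uses later. So the correct repair is to add the hypothesis ``$q$ anisotropic or $a=b$'' to \ref{lem:NumberOfSums3} and \ref{lem:NumberOfSums4}; under that hypothesis your double-counting derivation of \Cref{tbl:SizeVa+Vb} from \Cref{tbl:Unique:Rep} goes through and is, if anything, cleaner than the paper's direct enumeration of tuple types.
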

\begin{proof}
    \ref{lem:NumberOfSums1}: To count the number of tuples $(u, v) \in V_a\times V_b$ with $u + v = w$, it suffices to count the number of possible choices for $u \in V_a$, since $v = w - u$.
    Such $u$ must satisfy
    \begin{align}\label{eq:restriction}
        q(u) = a \quad \text{and} \quad b_q(u,w) = q(w) + a - b.
    \end{align}
    Since these are polynomial constraints of degrees $2$ and $1$ respectively, B\'ezout's theorem (see e.g. \cite[Theorem 3.1]{Schmid} for a proof in the affine case) gives that there are at most two solutions to this system of equations.

    \ref{lem:NumberOfSums2}: Let now $w$ be an anisotropic vector and $w = u + v$ with $u\in V_a, v \in V_b$ and $u, v$ linearly independent.
    We have $\dim(w^\perp) = 1$ and choose a vector $p \in w^\perp\setminus\{0\}$.
    Then $p$ is also anisotropic: If $\cha(F) =2$, this follows because $w^\perp = \spn(w)$ and $w$ is anisotropic by assumption. If $\cha(F) \neq 2$, then $w, p$ form an orthogonal basis of $V$, where $q(p) = 0$ would contradict the non-degeneracy of $q$.
    
    Since $p$ is anisotropic, we can consider the reflection at $p^\perp = \spn(w)$ given by
    \[\sigma:V\to V, x\mapsto x - \frac{b_q(p, x)}{q(p)}\cdot p.\]
    Since we have $\sigma(x) = x$ if and only if $x \in w^\perp$, we have $\sigma(u) \neq u$ and $\sigma(v) \neq v$ as otherwise we would obtain a contradiction to their linear independence.
    Now $\sigma$ is an isometry and we have
    \[w = \sigma(w) = \sigma(u + v) = \sigma(u) + \sigma(v),\]
    which yields a second tuple $(\sigma(u),\sigma(v))$ in $V_a\times V_b$ that adds up to $w$.

    Conversely let $v = \lambda u$ with $\lambda \in F^\ast$.
    Then
    \begin{align}\label{eq:blambda^2a}
        b = q(v) = q(\lambda v) = \lambda^2 q(v) = \lambda^2 a.
    \end{align}
    We claim that if $(x, y) \in V_a \times V_b$ is such that $x + y = w$ and $(x, y) \neq (u, v)$, then $x, y$ must be linearly independent.
    To see this, we assume on the contrary that $x,y$ are linearly dependent.
    It is easy to see that $x,y \in \spn(u)$.  
    Now $x \in V_a \cap \spn(u) = \{ \pm u \}$ and $y \in V_b \cap \spn(v) = \{ \pm v \} = \{ \pm \lambda u \}$.
    This makes it easy to see that $x = u$ and $y=v$ are actually required.
       
    So let $x \in V_a,y \in V_b$ be linearly independent.
    Then 
    \[q(u) + q(v) + b_q(u, v) = q(w) = q(x) + q(y) + b_q(x,y),\]
    and so 
    \begin{equation*}
       b_q(x, y) = b_q(u,v) = q(u+v) - q(u) -q(v) = 2\lambda a.
    \end{equation*}
   This implies that the Gram matrix for $b_q$ in the basis $(x, y)$ is singular, which is clearly a contradiction.

    \ref{lem:NumberOfSums3}: The first part follows directly from \eqref{eq:blambda^2a}.
    Now let $ab \in F^{\ast2}$.
    If $a = b$, we have $w = 2u$ for $u \in V_a$. 
    If $2 = 0$, this cannot happen for $w \neq 0$ and otherwise there are $k$ such $u$ by \Cref{prop:NumberOfRepresentations}.
    
    If $a \neq b$ there is $\lambda \in F^\ast \setminus \{\pm 1 \}$ such that $b = a\lambda^2$.
    Then $w = (1 \pm \lambda) u$ for $u \in V_a$, and there are $k$ such $u$ by \Cref{prop:NumberOfRepresentations}. 
    The claim now follows easily.
    
    \ref{lem:NumberOfSums4}: 
    Let first $\cha(F) \neq 2$.
    If $ab \notin F^{\ast2}$, every vector $w = u + v$ with $u \in V_a, v \in V_b$ has exactly two such representations as a sum.
    If $a = b$, we have $k$ tuples $(u,u)$, $k$ tuples $(u,-u)$, and $k(k-2)$ tuples $(u,v)$ with $u,v$ linearly independent, giving $k + 1 + \frac{k(k-2)}{2}$ different $w \in V_a + V_a$.
    Similarly, if $ab \in F^{\ast2}$ but $a\neq b$, we obtain $2k + 0 + \frac{k\cdot(k-2)}2 = \frac{k\cdot{k+2}}2$ elements in $V_a + V_b$.

    Note that if $\cha(F) = 2$, $F$ is perfect and every element in $F$ is a square, which eliminates the case $ab \not\in F^{\ast2}$. 
    The rest of the calculation is similar.
\end{proof}

The version of B\'ezout's theorem used in the proof of \Cref{lem:NumberOfSums} \ref{lem:NumberOfSums1} can be shown elementarily:
Choosing coordinates, the linear constraint in \eqref{eq:restriction} can be used to write the second coordinate as a linear expression in the first one. 
Using this expression for the second coordinate in the quadratic constraint in \eqref{eq:restriction} yields a quadratic expression for the first coordinate, which has at most two solutions.\medskip

In the following result we explicitly determine which values $a \in F^\ast$ occur as the value of $q$ at an endpoint of paths of length two starting at the origin.
This proved to be a useful criterion for explicit computations.

\begin{proposition}
    Let $F$ be a field and let $q$ be a quadratic form of dimension 2 representing 1.
    For $a \in F^\ast$, there exists some $w \in V_1 + V_1$ with $q(w) = a$ if and only if
    \begin{equation*}
        \det(q) \cdot (4a - a^2) \text{ is a square} \quad \text{if $\cha(F) \neq 2$}
    \end{equation*}
     and 
    \begin{equation*}
        \Delta(q) + \frac{1}{a} \in \wp(F) \quad \text{if $\cha(F) = 2$.}
    \end{equation*}    
\end{proposition}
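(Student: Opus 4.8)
The plan is to exploit the transitivity of the orthogonal group to reduce the problem to a single vector, and then to read off the answer from an explicit normal form of $q$. By \Cref{cor:OGtransitive}, $\og(q)$ acts transitively on $V_1$, and since every $\sigma \in \og(q)$ preserves both $V_1 + V_1$ and the values of $q$, the set of achievable $a$ is unchanged if we fix one summand: given $w = u + v$ with $u,v \in V_1$, pick $\sigma$ with $\sigma(u) = u_0$, so that $\sigma(w) = u_0 + \sigma(v)$ still lies in $u_0 + V_1$ with $q(\sigma(w)) = a$. Hence, fixing $u_0$ with $q(u_0) = 1$, there is some $w \in V_1 + V_1$ with $q(w) = a$ if and only if there is $v \in V_1$ with $q(u_0 + v) = a$. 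Expanding $q(u_0 + v) = q(u_0) + q(v) + b_q(u_0,v) = 2 + b_q(u_0, v)$ turns this into: for which $a$ is there $v$ with $q(v) = 1$ and $b_q(u_0, v) = a - 2$? This is the intersection of the quadric $V_1$ with an affine hyperplane, the same geometry counted in \Cref{lem:NumberOfSums}, except that here I only need to decide nonemptiness.

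First I would treat $\cha(F) \neq 2$. Choosing an orthogonal basis $u_0, e$ with $q(u_0) = 1$ and $q(e) = d$ gives $q \cong \qf{1, d}$ with $d = \det(q)$ modulo squares. Writing $v = x u_0 + y e$, the linear constraint $b_q(u_0, v) = 2x = a - 2$ fixes $x = (a-2)/2$, and $q(v) = x^2 + d y^2 = 1$ is solvable in $y$ if and only if $d y^2 = 1 - x^2 = (4a - a^2)/4$ has a solution. As $4$ is a nonzero square, this holds precisely when $\det(q)(4a - a^2)$ is a square, with $0$ counted as a square (covering $a = 4$, i.e.\ $w = 2u_0$). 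This is the asserted criterion in odd characteristic.

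The characteristic $2$ case is the heart of the argument, since there $b_q$ is alternating and no orthogonal basis exists. Instead I would pick $e$ with $b_q(u_0, e) = 1$, possible because $b_q$ is nondegenerate alternating; then $q \cong [1, c]$ with $c = \Delta(q)$ the Arf invariant. Now $q(u_0) + q(v) = 1 + 1 = 0$, so for $v = x u_0 + y e$ one computes $q(u_0 + v) = b_q(u_0, v) = y$; the target value thus forces $y = a$, and the remaining constraint $q(v) = 1$ becomes $x^2 + a x + (\Delta(q) a^2 - 1) = 0$. Solvability of this quadratic in $x$ is governed by the Artin--Schreier map: the substitution $x = a z$ followed by division by $a^2$ yields $\wp(z) = \Delta(q) + a^{-2}$, so a solution exists if and only if $\Delta(q) + a^{-2} \in \wp(F)$.

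The one genuine subtlety, which I would flag as the main obstacle, is reconciling this with the stated condition $\Delta(q) + \tfrac1a \in \wp(F)$: the computation naturally produces $a^{-2}$, not $a^{-1}$. Over the finite fields that are the focus of the paper the two agree, because membership in $\wp(F)$ is detected by the absolute trace to $\F_2$ and the Frobenius preserves traces, so $\mathrm{Tr}(a^{-2}) = \mathrm{Tr}(a^{-1})$ and hence $\Delta(q) + a^{-2} \in \wp(F)$ if and only if $\Delta(q) + a^{-1} \in \wp(F)$; equivalently, over a perfect field $\wp(F)$ is stable under the bijective Frobenius. I would make sure this trace (or perfectness) identity is spelled out explicitly, since it is the only place where the passage from the $a^{-2}$ falling out of the computation to the $a^{-1}$ of the statement is invoked, and over a general field of characteristic $2$ one should keep the $a^{-2}$ form.
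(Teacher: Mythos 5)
Your reduction via \Cref{cor:OGtransitive} and the explicit computations in both characteristics are correct, and your route is a genuine (mild) variant of the paper's: the paper takes the two summands $u,v$ themselves as a basis of $V$, reads off the Gram matrix resp.\ the form $[1,\tfrac{1}{a^2}]$, hence the determinant resp.\ Arf invariant, and then handles the converse separately by reconstructing $q$ from its invariants; you instead fix one summand by transitivity of $\og(q)$, pass to a normal form $\qf{1,d}$ resp.\ $[1,\Delta(q)]$, and solve the resulting quadratic equation in coordinates, which yields both directions in a single chain of equivalences. That is a sound alternative, and arguably cleaner in that no separate converse argument is needed.

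However, what you flag as ``the one genuine subtlety'' and ``the main obstacle'' is not an obstacle at all, and your resolution of it is wrong in general. Over \emph{any} field of characteristic $2$ one has
\[
a^{-2} + a^{-1} = \bigl(a^{-1}\bigr)^2 + a^{-1} = \wp\bigl(a^{-1}\bigr) \in \wp(F),
\]
and $\wp$ is additive in characteristic $2$, so $\wp(F)$ is a subgroup of $(F,+)$; hence $\Delta(q) + a^{-2} \in \wp(F)$ if and only if $\Delta(q) + a^{-1} \in \wp(F)$, with no trace or perfectness hypothesis whatsoever. This is exactly the identity the paper uses when it writes $\tfrac{1}{a^2} = \tfrac1a \in F/\wp(F)$. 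Your trace argument is valid for finite fields, but your concluding claim that ``over a general field of characteristic $2$ one should keep the $a^{-2}$ form'' is false, and as written it leaves the proposition---which is stated for arbitrary fields---unproven over imperfect fields of characteristic $2$. Replacing that final paragraph with the one-line observation above completes your proof.
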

\begin{proof}
    Let $u, v \in V_1$ such that $w = u + v$ with $q(w) = a$.
    If $v \in \{u, -u\}$, the assertion is clear.
    Otherwise $u, v$ are linearly independent and thus form a basis and we have $b_q(u, v) = a - 2$.
    
    If $\cha(F) \neq 2$, $q$ is isometric to the quadratic form $x \mapsto x^T A x$ with
    \begin{equation} \label{eq:exp:q:1}
        A = \begin{pmatrix} 1 & \tfrac{a-2}{2} \\ \tfrac{a-2}{2} & 1 \end{pmatrix} \quad \text{with }\det(q) = \det(A) = \frac{1}{4} \cdot (4a - a^2),
    \end{equation}
    so $\det(q) \cdot (4a - a^2)$ is a square.
    
    Now let $\cha(F) = 2$.
    Recall that $a - 2 = a \neq 0$, so $b_q(u, \frac1{a}\cdot v) = 1$ and we have
    \begin{equation}\label{eq:exp:q:2}
        q \cong\left[q(u), q(\tfrac1{a}\cdot v)\right] = \left[1, \tfrac{1}{a^2}\right],
    \end{equation}
    which has Arf invariant $\frac{1}{a^2} = \frac1a \in F/\wp(F)$.

    For the reverse direction, the case $a = 4$ is clear. 
    If $a \neq 4$, we can reverse the arguments: 
    The determinant / Arf invariant of $q$ is determined by the given condition and since we assume that $q$ to represents $1$, the isometry type of $q$ is determined uniquely as $\qf{1, \det(q)}$ / $[1, \Delta(q)]$.
    Then, $q$ is isometric to one of the forms \eqref{eq:exp:q:1} / \eqref{eq:exp:q:2} and a vector $w$ as desired can be constructed easily.
\end{proof}

\subsection{Finishing the proof} \label{sec:proof}
We start with a simple consequence of \Cref{lem:NumberOfSums}.

\begin{lemma} \label{lem:DiamLowerBound}
    Let $F$ be a finite field with $|F| \geq 7$ and let $q$ be an anisotropic quadratic form of dimension 2 over $F$ and let $a \in F^\ast$.
    Then $\diam(\Gcal_{q, a}) \geq 3$.
\end{lemma}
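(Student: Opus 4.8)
The plan is to show that not every vertex lies within distance $2$ of the origin; by \Cref{lem:symmetryreduction} \ref{lem:symmetryreduction:3} this immediately gives $\diam(\Gcal_{q,a}) \geq 3$. I would argue by a counting estimate. The set of vertices at distance at most $2$ from $0$ is exactly $\{0\} \cup V_a \cup (V_a + V_a)$: the neighbours of $0$ are precisely the vectors $u$ with $q(u) = a$, i.e. $V_a$, and a vertex $w$ at distance $2$ admits a path $0 - u - w$ with $q(u) = a$ and $q(w-u) = a$, hence $w = u + (w-u) \in V_a + V_a$; conversely any $w = u + u' \in V_a + V_a$ with $w \notin \{0,u\}$ is reached by such a path. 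Since $q$ is anisotropic we have $V_0 = \{0\}$, and for $v \in V_a$ we have $v + (-v) = 0 \in V_a + V_a$, so this set equals $V_a \cup (V_a + V_a)$, whose cardinality is at most $|V_a| + |V_a + V_a|$.

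Next I would insert the exact counts. As $\dim(q) = 2$ and $q$ is anisotropic (hence not hyperbolic, with $m = 1$), \Cref{prop:NumberOfRepresentations} gives $|V_a| = f + 1$. Setting $k = f+1$ as in \Cref{lem:NumberOfSums}, the $a = b$ row of \Cref{tbl:SizeVa+Vb} yields $|V_a + V_a| = \tfrac{k^2}{2} + 1$ if $\cha(F) \neq 2$ and $|V_a + V_a| = \tfrac{k(k-1)}{2} + 1$ if $\cha(F) = 2$.

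Finally I would verify the arithmetic against $|V| = f^2$. In odd characteristic the bound becomes $|V_a| + |V_a + V_a| = \tfrac{f^2 + 4f + 5}{2}$, which is strictly less than $f^2$ exactly when $(f-5)(f+1) > 0$, i.e. for $f \geq 7$. In characteristic $2$ the bound is $\tfrac{f^2 + 3f + 4}{2}$, strictly less than $f^2$ when $(f-4)(f+1) > 0$; since a field of characteristic $2$ with $f \geq 7$ has $f \geq 8$, this holds as well. In either case $V_a \cup (V_a + V_a) \subsetneq V$, so some vertex has distance at least $3$ from $0$, and the claim follows.

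The main obstacle is not conceptual but bookkeeping: one must correctly read off the $a = b$ entries of \Cref{tbl:SizeVa+Vb} for the anisotropic value $k = f+1$ in both characteristics, and check that the crude union bound — which discards the overlap $V_a \cap (V_a + V_a)$ — is already sharp enough to beat $f^2$ for every $f \geq 7$. It is worth noting that the estimate degenerates to equality at $f = 5$ (odd characteristic) and at $f \in \{2,4\}$ (characteristic $2$), which is precisely why the hypothesis demands $f \geq 7$; the remaining small anisotropic fields are instead dispatched by the explicit computations in \Cref{sec:first:examples}.
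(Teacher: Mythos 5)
Your proposal is correct and takes essentially the same route as the paper's proof: both bound the set of vertices at distance at most $2$ from the origin by $|V_a| + |V_a + V_a|$ using \Cref{prop:NumberOfRepresentations} and \Cref{lem:NumberOfSums}, and verify that this is strictly smaller than $|V| = f^2$ once $f \geq 7$ (the paper merely uses the single characteristic-independent bound $|V_a + V_a| \leq \tfrac{(f+1)^2}{2} + 1$ instead of splitting into the two exact counts, which changes nothing). The only blemish is in your closing aside: in characteristic $2$ with $f = 2$ the estimate gives $\tfrac{f^2+3f+4}{2} = 7 > 4 = f^2$, so the bound is not tight there and equality occurs only at $f = 4$, not at $f \in \{2,4\}$.
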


\begin{proof}
    Let $f = |F|$. 
    We have $|V_a| = f+1$ and by \Cref{lem:NumberOfSums} $|V_a + V_a| \leq \frac{(f+1)^2}{2}+1$.
    The union of these sets is precisely the set of vertices with distance at most $2$ to the origin, but it contains at most $\frac{f^2 + 4f + 5}{2}$ elements, which is strictly less than $|V| = f^2$ if $f \geq 7$.
\end{proof}

It is clear that over finite fields every anisotropic form is of dimension $1$ or $2$.
For $1$-dimensional forms the discussion in \Cref{sec:first:examples} already gives complete information about both the connectedness and the diameter of the associated representation graphs.
We will therefore discuss the case of $2$-dimensional anisotropic forms.

\begin{lemma}\label{lem:diamLEQ3}
    Let $F$ be a finite field and let $q$ be an anisotropic quadratic form of dimension $2$ over $F$ and let $a \in F^\ast$.
    Then $\diam(\Gcal_{q, a}) \leq 3$. 
\end{lemma}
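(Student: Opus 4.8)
The plan is to reduce everything to a statement about the finitely many \emph{values} a vertex can carry, and then, for each target value, to find a neighbour of the corresponding vertex that already lies in the ball of radius $2$. Since every $\alpha \in F^\ast$ is a similarity factor over a finite field, I may assume $a = 1$. As $q$ is anisotropic of dimension $2$ it is universal, so $\Dtilde(q) = \D(q) = F^\ast$, and by \Cref{lem:symmetryreduction} \ref{lem:symmetryreduction:3} it suffices to bound $\dist(0, v_b)$ by $3$ for every $b \in F^\ast$, where $q(v_b) = b$. The case $b = 1$ is immediate, since then $v_b \in V_1$ has distance $1$; so from now on I fix $b \neq 1$ and a vector $v_b$ with $q(v_b) = b$.

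The decisive point is that, by \Cref{lem:symmetryreduction} \ref{lem:symmetryreduction:2}, being at distance $\leq 2$ from the origin is a property of the $q$-value alone. I therefore set
\[D_2 = \{q(x) \mid x \neq 0,\ \dist(0,x) \leq 2\} \subseteq F^\ast,\]
where membership in $F^\ast$ uses anisotropy, i.e.\ $q(x) \neq 0$ for $x \neq 0$. The neighbours of $v_b$ are exactly the vectors $v_b - s$ with $s \in V_1$, so I put
\[A_b = \{q(v_b - s) \mid s \in V_1\} \subseteq F^\ast,\]
which again lands in $F^\ast$ because $q$ is anisotropic and $v_b \notin V_1$. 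If $A_b \cap D_2 \neq \emptyset$, say $q(v_b - s) \in D_2$, then $v_b - s$ has distance $\leq 2$ from $0$ and is adjacent to $v_b$, whence $\dist(0, v_b) \leq 3$. Thus the whole lemma reduces to showing $A_b \cap D_2 \neq \emptyset$, and since both sets live inside $F^\ast$ (of size $f - 1$), it is enough to prove the cardinality bound $|A_b| + |D_2| > f - 1$.

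Both cardinalities can be estimated with the tools already in place. For $A_b$, the identity $q(v_b - s) = b + 1 - b_q(v_b, s)$ shows that $|A_b|$ equals the number of values taken by $s \mapsto b_q(v_b, s)$ on $V_1$; each fibre is cut out by one quadratic and one linear equation, so the B\'ezout bound of \Cref{lem:NumberOfSums} \ref{lem:NumberOfSums1} gives at most two vectors per value, and hence $|A_b| \geq |V_1|/2 = (f+1)/2$ by \Cref{prop:NumberOfRepresentations}. For $D_2$, transitivity of $\og(q)$ on each level set (\Cref{cor:OGtransitive}) makes $V_1 + V_1$ a union of full level sets, each of size $|V_c| = f+1$, together with the single point $0$; dividing the size of $V_1 + V_1$ from \Cref{lem:NumberOfSums} \ref{lem:NumberOfSums4} by $f+1$ then counts the realizable nonzero distance-$2$ values as $(f+1)/2$ if $\cha(F) \neq 2$ and $f/2$ if $\cha(F) = 2$, so $|D_2| \geq f/2$ in any case. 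Combining, $|A_b| + |D_2| \geq f + 1 > f - 1$ (using in characteristic $2$ that the integer $|A_b|$ is at least $\lceil (f+1)/2\rceil = f/2 + 1$), which forces the intersection to be nonempty and completes the argument.

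The genuine content, and where I expect the bookkeeping to be most delicate, is the count for $D_2$: one must verify that the exact division by $f+1$ is legitimate and that anisotropy is truly what makes $D_2$ as large as $\approx f/2$. This is precisely the feature distinguishing the present case from the hyperbolic one, where the analogous counts are smaller and the radius-$2$ ball does not fill $F^\ast$, allowing the diameter to reach $4$. The only further care needed is to confirm that the integer rounding preserves the strict inequality $|A_b| + |D_2| > f - 1$ for even $f$, and that the estimate degenerates harmlessly over the smallest fields, where the diameter is in fact even smaller than $3$.
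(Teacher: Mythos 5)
Your argument is correct and is essentially the paper's own proof: both reduce to $a=1$, use the transitivity of $\og(q)$ (\Cref{cor:OGtransitive}, \Cref{lem:symmetryreduction}) so that distance from the origin depends only on the represented value, and then force an intersection of the radius-$2$ ball with the neighbourhood of the target level set by a cardinality count drawn from \Cref{lem:NumberOfSums}. The only difference is where the pigeonhole happens: the paper shows $(V_x+V_1)\cap(V_1+V_1)\neq\emptyset$ directly inside $V$, using only \Cref{lem:NumberOfSums} \ref{lem:NumberOfSums4} together with $|V|=f^2$, whereas you intersect the corresponding value sets inside $F^\ast$, which additionally requires the fiber bound $|A_b|\geq\tfrac{f+1}{2}$ via \Cref{lem:NumberOfSums} \ref{lem:NumberOfSums1}, the observation that $(V_1+V_1)\setminus\{0\}$ is a union of full level sets, and the integrality adjustment in characteristic $2$ --- all of which you handle correctly.
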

\begin{proof}
    We may assume $a = 1$.
    Let $f = |F|$. 
    Let $v$ be such that $\dist(0,v) \geq 3$ and write $x = q(v) \in F\setminus\{0,1\}$.
    Since $\og(q)$ acts transitively on $V_x$ by \Cref{cor:OGtransitive}, it is enough to show that one vector $v'$ with $q(v') = x$ can be reached via a path of length 3 from the origin.    
    The existence of such a $v'$ follows from the observation that the set of points adjacent to points in $V_x$ intersects $V_1 + V_1$:
    By \Cref{lem:NumberOfSums} \ref{lem:NumberOfSums4}, we have $|V_x + V_1| \geq \frac{(f+1)^2}{2}$ and $|V_1 + V_1| \geq \frac{(f+1)\cdot f}2 +1$.
    Both sets are subsets of $V$ and $|V| = f^2$, so we have $(V_x + V_1) \cap (V_1 + V_1) \neq \emptyset$.
\end{proof}

From now on, we will restrict our attention to isotropic forms.
Quadratic forms over finite fields of dimension at least 3 are isotropic and by \Cref{cor:BoundIsotropicDiameter} the diameter of an associated representation graph is bounded by that of the graph of the corresponding hyperbolic plane.
It will now turn out that this can be strengthened significantly.

\begin{lemma} \label{lem:diameter:dim3}
    Let $q$ be a quadratic form of dimension $3$ over a finite field $F$ and $a \in F^\ast$.
    For $v \in V$ we have $\dist_{\Gcal_{q, a}}(0, v) > 2$ if and only if $v$ is isotropic and $\det(q) \neq -a$.
    In particular, if $|F| \geq 5$, we have
    \[\diam(\Gcal_{q, a}) = \begin{cases}
        2, &\text{ if $\det(q) = -a$}\\
        3, &\text{ else.}
    \end{cases}\]
\end{lemma}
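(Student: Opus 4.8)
The plan is to reduce everything, via \Cref{lem:symmetryreduction}, to computing $\dist(0,v)$ as a function of the value $q(v)$ alone, and then to analyze the two regimes $q(v) \neq 0$ and $q(v) = 0$ separately through the orthogonal complement $v^\perp$. First I would record that by \Cref{thm:ClassificationQFFIniteFields} there are no non-degenerate odd-dimensional forms in characteristic $2$, so necessarily $\cha(F) \neq 2$; moreover $q$ is isotropic and, having dimension $\geq 2$, universal by \Cref{rem:finite_field_remarks}. A vertex $v \neq 0$ satisfies $\dist(0,v) \leq 2$ precisely when $v \in V_a$ or $v = u + w$ for some $u,w \in V_a$, so the entire statement is about which values $q(v)$ admit such a decomposition.

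For the anisotropic case $q(v) = b \neq 0$ I would give a direct construction: writing $u = \tfrac12 v + p$ with $p \in v^\perp$ forces $b_q(v,u) = b$ and hence $q(v-u) = q(u)$, so that $q(u) = a$ reduces to solving $q(p) = a - \tfrac{b}{4}$ in the $2$-dimensional non-degenerate form $q|_{v^\perp}$. Since $q|_{v^\perp}$ is universal, such a $p$ exists (taking $p = 0$ when $a = \tfrac b4$), giving $\dist(0,v) \leq 2$ for every anisotropic $v$, over any finite field of odd characteristic.

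The heart of the argument is the isotropic case $v \neq 0$, $q(v) = 0$, where I would prove the equivalence $\dist(0,v) \leq 2 \iff a \in \D(q|_{v^\perp})$. For the forward direction, from $v = u + w$ with $u,w \in V_a$ one computes $b_q(u,w) = -2a$, whence $v$ is orthogonal to both $u$ and $w$; since $u,w$ must be linearly independent (in odd characteristic $w = \pm u$ forces $v = 0$ or $q(v) \neq 0$), the plane $\spn(u,w)$ is $2$-dimensional, contained in $v^\perp$, and therefore equals $v^\perp$, so $u \in v^\perp$ with $q(u) = a$. The converse is immediate, since for $u \in v^\perp \cap V_a$ the vector $w = v - u$ again lies in $V_a$ using $b_q(v,u) = 0 = q(v)$. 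It then remains to evaluate $\D(q|_{v^\perp})$: because $v$ is isotropic it lies in the radical of the degenerate form $q|_{v^\perp}$, so $q|_{v^\perp} \cong \qf 0 \perp \qf d$ and $\D(q|_{v^\perp}) = d\,F^{\ast 2}$. Completing $v$ to a hyperbolic pair and splitting off the orthogonal $1$-dimensional complement identifies $d = -\det(q)$, so $a \in \D(q|_{v^\perp})$ becomes $-a\det(q) \in F^{\ast 2}$, that is, $\det(q) = -a$ in $F^\ast / F^{\ast 2}$. This establishes the claimed equivalence over every finite field.

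Finally, for the diameter when $|F| \geq 5$: if $\det(q) = -a$ all vertices lie within distance $2$, and a nonzero isotropic vector (which exists since $|V_0| = f^2 > 1$) realizes distance exactly $2$, so $\diam(\Gcal_{q,a}) = 2$. If $\det(q) \neq -a$, anisotropic vertices are at distance $\leq 2$ while isotropic ones are at distance $> 2$; the matching upper bound comes from embedding each isotropic $v$ into a hyperbolic subplane via \Cref{lem:VectorsInHyperbolicSpaces} and applying \Cref{prop:hyp:plane:connected} (equivalently \Cref{cor:BoundIsotropicDiameter}), yielding $\dist(0,v) \leq 3$, hence $\dist(0,v) = 3$ and $\diam(\Gcal_{q,a}) = 3$. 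I expect the main obstacle to be the isotropic case: pinning down that the spanning plane must coincide with $v^\perp$ and correctly computing $d = -\det(q)$ as a square class, together with the care needed to read the condition $\det(q) = -a$ as an identity in $F^\ast/F^{\ast 2}$.
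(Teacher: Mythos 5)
Your proof is correct and follows essentially the same route as the paper: the same midpoint construction $\tfrac12 v \pm p$ with $p \in v^\perp$ for anisotropic $v$, the same reduction of the isotropic case to whether $a$ is represented by $q|_{v^\perp} \cong \qf{0,-\det(q)}$ (your orthogonality argument $b_q(v,u)=0$ is just the coordinate-free version of the paper's hyperbolic-pair computation forcing the $w$-coefficient $\lambda$ to vanish), and the same appeal to \Cref{prop:hyp:plane:connected} for $\dist(0,v)\leq 3$ when $|F|\geq 5$. One small slip: the parenthetical ``equivalently \Cref{cor:BoundIsotropicDiameter}'' is not accurate, since that corollary only gives $\dist(0,v)\leq \diam(\Gcal_{\bH,a})\leq 4$; you genuinely need the finer statement of \Cref{prop:hyp:plane:connected} about isotropic vectors, which is what your main clause correctly invokes.
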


\begin{proof}
    Let first $v$ be an anisotropic vector, i.e. $b := q(v) \neq 0$.
    Since $\dim(q) =3$ we necessarily have $\cha(F) \neq 2$, so $q|_{\spn(v)}$ is non-degenerate and therefore is a subform of $q$.
    This means that we can extend $v$ to a suitable basis of $V$ in which $q$ is isometric to $\qf b \perp \bH$. 
    Since $\Dtilde(\bH) = F$, there exists some vector $w \in v^\perp $ with $q(w) = a - \frac b4 = q(-w)$.
    Then $\tfrac{1}{2}v \pm w \in V_a$ and 
    \[v = \left(\frac12 v + w\right) + \left(\frac12 v - w\right) \in V_a + V_a.\]
    
    Let now $v \neq 0$ be an isotropic vector.
    Similar to above, we extend $v$ to a basis $(v, w, u)$ of $V$ such that in this basis, $q$ is isometric to $\bH \perp \qf c$ for some $c \in F^\ast$ and such that $q(v) = 0 = q(w)$ and $b_q(v, w) = 1$ (i.e. $v, w$ is a \emph{hyperbolic pair}).
    If $c \in aF^{\ast2}$, we may assume that $c = a$. 
    In this case we can write $v = (v + u) - u \in V_a + V_a$.
    
    Assume $c \not\in aF^{\ast2}$ and $v = x + y$ with $x,y \in V_a$. 
    Since $v = v + 0w + 0u$ we have
    \[ x = \alpha v + \lambda w + \mu u \quad \text{and} \quad y =\beta v - \lambda w - \mu u \]
    with $\alpha, \beta, \lambda, \mu \in F$ and $\alpha + \beta =1$.
    Now $\lambda \neq 0$, since $q(\alpha v + \mu u) = \mu^2 c \not\in aF^{\ast2}$ by assumption.
    But
    \[ \alpha\lambda + \mu^2c = q(x)= a = q(y) = -\beta\lambda + \mu^2 c\]
    implies $0 = \lambda\cdot(\alpha + \beta) = \lambda$, a contradiction.

    If $|F| \geq 5$ we know $\dist(0,v) \leq 3$ for isotropic vectors by \Cref{prop:hyp:plane:connected}.
    Together with the above, this proves the additional claim.
\end{proof}

Having dealt with forms of dimension at most $3$, we are left to consider forms of dimension at least $4$.

\begin{lemma} \label{lem:diameter:dim4}
    Let $q$ be a quadratic form of dimension at least 4 over a finite field and $a \in \D(q)$. 
    Then $\diam(\Gcal_{q, a}) = 2$.
\end{lemma}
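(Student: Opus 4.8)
The plan is to establish the two bounds $\diam(\Gcal_{q,a}) \geq 2$ and $\diam(\Gcal_{q,a}) \leq 2$ separately. The lower bound is immediate: since $\dim(q) \geq 4 \geq 3$, the form $q$ is isotropic over the finite field $F$, so there is a nonzero isotropic vector $v$; because $q(0 - v) = q(v) = 0 \neq a$, the vertices $0$ and $v$ are non-adjacent, whence $\dist(0,v) \geq 2$ and $\diam \geq 2$. The real content is the upper bound. By \Cref{lem:symmetryreduction} \ref{lem:symmetryreduction:3} it suffices to exhibit, for each $b \in \Dtilde(q)$, one vector $v$ with $q(v) = b$ and $\dist(0,v) \leq 2$. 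Here $\dim(q) \geq 2$ forces $q$ to be universal by \Cref{rem:finite_field_remarks} \ref{rem:finite_field_remarks2} and $\dim(q) \geq 3$ forces it to be isotropic, so $\Dtilde(q) = F$ and $b$ must range over all of $F$; note also that $a \in \D(q)$ gives $a \neq 0$.

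To produce such short paths I would exploit the Witt decomposition. As $q$ is isotropic and non-degenerate I may write $q \cong \bH \perp r$ with $\dim(r) = \dim(q) - 2 \geq 2$, so that $r$ is again universal. Fix a hyperbolic pair $e_1, e_2$ spanning the $\bH$-summand, i.e. $q(e_1) = q(e_2) = 0$ and $b_q(e_1, e_2) = 1$, and, using universality of $r$, fix a vector $p$ in the $r$-summand with $q(p) = a$; in particular $p$ is orthogonal to both $e_1$ and $e_2$. Given a target value $b \in F$, set $v = b e_1 + e_2$, so that $q(v) = b$.

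The key claim is that $0,\ p + e_2,\ v$ is a path of length $2$ in $\Gcal_{q,a}$. Indeed $q(p + e_2) = q(p) + q(e_2) + b_q(p, e_2) = a$, so $\{0, p + e_2\}$ is an edge, and since $p \perp e_1$ a direct computation gives $q(v - (p + e_2)) = q(b e_1 - p) = q(p) = a$, so $\{p + e_2, v\}$ is an edge as well. The three vertices are distinct: $q(p + e_2) = a \neq 0$ excludes $p + e_2 = 0$, the equality $q(v - (p + e_2)) = a \neq 0$ excludes $p + e_2 = v$, and $v \neq 0$ is clear from its $e_2$-component. This argument is uniform in every characteristic and valid for every $b$, including $b = 0$ (where $v = e_2$ is isotropic), so $\dist(0, v) \leq 2$ for all $b \in F$ and therefore $\diam(\Gcal_{q,a}) \leq 2$.

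The place demanding the most care is the bookkeeping of the reduction rather than any single computation: one must confirm that $\Dtilde(q) = F$ so that $b$ truly exhausts all values, and that $a \neq 0$ so that distinctness of the path vertices follows merely from the nonvanishing of $q$ on the relevant differences. A more pedestrian route — trying to realize every inner product $b_q(u_1, u_2) = b - 2a$ with $u_1, u_2 \in V_a$ through the counting estimates of \Cref{lem:NumberOfSums} together with \Cref{prop:NumberOfRepresentations} — would run into an awkward characteristic-$2$ subcase coming from an anisotropic binary summand over $\F_2$, and the explicit hyperbolic construction above is precisely what sidesteps this difficulty.
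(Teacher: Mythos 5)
Your proof is correct and follows essentially the same route as the paper: both split off a hyperbolic plane, use universality of the at least $2$-dimensional orthogonal complement to obtain a vector of value $a$ orthogonal to that plane, and exhibit the length-$2$ path from $0$ to $v$ by writing $v$ as a sum of two isotropic vectors and shifting by that orthogonal vector. The only difference is bookkeeping: the paper moves a hyperbolic plane to contain each given $v$ (via \Cref{lem:VectorsInHyperbolicSpaces}), whereas you normalize $v$ to the representative $b e_1 + e_2$ inside one fixed plane via \Cref{lem:symmetryreduction}; both reductions ultimately rest on Witt's extension theorem, so the content is the same.
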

\begin{proof}
    Since $V_a \subsetneq V$, we have $\diam(\Gcal_{q, a}) \geq 2$.
    Now let $v \in V$ with $q(v) = x \neq a$.
    By \Cref{thm:ClassificationQFFIniteFields}, $q$ is isotropic, so there exists a subspace $W$ with $v \in W \subseteq V$ of dimension $2$ such that $q|_W\cong\bH$ by \Cref{lem:VectorsInHyperbolicSpaces}.
    As in the proof of \Cref{prop:isotropic:diameter}, there are isotropic vectors $w_1, w_2 \in W$ such that $v = w_1 + w_2$.
    
    Now $\dim(W^\perp) \geq 2$ and forms of dimension at least 2 over finite fields are universal, so there is some $u \in W^\perp$ such that $q(u) = a = q(-u)$.
    Then
    \[v = w_1 + w_2 = (w_1 + u) + (w_2 - u) \in V_a + V_a\]
    proves the lemma.
\end{proof}

\begin{remark}
    In the proofs of \Cref{lem:diameter:dim3} and \Cref{lem:diameter:dim4}, we relied on the classification of quadratic forms over finite fields. 
    In fact, we only needed the property that over finite fields, every form of dimension at least 3 is isotropic, or equivalently, that every form of dimension $2$ is universal (we used both characterizations in the proof).
    In quadratic form theory, this behavior is expressed using the \emph{$u$-invariant} of a field $F$, which in its most traditional form is defined as
    \[u(F)=\sup\{\dim(q)\mid q\text{ is anisotropic}\} \in \N \cup \{\infty\}.\]
    According to the above, the statements of \Cref{lem:diameter:dim3} and \Cref{lem:diameter:dim4} apply to all fields $F$ with $|F| \geq 5$ and $u(F) = 2$.
\end{remark}

\section{Girth} \label{sec:girth}

\subsection{Determining the Girth}

We will now determine the \emph{girth} i.e. the minimum length of a cycle in the representation graphs $\Gcal_{q, a}$ for non-degenerate quadratic forms over finite fields.
We will see that, with few exceptions, this graph invariant is almost always given by 3.
This motivates us in particular to characterize triangles, and we can do this in an algebraic way.
\begin{lemma} \label{lem:triangle}
    Let $q$ be a quadratic form over $F$ and $a \in \Dtilde(q)$. 
    Let $v,w \in V_a \setminus \{0\}$ distinct and $W = \spn(v,w)$.
    If $a=0$, then $0,v,w$ form a triangle in $\Gcal_{q,0}$ if and only if $W$ is totally isotropic.
    
    If $a\neq 0$, then there exists a triangle in $\Gcal_{q,a}$ with vertices in $W$ if and only if
    \begin{align*}
        q|_{W} &\cong [a, a^{-1}]\\
        \text{or} \quad q|_{W} &\cong \qf{a} \text{ and } \cha(F) = 3.
    \end{align*}
\end{lemma}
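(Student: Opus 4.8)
Here the hypotheses already give $q(v) = q(w) = 0$, so two of the three edges of $\{0,v,w\}$ are present and the triangle exists precisely when $q(v-w)=0$. Since $q(v-w)=q(v)+q(w)-b_q(v,w)=-b_q(v,w)$, this is equivalent to $b_q(v,w)=0$. I would then invoke the identity $q(\alpha v+\beta w)=\alpha^2 q(v)+\beta^2 q(w)+\alpha\beta\,b_q(v,w)=\alpha\beta\,b_q(v,w)$, valid for all $\alpha,\beta\in F$, which shows that $b_q(v,w)=0$ holds if and only if $q$ vanishes identically on $W=\spn(v,w)$, i.e.\ $W$ is totally isotropic. (If $v,w$ are linearly dependent the same identity shows both conditions hold automatically.)

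\textbf{Reduction for $a\neq 0$.}
The plan is to translate one vertex of a prospective triangle to the origin, which is harmless inside the subspace $W$. A triangle with vertices $P_1,P_2,P_3\in W$ corresponds, on setting $d_1=P_2-P_1$ and $d_2=P_3-P_2\in W$, to a pair $d_1,d_2\in W$ with
\[ q(d_1)=q(d_2)=q(d_1+d_2)=a, \]
and conversely such a pair produces the triangle $\{0,d_1,d_1+d_2\}\subseteq W$, whose vertices are distinct because $q=a\neq 0$ forces $d_1,d_2,d_1+d_2\neq 0$. Expanding $q(d_1+d_2)=2a+b_q(d_1,d_2)$, the existence of a triangle in $W$ is therefore equivalent to the existence of $d_1,d_2\in W$ with $q(d_1)=q(d_2)=a$ and $b_q(d_1,d_2)=-a$. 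The entire analysis hinges on whether such a pair can be chosen linearly independent, which is what decides whether the triangle spans a two-dimensional $W$ or merely a line.

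\textbf{Constructing a triangle ($\Leftarrow$).}
I would treat the two listed alternatives separately. If $q|_W\cong[a,a^{-1}]$, this binary form represents $a$ and has determinant $3$ modulo squares; the form $a(x^2-xy+y^2)$ also represents $a$ and has the same determinant (note $-xy=xy$ in characteristic $2$, so this single model covers both characteristics), so the two are isometric. In the latter model the basis vectors $d_1,d_2$ satisfy $q(d_1)=q(d_2)=a$ and $b_q(d_1,d_2)=-a$ by inspection, yielding a triangle via the reduction above. If instead $q|_W\cong\qf a$ and $\cha(F)=3$, I would pick $v\in W$ with $q(v)=a$ and use $q(2v)=4a=a$ in characteristic $3$ to see that $\{0,v,-v\}$ is a triangle (the degenerate case $d_1=d_2=v$).

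\textbf{Forcing the isometry type ($\Rightarrow$), and the main obstacle.}
Starting from $d_1,d_2\in W$ with $q(d_1)=q(d_2)=a$ and $b_q(d_1,d_2)=-a$, I would split on linear (in)dependence. If $d_1,d_2$ are independent they form a basis in which $q|_W$ equals $a(x^2-xy+y^2)\cong[a,a^{-1}]$, as above. If $d_1=\lambda d_2$, then $q(d_1)=\lambda^2 a=a$ gives $\lambda=\pm1$, while $b_q(d_1,d_2)=2\lambda a=-a$ forces $2\lambda=-1$; these are compatible only for $\lambda=1$ with $3=0$, placing us in characteristic $3$ with $q|_{\spn(d_1)}\cong\qf a$. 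The main obstacle is precisely the characteristic $3$ bookkeeping: there $[a,a^{-1}]$ degenerates (one checks $a\,[a,a^{-1}]=(ax+2y)^2$, so $[a,a^{-1}]\cong\qf a\perp\qf 0$), the linearly dependent solutions appear, and one must track whether a given triangle spans a two-dimensional $W$ (the degenerate $[a,a^{-1}]$ case) or only a line (the $\qf a$ case) to land in the correct alternative. Throughout, \Cref{cor:OGtransitive} and the translation invariance of \Cref{lem:symmetryreduction} let me fix convenient representatives and move freely between a triangle and its translate to the origin.
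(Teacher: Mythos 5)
Your proof is correct and follows essentially the same route as the paper: both reduce a (translated) triangle to the algebraic conditions $q(x)=q(y)=a$ plus a bilinear-form identity, read off $q|_W\cong a(x^2-xy+y^2)\cong[a,a^{-1}]$ from a linearly independent pair and $\qf{a}$ with $\cha(F)=3$ from a dependent one, and conversely exhibit explicit triangles in each case. Your edge-vector parametrization ($b_q(d_1,d_2)=-a$) versus the paper's vertex parametrization ($b_q(x,y)=a$) is purely cosmetic, and your closing remark about tracking whether the triangle spans $W$ or only a line is exactly the step the paper compresses into the phrase ``using a subset of $x,y$ as a basis of $W$.''
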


\begin{proof}
    The vectors $0, x, y$ form a triangle if and only if $a = q(x - y) = q(x) + q(y) - b_q(x, y)$, i.e. if and only if $b_q(x, y) = a$.
    With this information, if $0, x, y$ form a triangle in $W$, we can compute the isometry type of $q|_W$, depending on $\cha(F)$, using a subset $x, y$ as a basis of $W$.
    
    Conversely, if one of the conditions on the isometry type of $q$ and the characteristic of $F$ is satisfied, we can explicitly construct a triangle with vertices in $W$ as follows:    
    If $a=0$, we can choose $0,v,w$ and if $a \neq 0$, we can choose $0,x,ay$ and $0,x,-x$ respectively, as the vertices of the triangle.
\end{proof}

Note that if $\cha(F) \neq 2$ the forms $[a, a^{-1}]$ and $\qf{a,3a}$ are isometric, in the language of the latter form, for the basis $x,y$, we could also choose a triangle with vertices $0,x,\tfrac{1}{2}(x+y)$.

\begin{theorem} \label{thm:girth}
    Let $F$ be a finite field of cardinality $|F| = p^m$ with $p$ a prime, $m \in \N$ and $a\in\Dtilde(q)$, i.e. $\Gcal_{q,a}$ does not consist of isolated points. 
    Then the girth of $\Gcal_{q,a}$ is as given in \Cref{tbl:girth}.  
        \begin{table}[!ht]
            \renewcommand{\arraystretch}{1.25}
            \begin{tabular}{|c|c|c|c|c|}
                \hline
                cond. on $a$ & $\dim(q)$ & cond. on $F$ & cond. on $q$ & $\g(\Gcal_{q, a})$\\
                \hline \hline
                \multirow{2}{*}{$a = 0$} & \multirow{2}{*}{arbitrary} & \multicolumn{1}{c|}{$F = \F_2$} & \multicolumn{1}{c|}{$q \in \{\bH, \bH \perp [1, 1]\}$} & \multicolumn{1}{c|}{4}\\ \cline{3-5}
                & & \multicolumn{2}{c|}{otherwise} & \multicolumn{1}{c|}{$3$}\\
                \hline
                
                \multirow{6}{*}{$a \neq 0$} & $\dim(q) = 1$ & $\cha(F) \neq 2$ & none & $p$\\
                \cline{2-5}
                & \multirow{4}{*}{$\dim(q) = 2$} & $F = \F_2$ & $q \cong \bH$ & $\infty$ 
                \\
                \cline{3-5}
                & & $F = \F_3$ & none & 3\\
                \cline{3-5}
                & & none & $q \cong [a, a^{-1}]$ & 3\\
                \cline{3-5}
                & & \multicolumn{2}{c|}{otherwise} & 4
                \\
                \cline{2-5}
                & $\dim(q) \geq 3$ & none & none & 3
                \\
                \hline
            \end{tabular}
        \caption{Girth of representation graphs.}
        \label{tbl:girth}
    \end{table}
\end{theorem}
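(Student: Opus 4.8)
The plan is to reduce the girth computation to the existence of short cycles, governed by \Cref{lem:triangle} for triangles and by explicit four-cycle constructions otherwise. Since over a finite field every $a\in F^\ast$ is a similarity factor of $q$ (\Cref{rem:finite_field_remarks} \ref{rem:finite_field_remarks4}), I may assume $a=1$ whenever $a\neq 0$. The overall strategy is: first decide when a triangle exists, giving girth $3$; if none exists, either exhibit a four-cycle (girth $4$) or show the graph is a disjoint union of edges (girth $\infty$).

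For $a=0$, a triangle $0,v,w$ exists if and only if $\spn(v,w)$ is totally isotropic (\Cref{lem:triangle}). The key observation is that when $|F|\geq 3$ and $q$ is isotropic, any nonzero isotropic $v$ together with $\lambda v$ for $\lambda\neq 0,1$ already forms a triangle, since the whole line $\spn(v)$ is isotropic; hence the girth is $3$. Over $\F_2$ this collinear trick fails, and a totally isotropic plane exists if and only if the Witt index of $q$ is at least $2$; by \Cref{thm:ClassificationQFFIniteFields} the only isotropic forms over $\F_2$ of Witt index $1$ are $\bH$ and $\bH\perp[1,1]$. For these two there is no triangle, while the four-cycle $(0,0)-(1,0)-(1,1)-(0,1)-(0,0)$ inside an $\bH$ summand shows the girth is exactly $4$.

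For $a\neq 0$ and $\dim q\geq 3$ I claim the girth is always $3$, which by \Cref{lem:triangle} amounts to finding $x,y$ with $q(x)=q(y)=a$ and $b_q(x,y)=a$. In characteristic $3$ the degenerate triangle $0,x,-x$ with $q(x)=a$ (available by universality) does the job, since $q(2x)=4a=a$. In characteristic $\neq 2,3$ I split off $q\cong\qf a\perp q''$ using a vector of norm $a$; as $\dim q''\geq 2$ is universal it represents $3a$, producing a subform $\qf{a,3a}\cong[a,a^{-1}]$ and hence a triangle. In characteristic $2$, where $\dim q\geq 4$, the same conclusion follows once $[a,a^{-1}]$ is embedded as a binary subform; here one invokes \Cref{thm:ClassificationQFFIniteFields} and Witt's theorem, and this embedding is the main technical obstacle. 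The one-dimensional case is disposed of separately: for $q=\qf b$ the set $V_{q,a}$ equals $\{\pm c\}$, so $\Gcal_{q,a}=\operatorname{Cay}(F,\{\pm c\})$ is a disjoint union of $p$-cycles, giving girth $p$ (with $p\geq 3$ automatic, as odd dimension forces $\cha F\neq 2$).

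The genuinely delicate case is $\dim q=2$, $a\neq 0$, where all three finite values occur. By \Cref{lem:triangle} a triangle exists if and only if $q\cong[a,a^{-1}]$ or $\cha F=3$ (the latter again via $0,x,-x$, using that $[a,a^{-1}]$ is degenerate in characteristic $3$), accounting for the $\F_3$ row and the $q\cong[a,a^{-1}]$ row. If no triangle exists, I produce a parallelogram four-cycle $0,u,u+v,v$ from any two linearly independent $u,v\in V_a$; such a pair exists as soon as $|V_a|\geq 3$, which by \Cref{prop:NumberOfRepresentations} holds for every anisotropic binary form ($|V_a|=f+1$) and for hyperbolic planes with $|F|\geq 4$ ($|V_a|=f-1$). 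This yields girth $4$ in the remaining cases. The sole exception is $\bH$ over $\F_2$, where $V_a=\{(1,1)\}$, so the graph is a perfect matching with no cycle and girth $\infty$. Checking that these conditions partition exactly as in the table, in particular pinning down when $q\cong[a,a^{-1}]$ via the determinant resp. Arf invariant (\Cref{rem:finite_field_remarks} \ref{rem:finite_field_remarks1}), is the bookkeeping core of this case and, together with the characteristic-$2$ embedding above, constitutes the main work.
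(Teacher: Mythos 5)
You follow the paper's route almost step for step: \Cref{lem:triangle} to detect triangles, the parallelogram cycle $(0,u,u+v,v,0)$ built from two linearly independent vectors of $V_a$ for the upper bound $\g(\Gcal_{q,a})\leq 4$, separate hand treatment of $\dim(q)=1$ and of $\bH$ over $\F_2$, and an embedding of $[a,a^{-1}]$ to settle $\dim(q)\geq 3$. In two places you are in fact more careful than the paper: you note that $|V_a|\geq 3$ fails for $\bH$ over $\F_3$ (the paper asserts it in all remaining cases), and you treat characteristic $3$ in dimension $\geq 3$ by the degenerate triangle $(0,x,-x,0)$, whereas the paper's isometry $[a,a^{-1}]\perp\qf{\tfrac13\det(q)}\perp\dots$ silently divides by $3$. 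Two points, however, need attention.

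First, a genuine gap: in characteristic $2$, where $\dim(q)\geq 4$, you never actually produce $[a,a^{-1}]$ as a subform --- you defer it to \Cref{thm:ClassificationQFFIniteFields} and Witt's theorem and call it ``the main technical obstacle''. That is precisely the step where the paper does its remaining work, so it cannot be left as a pointer. The missing argument is short and should be written out: $[a,a^{-1}]$ has Arf invariant $a\cdot a^{-1}=1\in F/\wp(F)$, so by the classification it is either isometric to $\bH$ (when $1\in\wp(F)$) or to the unique anisotropic binary form, call it $q_0$. A non-degenerate form $q$ of dimension $\geq 4$ over a finite field contains $\bH$ because it is isotropic; and it contains $q_0$ as well, since either $q$ is, by the classification, an orthogonal sum of hyperbolic planes and one copy of $q_0$, or $q$ is hyperbolic, in which case $q_0\perp q_0$ has dimension $4$ and trivial Arf invariant, hence $q_0\perp q_0\cong 2\times\bH\subseteq q$. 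Without some such argument your case $\cha(F)=2$, $\dim(q)\geq 4$ is not proved.

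Second, your closing claim that your conditions ``partition exactly as in the table'' is false --- and, interestingly, the defect lies in the table, not in your mathematics. For $\dim(q)=2$, $a\neq 0$ you prove, exactly as \Cref{lem:triangle} dictates, that a triangle exists if and only if $q\cong[a,a^{-1}]$ or $\cha(F)=3$. Over $\F_9$, say, the triangle $(0,x,2x,0)$ with $q(x)=a$ exists because $q(2x)=4a=a$; but \Cref{tbl:girth} asserts girth $4$ there whenever $q\not\cong[a,a^{-1}]$, and in characteristic $3$ no non-degenerate form is isometric to $[a,a^{-1}]$ at all, since the polar form of $[a,a^{-1}]$ has Gram determinant $4-1=3=0$. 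So the table's row ``$F=\F_3$'' should read ``$\cha(F)=3$'', and your argument establishes the corrected statement while contradicting the printed one (as does the paper's own proof, which for dimension $2$ simply invokes \Cref{lem:triangle}). You should have flagged this discrepancy rather than asserting agreement: as written, the one incorrect sentence in your proposal is exactly the claim that it matches the table.
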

\begin{proof}
    For the case $a=0$ we observe that $q$ is isotropic, as $a \in \Dtilde(q)$.
    If $v$ is an isotropic vector, then $(0, v, \lambda v, 0)$ is a triangle if $\lambda \not\in\{ 0,1\}$.
    So only the case $F = \F_2$ remains. 
    For the forms $\bH$ and $\bH \perp [1,1]$ the girth can be computed explicitly.
    In all other cases $\iw(q) \geq 2$ and $V$ contains a totally isotropic subspace of dimension 2.
    Therefore we find linearly independent $v, w \in V_0$, such that $v - w \in V_0$, so $(0, v, w, 0)$ is a cycle of length 3.
    
    Now let $a \neq 0$.
    It immediately follows that $\g(q) =p$ if $\dim(q) = 1$ and $\cha(F) = p \neq 2$ and $\g(\bH) = \infty$ for $\bH$ over $\F_2$. 
    In general we observe that $\g(\Gcal_{q,a}) \leq 4$ in all remaining cases.
    For this we choose $v,w \in V_a$ linearly independent, which we can do as $|V_a| \geq 3$ (see \Cref{prop:NumberOfRepresentations}).
    Then $(0, v, v + w, w, 0)$ is a cycle of length $4$.

    This reduces the problem in the remaining cases to determining when $\g(\Gcal_{q,a}) = 3$, that is, when $\Gcal_{q,a}$ contains a triangle.      
    For forms of dimension $2$ we can use \Cref{lem:triangle}.
    A form of dimension at least $3$ contains the form $[a, a^{-1}]$ as a subform, so with \Cref{lem:triangle} we find a triangle in the corresponding representation graph.
    To see that this form is indeed contained as a subform, we recall from the classification of non-degenerate quadratic forms over finite fields (see \Cref{thm:ClassificationQFFIniteFields}) that the isometry class of $q$ is uniquely determined by $\dim(q)$ and $\det(q)$ if $\cha(F) \neq 2$ and by $\dim(q)$ and $\Delta(q)$ if $\cha(F) = 2$.
    Then 
    \begin{align*}
        q &\cong [a, a^{-1}] \perp \qf{\tfrac{1}{3}\det(q)} \perp (\dim(q)-3) \times \qf{1} \\ 
        \text{or} \quad q &\cong [a, a^{-1}]\perp[1,\Delta(q)] \perp \tfrac{\dim(q)-4}{2}\times [1,1]
    \end{align*}
    and $q$ contains at least one of the above $2$-dimensional subforms as claimed.
\end{proof}

Note that for any graph $G$, we have $g(G) = 3$ if and only if $\omega(G) \geq 3$, where $\omega(G)$ denotes the \emph{clique number} of $G$, i.e. the size of a clique in $G$ of maximal cardinality.
Therefore, partial information on the girth could have been extracted from the computation of clique numbers for representation graphs of quadratic forms carried out in \cite[Corollary 3.7, Theorem 4.4, Theorem 4.6]{lorenz:2023:cir}.

\subsection{Cycles of Length 3} 
We complement the computation of the minimum length of a cycle in a representation graph by counting the number of such cycles, starting with cycles of length $3$.
Again linearizing, we first consider those cycles that include the origin, i.e. cycles of the form $(0, v, w, 0)$.
The vector space spanned by the vertices of such a cycle has dimension $1$ or $2$, and we will count them according to this distinction.

For fixed $q$, $a \in \Dtilde(q)$ and $k \in\{1, 2\}$, we will write
\[
    c_k = \text{number of cycles $(0, v, w, 0)$ in $\Gcal_{q, a}$ with $\dim(\spn(v, w)) = k$.}
\]
While $c_k$ clearly depends on $q$ and $a$, we assume that this is clear from the context.
Then the number of all cycles of length $3$ in $\Gcal_{q,a}$ can be related to $c_1,c_2$ as follows.

\begin{proposition} \label{prop:CyclesLength3}
    Let $F$ be a finite field with $f$ elements, $q$ be a quadratic form over $F$, and let $a \in \Dtilde(q)$.
    The number of all cycles of length 3 in $\Gcal_{q, a}$ is given by 
    \[
        (c_1 + c_2)\cdot \frac{|V|}{3}.
    \]
    where $c_1$ and $c_2$ are as in \Cref{lem:CyclesLength3a=0} and \Cref{lem:CyclesLength3With0} depending on $a$ and $q$.
\end{proposition}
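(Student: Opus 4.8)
The plan is to count cycles of length $3$ by a standard double-counting argument that compares counting with a distinguished vertex against counting the cycle as an unordered geometric object. The key observation is that every cycle of length $3$ in $\Gcal_{q,a}$ is a triangle $\{x,y,z\}$ on three mutually adjacent vertices, and by the translation invariance established in \Cref{lem:symmetryreduction}, every such triangle can be translated so that one of its vertices is the origin. Concretely, I would set up the count as follows: let $N$ denote the total number of cycles of length $3$. For each vertex $x \in V$, the number of triangles containing $x$ equals the number of triangles containing the origin, because the translation $\tau_{-x} \in \iso(q)$ is a graph automorphism carrying $x$ to $0$ and restricting to a bijection between triangles through $x$ and triangles through $0$.

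The number of triangles through the origin is exactly $c_1 + c_2$: a triangle through the origin has the form $\{0, v, w\}$ with $v, w \in V_a$ distinct and $v - w \in V_a$, and such triangles are partitioned according to whether $\dim(\spn(v,w))$ equals $1$ or $2$. By definition, $c_1$ and $c_2$ count precisely the cycles $(0,v,w,0)$ of these two types, so the number of triangles through the origin is $c_1 + c_2$. Summing the count ``triangles through $x$'' over all $x \in V$ gives $\sum_{x \in V}(c_1 + c_2) = (c_1+c_2)\cdot |V|$, but this counts each triangle once for each of its three vertices. Dividing by $3$ yields
\[
    N = (c_1 + c_2) \cdot \frac{|V|}{3},
\]
which is the claimed formula.

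The main subtlety to address carefully is the matching between ``cycles'' and ``triangles'' (unordered $3$-sets of mutually adjacent vertices), since a single cycle as an unordered geometric triangle is encoded by the graph-theoretic definition of a cycle $(v_0, v_1, v_2, v_0)$ only up to rotation and reflection. I would clarify that by a cycle of length $3$ we mean the corresponding subgraph $\{x,y,z\}$ with all three edges present, so that $c_1 + c_2$ correctly counts the triangles through the origin without overcounting by the dihedral symmetries of the triangle. The only genuine obstacle is bookkeeping: verifying that $c_1$ and $c_2$ as defined (cycles through the origin, classified by the dimension of the span) together account for every triangle through the origin exactly once, which follows since every pair of distinct adjacent neighbors $v,w$ of the origin with $v - w \in V_a$ spans a subspace of dimension either $1$ or $2$, and these two cases are mutually exclusive and exhaustive.
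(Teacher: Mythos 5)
Your proof is correct and follows essentially the same route as the paper: a double count of incidences between cycles of length $3$ and their vertices, using translation invariance (via $\iso(q)$) to reduce to cycles through the origin, whose count is $c_1+c_2$ by definition. Your extra care about identifying cycles with unordered triangles is a reasonable clarification but does not change the argument.
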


\begin{proof}
    Counting in two ways, we obtain
    \begin{align*}
        3 \cdot \# \text{ cycles $C$ of length 3} 
        &= |\{(C, v)\mid C \text{ cycle of length 3, $C$ contains $v$}\}|\\
        &= |V| \cdot \# \text{ cycles $C$ containing $v$}\\
        &= |V| \cdot \# \text{ cycles $C$ containing $0$}.
    \end{align*}
    and the claim follows.
\end{proof}

Now we are left to determine $c_1$ and $c_2$, we do this first in the case of adjacency induced by isotropic vectors, i.e. $a = 0$.

\begin{lemma} \label{lem:CyclesLength3a=0}
    Let $q$ be an isotropic quadratic form over a finite field $F$ with $|F| = f$.
    Let $t_i$ be the number of totally isotropic subspaces of dimension $i$.
    For the graph $\Gcal_{q, 0}$, we have
    \begin{equation*}
        c_1 = t_1 \cdot \binom{f - 1}{2} \quad \text{and} \quad c_2 = t_2 \cdot \frac{(f^2 - 1) \cdot (f^2 - f)}{2}.
    \end{equation*}
\end{lemma}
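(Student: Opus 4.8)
We count cycles $(0, v, w, 0)$ in $\Gcal_{q,0}$, so $v, w \in V_0 \setminus \{0\}$ are distinct with $v - w \in V_0$, and we split by $\dim(\spn(v,w))$. By \Cref{lem:triangle}, since $a = 0$, the triple $0, v, w$ forms a triangle exactly when $W = \spn(v,w)$ is totally isotropic. So both $c_1$ and $c_2$ count triangles whose vertices lie in a totally isotropic subspace of the appropriate dimension.

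**The plan for $c_1$.** The plan is to count ordered pairs $(v, w)$ first, then correct for the overcounting inherent in the definition of $c_k$ (which treats the cycle as an unordered object through the origin). For $c_1$, $v$ and $w$ are nonzero, distinct, and linearly dependent, with all of $\spn(v)$ totally isotropic. First I would note that a totally isotropic line is exactly a $1$-dimensional totally isotropic subspace, of which there are $t_1$. Each such line contains $f - 1$ nonzero vectors, all isotropic, and any two distinct such vectors $v, w$ automatically satisfy $v - w \in V_0$ since the whole line is isotropic. Thus I need to choose an unordered pair $\{v, w\}$ of distinct nonzero vectors from the line, giving $\binom{f-1}{2}$ choices per line. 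Here the cycle $(0, v, w, 0)$ is determined by the unordered pair $\{v, w\}$ together with the origin, so no further division is needed, yielding $c_1 = t_1 \cdot \binom{f-1}{2}$.

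**The plan for $c_2$.** For $c_2$ I would fix a totally isotropic plane $W$, of which there are $t_2$, and count triangles $(0, v, w, 0)$ with $\spn(v,w) = W$. Since $W$ is totally isotropic, every nonzero vector of $W$ lies in $V_0$ and every difference of two vectors in $W$ is again in $V_0$; hence the only constraints are that $v, w$ be nonzero and linearly independent (so that they span all of $W$). The natural count is of ordered linearly independent pairs $(v, w)$ in a $2$-dimensional space: there are $f^2 - 1$ choices for $v \neq 0$ and $f^2 - f$ choices for $w \notin \spn(v)$. The cycle $(0,v,w,0)$ viewed as a subgraph on vertices $\{0, v, w\}$ is unchanged under swapping $v \leftrightarrow w$, so each unordered pair is counted twice; dividing by $2$ gives $c_2 = t_2 \cdot \frac{(f^2-1)(f^2-f)}{2}$.

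**Main obstacle.** The only genuine subtlety is being careful about what is being counted — the quantity $c_k$ counts cycles $(0,v,w,0)$, which I read as unordered triangles through the origin, and matching the stated symmetry factor (dividing by $2$ for $c_2$ but using $\binom{f-1}{2}$ directly for $c_1$) requires consistent bookkeeping. I would double-check that distinct totally isotropic subspaces contribute disjoint families of triangles: a triangle $\{0, v, w\}$ determines $\spn(v,w)$ uniquely, so there is no double-counting across different $W$'s of the same dimension, and the dimension $\dim(\spn(v,w))$ cleanly separates the $c_1$ and $c_2$ cases. The remaining ingredient, namely that every vector in a totally isotropic subspace is isotropic and every pairwise difference stays isotropic, is immediate from the definition of totally isotropic, so no serious computation is needed.
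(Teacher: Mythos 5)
Your proposal is correct and follows essentially the same route as the paper: it classifies triangles through the origin by the totally isotropic subspace $\spn(v,w)$ they span (via \Cref{lem:triangle}), then counts unordered pairs within each line ($\binom{f-1}{2}$ per line) and unordered bases within each plane ($\tfrac{(f^2-1)(f^2-f)}{2}$ per plane). Your explicit bookkeeping of the ordered-versus-unordered count and of disjointness across subspaces is only stated implicitly in the paper's proof, but the argument is the same.
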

\begin{proof}
    Each cycle $(0, v, w, 0)$ with $v, w$ linearly dependent uniquely determines a totally isotropic subspace of dimension 1 and for each such subspace, there are clearly $\binom{f - 1}2$ choices of $v, w$ to construct a cycle.

    If $C = (0, v, w, 0)$ is a cycle of length 3 with $v, w$ linearly independent, then $W = \spn(v, w)$ is a totally isotropic subspace (see also \Cref{lem:triangle}).    
    For any totally isotropic subspace $W$ of dimension $2$, vectors $x,y \in W$ correspond to such a cycle if and only if $x,y$ are a basis of $W$, giving the number in the assertion.
\end{proof}

To obtain explicit numbers one can use the well-known number of totally isotropic subspaces of fixed dimension, see for example \cite[Theorem 1.41 (i)]{HirschfeldThas:GaloisGeometries} for a modern exposition of this question, albeit in a geometric language.

Now we turn to the case $a \neq 0$.
Here we can base the calculation of the number of cycles of length 3 on suitable decompositions of $q$ into lower dimensional subforms.
Over fields of characteristic not 2, we use subforms of the kind $\qf a$ (non-degenerate only if $\cha(F) \neq 2$), and over fields of characteristic not 3, we use subforms of the kind $[a,a^{-1}]$ (non-degenerate only if $\cha(F) \neq 3$).
Obviously, the respective scopes of these approaches have a huge overlap, but we know of no characteristic-free approach.

\begin{lemma} \label{lem:CyclesLength3With0}
    Let $F$ be a finite field, $a \in \D(q)$.
    \begin{enumerate}
        \item \label{lem:CyclesLength3With00} We have $c_1 = 0$ if $\cha(F) \neq 3$ and $c_1 = \tfrac{|V_a|}{2}$ if $\cha(F) = 3$.
        \item \label{lem:CyclesLength3With01} Let $\cha(F) \neq 2$. If there exists a form $q'$ such that $q = \qf{a} \perp q'$, then
        \[c_2 = \frac12\cdot |V_{q, a}| \cdot |V_{q', 3a}|,\]
        otherwise $c_2 = 0$.
        \item \label{lem:CyclesLength3With02} Let $\cha(F) \neq 3$. If there exists a form $q'$ such that $q = [a, a^{-1}] \perp q'$, then
        \[c_2 = \frac{|V_{[a, a^{-1}], 3a}| \cdot |\og(q)|}{|\og([a, a^{-1}])| \cdot |\og(q')|},\]
        otherwise $c_2 = 0$.
    \end{enumerate} 
\end{lemma}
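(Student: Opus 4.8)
The plan is to fix the vertex $0$ and count triangles $(0,v,w,0)$, using \Cref{lem:triangle}: such a configuration is a triangle exactly when $v,w\in V_a\setminus\{0\}$ are distinct with $b_q(v,w)=a$ (equivalently $q(v-w)=a$). Since a triangle with $v,w$ linearly independent determines $W=\spn(v,w)$ uniquely, $c_2$ is additive over the $2$-dimensional subspaces carrying such a triangle, while $c_1$ collects the configurations with $w\in\spn(v)$. For \ref{lem:CyclesLength3With00} I argue directly: if $w=\lambda v$ with $v,w\in V_a$, then $\lambda^2a=q(w)=a$ forces $\lambda=\pm1$, and $v\neq w$ excludes $\lambda=1$, so $w=-v$; the edge condition $q(v-w)=q(2v)=4a=a$ then holds iff $3a=0$, i.e. $\cha(F)=3$. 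In that case each $v\in V_a$ gives the triangle $\{0,v,-v\}$, counted in pairs since $v\neq -v$, so $c_1=|V_a|/2$; otherwise $c_1=0$.

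For \ref{lem:CyclesLength3With01}, where $\cha(F)\neq 2$, I pass to the coordinates $s=v-w$, $t=v+w$, a linear automorphism of $V$. A short computation turns $q(v)=q(w)=a$ and $q(v-w)=a$ into the equivalent system $q(s)=a$, $q(t)=3a$, $b_q(s,t)=0$, with $v,w$ linearly independent precisely when $s,t$ are. Thus ordered pairs $(v,w)$ correspond bijectively to pairs $(s,t)$ with $s\in V_a$, $t\in V_{3a}$ and $t\perp s$. Using \Cref{cor:OGtransitive} to move $s$ to the vector $e$ spanning the $\qf{a}$ summand of $q=\qf{a}\perp q'$, we have $e^\perp=V'$, so the number of admissible $t$ is $|V_{q',3a}|$ independently of $s$; hence there are $|V_a|\cdot|V_{q',3a}|$ ordered pairs, and dividing by $2$ for the two orderings of a triangle yields the claimed value. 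If no splitting $q=\qf{a}\perp q'$ exists, then no $W$ has $q|_W$ isometric to $[a,a^{-1}]$ via a diagonal $\qf{a}$, and one checks $c_2=0$.

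For \ref{lem:CyclesLength3With02}, where $\cha(F)\neq 3$ (so the halving trick of the previous part is unavailable in characteristic $2$), I count geometrically. By \Cref{lem:triangle} every $2$-dimensional triangle spans a $W$ with $q|_W\cong[a,a^{-1}]$, and each such $W$ carries the same number $N$ of triangles through $0$. Writing $q=[a,a^{-1}]\perp q'$, Witt's Extension Theorem makes $\og(q)$ act transitively on the subspaces $W$ with $q|_W\cong[a,a^{-1}]$ (forcing $q|_{W^\perp}\cong q'$), with stabilizer $\og([a,a^{-1}])\times\og(q')$, so orbit--stabilizer gives $|\og(q)|/(|\og([a,a^{-1}])|\cdot|\og(q')|)$ such subspaces. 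It then remains to identify $N$ with $|V_{[a,a^{-1}],3a}|$: inside one such $W$ the assignment $\{0,v,w\}\mapsto u:=v+w$ lands in $V_{[a,a^{-1}],3a}$ since $q(v+w)=q(v)+q(w)+b_q(v,w)=3a$, and recovering $v$ from $u$ means solving $q(v)=a$ together with the linear condition $b_q(u,v)=3a$; this set of solutions is closed under $v\mapsto u-v$, so its two B\'ezout solutions are exactly $v$ and $w$, matching each triangle to a unique $u$ and yielding $N=|V_{[a,a^{-1}],3a}|$. Absence of the splitting again forces $c_2=0$.

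The main obstacle is the careful bookkeeping of degenerate (linearly dependent) solutions, which is exactly where the three characteristic hypotheses interlock. In \ref{lem:CyclesLength3With01} the pairs with $t=0$ are precisely the linearly dependent ones, and $t=0$ lies in $V_{q',3a}$ only when $3a=0$; since characteristic $3$ is the one case \emph{not} also covered by \ref{lem:CyclesLength3With02}, the separation of $c_1$ from $c_2$ there must be handled with particular care. Symmetrically, the delicate point in \ref{lem:CyclesLength3With02} is to show that every $u\in V_{[a,a^{-1}],3a}$ genuinely produces a nondegenerate triangle: one must verify that the line $b_q(u,\cdot)=3a$ always meets the conic $q=a$ and that its two intersection points are distinct (the coincidence $u=2v$ would force $4a=3a$, impossible for $a\neq 0$), so that $N$ equals $|V_{[a,a^{-1}],3a}|$ with no stray factor and the argument runs uniformly in characteristic $2$.
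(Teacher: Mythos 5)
Your proof follows essentially the same route as the paper's on all three parts: (a) is the paper's argument; in (b) your substitution $(s,t)=(v-w,v+w)$ is just a reparametrization of the paper's decomposition $w=\tfrac12 v+w_0$ with $w_0\perp v$ and $q(w_0)=\tfrac34 a$, and both arguments conclude by Witt transitivity that the relevant count is $|V_{q',3a}|$; in (c) you use the identical orbit--stabilizer argument for the number of planes $W$ with $q|_W\cong[a,a^{-1}]$ and the identical diagonal map $\{0,v,w\}\mapsto u=v+w\in W_{3a}$. However, the two points you flag but do not close deserve comment.

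First, in (c) you say one must verify that the line $b_q(u,\cdot)=3a$ meets the conic $q=a$, but you only prove distinctness of the intersection points, not their existence; without existence you only get an upper bound $N\leq|V_{[a,a^{-1}],3a}|$. The paper disposes of this by invoking \Cref{lem:NumberOfSums}; the cleanest fix is transitivity: since $\cha(F)\neq 3$ the form $q|_W\cong[a,a^{-1}]$ is non-degenerate and $0\notin W_{3a}$, so $\og(q|_W)$ acts transitively on $W_{3a}$ by \Cref{cor:OGtransitive}, and since \Cref{lem:triangle} provides one triangle in $W$, hence one $u_0\in W_{3a}$ that is a sum of two vectors of $W_a$, every $u\in W_{3a}$ is such a sum. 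Second, your worry about $t=0$ in characteristic $3$ is a genuine issue, and it affects the paper's own proof of (b) in the same way. With the paper's Section~2 convention $V_{q',3a}=\{x\mid q'(x)=3a\}$, which contains $0$ when $\cha(F)=3$, the quantity $\tfrac12|V_{q,a}|\cdot|V_{q',3a}|$ also counts the $|V_a|/2$ degenerate pairs $(v,-v)$, so it equals $c_1+c_2$ rather than $c_2$. Concretely, for $q=\qf{1}\perp\bH$ over $\F_3$ and $a=1$ one has $|V_{q,1}|=12$ and $|V_{\bH,0}|=5$ by \Cref{prop:NumberOfRepresentations}, so the formula gives $30$, whereas a direct count yields $c_2=24$ and $c_1=6$. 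Thus part (b) is correct only if $V_{q',3a}$ is read as excluding the zero vector (the convention of the paper's introduction, not of its Section~2); this matters because \Cref{prop:CyclesLength3} adds $c_1+c_2$, and the inclusive reading would count $c_1$ twice. Your instinct that the separation of $c_1$ from $c_2$ "must be handled with particular care" is exactly right, but a complete proof has to make the exclusion of $t=0$ explicit rather than merely note it.
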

\begin{proof}
    \ref{lem:CyclesLength3With00}: 
    By \Cref{lem:triangle}, we see that $c_1 = 0$ when $\cha(F) \neq 3$ and when $\cha(F) =3$, for any cycle $(0, v, w, 0)$ with $v, w$ linearly dependent, we have $w = -v$ and vice versa.

    \ref{lem:CyclesLength3With01}:
    Let $v \in V_a$.
    First we count the number of ways to extend $0, v$ to a cycle $(0, v, w, 0)$ of length $3$.
    Since $q(v) = a \neq 0$, $v$ splits off orthogonally.
    We write $w = \lambda v + w_0$ with $w_0 \perp v$ and $\lambda \in F$.
    By the proof of \Cref{lem:triangle}, we have 
    \[a = b_q(v, w) = b_q(v, \lambda v + w_0) = b_q(v, \lambda v) + b_q(v, w_0) = \lambda b_q(v, v) = 2a\lambda,\]
    so $\lambda = \frac12$.
    This implies
    \[a = q(w) = q\left(\frac12 v + w_0\right) = \frac14 q(v) + q(w_0) +\frac12 b_q(v, w_0)= \frac a4 + q(w_0),\]
    so $q(w_0) = \frac34 a$.
    The number of possibilities to extend $0, v$ to a cycle of length $3$ is thus given by $|V_{q', 3a/4}| = |V_{q', 3a}|$.
    This counts every cycle of length 3 exactly twice.

    \ref{lem:CyclesLength3With02}:
    Each cycle of length 3 of the form $(0, v, w, 0)$ with $v, w$ linearly independent determines a unique subspace $W = \spn(v, w)$ with $q|_W \cong [a, a^{-1}]$ by \Cref{lem:triangle}.
    We first determine the number of such subspaces and then count the number of cycles within such a subspace.
    
    By Witt's Extension Theorem \cite[Theorem 8.3]{MR2427530}, $\og(q)$ acts transitively on the set of subspaces $W$ such that $q|_W \cong [a, a^{-1}]$.
    Since isometries preserve orthogonality, the stabilizer of a fixed subspace $W$ such that $q|_W \cong [a, a^{-1}]$ is given by $\og([a, a^{-1}]) \times \og(q')$.
    Therefore, by the orbit stabilizer theorem, we deduce that there are 
    \[\frac{|\og(q)|}{|\og([a, a^{-1}]) \times \og(q')|} = \frac{|\og(q)|}{|\og([a, a^{-1}])| \cdot |\og(q')|}\]
    such subspaces $W$.

    Now we fix $W$ with $q|_W = [a, a^{-1}]$ and count the number of cycles containing $0$ in $W$.
    By the proof of \Cref{lem:triangle}, two vectors $v, w \in W_a = V_{a} \cap W$ form a cycle of length 3 with $0$ if and only if $b_q(v, w) = a$,  i.e. if and only if $v + w \in W_{3a}$.
    For every $u \in W_{3a}$, there are unique $v,w \in W_a$ with $u = v+w$, by \Cref{lem:NumberOfSums} \ref{lem:NumberOfSums2}.
    So each $u \in W_{3a}$ determines a unique cycle of length $3$ containing $0$ within $W$, so there are $|W_{3a}| = |V_{[a, a^{-1}],3a}|$ such.
\end{proof}

The following example illustrates how to use \Cref{prop:CyclesLength3}.

\begin{example}
    We consider the 4-fold sum of squares form $q = \qf{1, 1, 1, 1}$ over $\F_5$ and want to compute the number of cycles of length 3 in $\Gcal_{q, 1}$.
    Note that $q$ is hyperbolic, since $-1 = 4 = 2^2$ is a square in $\F_5$.
    
    We have $c_1 = 0$ since $\cha(\F_5) \neq 3$.
    For $c_2$ both approaches introduced in \Cref{lem:CyclesLength3With0} work, since $\cha(\F_5) = 5$.
    So we consider the isometries 
    \[\qf{1} \perp \qf{1, 1, 1} \cong q \cong [1, 1] \perp [1, 1].\]

    Using the decomposition on the left, we get
    \begin{align*}
        c_2 = \frac12 \cdot (5^3 - 5) \cdot (5^2 - 5) = 1200,
    \end{align*}
    by \Cref{lem:CyclesLength3With0} \ref{lem:CyclesLength3With01} and \Cref{prop:NumberOfRepresentations}.

    For the decomposition on the right we use \Cref{lem:CyclesLength3With0} \ref{lem:CyclesLength3With02}.
    Inserting the orders of the respective orthogonal groups (which can be found, for example, in \cite[Sections 3.7.2, 3.8.2]{book:Wilson}), we get
    \begin{align*}
        c_2 = \frac{(5 + 1) \cdot 2\cdot 5^2 \cdot (5^2 - 1) \cdot (5^2 - 1)}{\left( 2 \cdot (5 + 1)  \right) \cdot \left( 2 \cdot (5 + 1) \right)} = \frac{172800}{144} = 1200,
    \end{align*}
    in accordance with the first calculation.
    
    By \Cref{prop:CyclesLength3} the total number of cycles of length 3 is
    \[\frac{1200 \cdot 5^4}3 = 250000.\]
\end{example}

\subsection{Cycles of Length 4} 
By \Cref{thm:girth} the minimum length of a cycle is $4$ only for $2$-dimensional forms and one exception, the form $\bH \perp[1,1]$ over $\F_2$.
We will now compute the number of $4$-cycles for arbitrary $2$-dimensional forms.
Once we have done that, we will briefly discuss why counting such cycles in higher dimensional forms seems to be a much harder task and compute the number of $4$-cycles of the above $4$-dimensional exception by hand.

We base much of our investigation of cycles $(x_0, x_1, x_2, x_3, x_0)$ on their \emph{diagonal vectors} $x_2 - x_0$ and $x_3 - x_1$, which in many cases determine the cycle itself.
A first useful observation is that if $(0,u,w,v,0)$ is a cycle of length $4$, then we have two distinct $0$-$w$-paths by traversing the cycle in opposite directions from $0$ to $w$.
Now if $\dim(q) = 2$, then \Cref{lem:NumberOfSums} implies that these are the only $0$-$w$-paths of length $2$ in $\Gcal_{q,a}$ and 
\begin{equation} \label{eq:unique:decomposition:diagonal}
    w = u+v,
\end{equation}
since every such path gives rise to a decomposition $u + (w - u) = w = (w - v) + v$ with $x, y \in V_a$, and up to order there is only one such decomposition, so $(w - u) = v$ and $(w - v) = u$.

Essentially, this observation is sufficient to compute the number of cycles of length 4 in representation graphs of forms of dimension 2.

\begin{theorem} \label{thm:cycles4anisotropic}
    Let $F$ be a finite field with $f$ elements, $a \in \Dtilde(q)$, and let $q$ be a quadratic form of dimension 2 over $F$.
    The number of cycles of length $4$ in $\Gcal_{q, a}$ is given as in \Cref{tbl:NumberCyclesLength4}.
    \begin{table}[!ht]
    \renewcommand{\arraystretch}{1.5}
        \begin{tabular}{|c|c||c|c|}
            \hline
            condition on $a$ & condition on $q$ & $\cha(F) \neq 2$ & $\cha(F) = 2$\\  
            \hline\hline
            \multirow{2}{*}{$a \neq 0$} & $q$ isotropic & $\frac{f^2\cdot (f-1) \cdot (f-3)}{8}$ & $\frac{f^2\cdot (f-1) \cdot (f-2)}{8}$  \\ \cline{2-4}
             & $q$ anisotropic & $\frac{f^2\cdot (f + 1) \cdot(f - 1)}{8}$ & $\frac{f^3\cdot (f+1)}{8}$  \\
            \hline 
            $a = 0$ & $q$ isotropic & \multicolumn{2}{|c|}{$\frac{f^2}4 \cdot \left(6\cdot \binom{f - 1} 3 + (f - 1)^2\right)$}  \\
            \hline
        \end{tabular}
        \caption{Number of cycles of length 4 for $2$-dimensional forms} \label{tbl:NumberCyclesLength4}
        \end{table}
\end{theorem}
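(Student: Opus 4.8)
The plan is to count first the $4$-cycles through a fixed vertex and then pass to the global count by the same double-counting as in \Cref{prop:CyclesLength3}: since $\iso(q)$ acts transitively on $V$ by translations (\Cref{lem:symmetryreduction}), every vertex lies on the same number of $4$-cycles, and each $4$-cycle meets four vertices, so the total number of $4$-cycles equals $\frac{|V|}{4}$ times the number of $4$-cycles through the origin. As $|V| = f^2$, this already explains the factor $\frac{f^2}{4}$ appearing in every entry of \Cref{tbl:NumberCyclesLength4}, and it remains to count the cycles $(0,u,w,v,0)$.

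For $a \neq 0$ I would use the diagonal observation \eqref{eq:unique:decomposition:diagonal}: any such cycle satisfies $w = u + v$ with $u, v \in V_a$, and the reflection argument behind \Cref{lem:NumberOfSums} shows that the two ways of traversing the cycle are the only decompositions of the diagonal $w$, so the unordered pair $\{u,v\}$ is uniquely recovered from the cycle. Hence $4$-cycles through $0$ correspond bijectively to unordered pairs $\{u,v\} \subseteq V_a$ with $u \neq v$ and $u + v \neq 0$, the last condition ruling out the degenerate diagonal $w = 0$ (note $u,v \neq 0$ automatically since $a \neq 0$). Counting these pairs amounts to $\binom{|V_a|}{2}$ minus the pairs with $v = -u$; since $u \mapsto -u$ is a fixed-point-free involution of $V_a$ when $\cha(F) \neq 2$ and the identity when $\cha(F) = 2$, one subtracts $\tfrac{|V_a|}{2}$ respectively $0$. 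Substituting $|V_a| = f+1$ for anisotropic and $|V_a| = f-1$ for isotropic $q$ from \Cref{prop:NumberOfRepresentations} and multiplying by $\frac{f^2}{4}$ yields the four $a \neq 0$ entries. Equivalently, one may read these numbers off \Cref{lem:NumberOfSums} \ref{lem:NumberOfSums3} and \ref{lem:NumberOfSums4}: the admissible diagonals are exactly the nonzero elements of $V_a + V_a$ admitting a non-unique (hence two-ordered-tuple) decomposition, of which there are $|V_a + V_a| - 1$ minus the count of uniquely-decomposable vectors.

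The case $a = 0$ requires a separate argument, since \eqref{eq:unique:decomposition:diagonal} and \Cref{lem:NumberOfSums} presuppose $a \neq 0$. Here $q \cong \bH = [0,0]$, so after fixing coordinates $\bH(x,y) = xy$ and $V_0$ is the union of the two coordinate axes; consequently $\Gcal_{\bH,0}$ is the $f \times f$ rook's graph, in which two vertices are adjacent precisely when they share a row or a column. I would count the $4$-cycles through the origin by casework on the two neighbours $P,R$ of $0$: if both lie on the same axis there are $f-3$ common neighbours distinct from $0,P,R$, contributing $2\binom{f-1}{2}(f-3)$, while if they lie on different axes there is exactly one admissible common neighbour, contributing $(f-1)^2$. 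The total $(f-1)(f-2)(f-3) + (f-1)^2 = 6\binom{f-1}{3} + (f-1)^2$, multiplied by $\frac{f^2}{4}$, gives the last entry.

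The main obstacle is bookkeeping rather than depth: one must check in each case that the counted objects are in genuine bijection with $4$-cycles, i.e. that all four vertices are distinct and that no cycle is counted twice. For $a \neq 0$ this is the careful exclusion of the degenerate configurations $u = v$ and $w = u + v = 0$, together with the verification, via the uniqueness in \eqref{eq:unique:decomposition:diagonal}, that the pair $\{u,v\}$ determines the cycle; for $a = 0$ it is the common-neighbour casework in the rook's graph together with the realization that the decomposition uniqueness exploited for $a\neq 0$ genuinely fails, forcing the independent computation.
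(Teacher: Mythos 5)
Your proposal is correct and follows essentially the same route as the paper: translation/double-counting to reduce to cycles through the origin, the diagonal uniqueness \eqref{eq:unique:decomposition:diagonal} to identify a $4$-cycle $(0,u,w,v,0)$ with the unordered pair $\{u,v\}\subseteq V_a$ when $a\neq 0$, and for $a=0$ the same split into the cases where the two neighbours of the origin span an isotropic line or two distinct ones. The only cosmetic differences are that you count the pairs $\{u,v\}$ directly via the involution $u\mapsto -u$ (where the paper counts admissible diagonals $w$ using \Cref{lem:NumberOfSums} \ref{lem:NumberOfSums3} and \ref{lem:NumberOfSums4}) and that you phrase the $a=0$ case in rook's-graph coordinates rather than via totally isotropic subspaces; both give identical counts in all cases of \Cref{tbl:NumberCyclesLength4}.
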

\begin{proof}
    We start with the case $a \neq 0$ by first counting all $4$-cycles containing $0$.
    If $C = (0, u, w, v, 0)$ is such a cycle, then $w \in V_a + V_a$ with exactly two distinct $0$-$w$-paths in $\Gcal_{q,a}$ (see \eqref{eq:unique:decomposition:diagonal}). 
    The number of such $w$ is
    \begin{equation} \label{eq:number:0:4:cycle}
        \frac{|V_a|^2}{2} - |V_a|, \text{ if } \cha(F) \neq 2 \quad \text{and} \quad  \frac{|V_a|\cdot(|V_a|-1)}{2}, \text{ if } \cha(F) = 2
    \end{equation}
    by \Cref{lem:NumberOfSums} \ref{lem:NumberOfSums3} and \ref{lem:NumberOfSums4}.

    Now any cycle can be mapped to such a cycle using a translation by $-x$ where $x$ is one of the four vertices of the cycle.
    So the total number of $4$-cycles is $\frac{|V|}{4}$ times the corresponding value in \eqref{eq:number:0:4:cycle}.
 
    We now turn to the case $a = 0 \in \Dtilde(q)$, so necessarily $q \cong \bH$.
    If $C=(0, u, w, v, 0)$ is a $4$-cycle with $u, v$ linearly dependent, then $\spn(u,v,w)$ is $1$-dimensional and necessarily totally isotropic.
    To see this write $v = \lambda u$ with $\lambda \in F \setminus \{0, 1\}$.
    With \Cref{prop:DiagonalOrthogonal} we have
    \[0 = b_q(w, u - v) = (1 - \lambda) b_q(w, u)\]
    and thus $b_q(w, u) = 0$, which then implies
    \[0 = q(w - u) = q(w) + q(u) - b_q(w, u) = q(w).\]
    If $u, w$ were linearly independent, they would be an orthogonal basis consisting of isotropic vectors, so $q \cong \qf{0, 0}$ would be degenerate, a contradiction.

    Any three pairwise distinct non-zero elements in this subspace give rise to $3!/2 = 3$ distinct $4$-cycles, so a total of $3 \cdot \binom{f-1}{3}$ distinct $4$-cycles within this particular totally isotropic subspace.
    Now there are $2$ such (see \Cref{prop:NumberOfRepresentations}), so in total there are $6 \cdot \binom{f - 1}{3}$ such cycles.

    Let now $(0, u, w, v, 0)$ be a cycle with $u, v$ linearly independent and let $\lambda, \mu \in F$ be such that $w = \lambda u + \mu v$.
    Obviously, we have $b_q(u, v) \neq 0$.
    We also have
    \begin{equation*}
        0 = q(w - u) = (\lambda - 1)\mu b_q(u, v),
    \end{equation*}
    so $\lambda = 1$ or $\mu = 0$ and by symmetry also $\lambda = 0$ or $\mu = 1$.
    Since $w \notin \{0, u, v\}$ we have $\lambda = \mu = 1$ and this choice clearly gives a cycle as desired.
    We have $(f - 1)^2$ possibilities to choose $u, v$ isotropic and linearly independent, and each choice gives a different cycle of length 4, so there are $(f - 1)^2$ such cycles.

    The assertion finally follows from an argument analogous to \Cref{prop:CyclesLength3}.
\end{proof}

Besides the explicit computation of the number of $4$-cycles, we can give some more structural information.
Some of this works in arbitrary dimension $\geq 2$.

\begin{proposition} \label{prop:DiagonalOrthogonal}
    Let $q$ be a quadratic form of dimension $\geq 2$ over an arbitrary field, $a \in F$ and $C$ be a cycle of length 4 with diagonals $d_1, d_2$ in $\Gcal_{q, a}$. 
    \begin{enumerate}
        \item \label{prop:DiagonalOrthogonal1} The diagonals $d_1,d_2$ are perpendicular, i.e. $b_q(d_1, d_2) = 0$.
    \end{enumerate}
    Now let $\dim(q) = 2$ and $a \in F^\ast$. Then also
    \begin{enumerate}[resume]
        \item \label{lem:PossibleDiagLengthCharNot2} If $\cha(F) \neq 2$, we have $q \cong \qf{q(d_1), q(d_2)}$.
        \item \label{lem:PossibleDiagLengthChar2} If $\cha(F) = 2$, we have $d_1 = d_2$.
    \end{enumerate}
\end{proposition}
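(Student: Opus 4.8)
The plan is to fix a $4$-cycle and, by translation invariance (translation preserves the graph and leaves every difference vector, hence $b_q(d_1,d_2)$, unchanged), assume it has the form $(0,u,w,v,0)$, so that the diagonals are $d_1 = w$ and $d_2 = v - u$. The four edge conditions then read $q(u) = q(w - u) = q(v - w) = q(v) = a$, and all three parts will follow from expanding these via (QF2).

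For \ref{prop:DiagonalOrthogonal1} I would expand just two of them. From $q(w - u) = a$ together with $q(u) = a$ one gets $q(w) = b_q(w,u)$, and from $q(v - w) = a$ together with $q(v) = a$ one gets $q(w) = b_q(v,w)$. Subtracting yields $b_q(w, v - u) = 0$, that is $b_q(d_1,d_2) = 0$. This argument uses neither the dimension nor the characteristic, so it gives the full generality claimed in \ref{prop:DiagonalOrthogonal1}.

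For the two-dimensional parts I would additionally invoke \eqref{eq:unique:decomposition:diagonal}, which gives $w = u + v$ when $\dim(q) = 2$ and $a \in F^\ast$; hence $d_1 = u + v$ and $d_2 = v - u$. Part \ref{lem:PossibleDiagLengthChar2} is then immediate, since in characteristic $2$ we have $v - u = u + v$, so $d_2 = u + v = w = d_1$. For part \ref{lem:PossibleDiagLengthCharNot2} I would first record that $q(d_1) + q(d_2) = q(u + v) + q(v - u) = 4a$, the cross terms cancelling by symmetry of $b_q$, and note $4a \neq 0$ as $\cha(F) \neq 2$ and $a \neq 0$. The diagonals are nonzero because the cycle vertices are distinct, and they are perpendicular by \ref{prop:DiagonalOrthogonal1}; I would prove linear independence by contradiction: if $d_2 = \lambda d_1$ with $\lambda \in F^\ast$, then $0 = b_q(d_1,d_2) = 2\lambda q(d_1)$ forces $q(d_1) = 0$ and hence $q(d_2) = \lambda^2 q(d_1) = 0$, contradicting $q(d_1) + q(d_2) = 4a \neq 0$. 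Being independent and orthogonal, $d_1,d_2$ form a basis of $V$ in which $q(x d_1 + y d_2) = x^2 q(d_1) + y^2 q(d_2)$, i.e. $q \cong \qf{q(d_1), q(d_2)}$.

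The only genuinely delicate point is the linear-independence step in \ref{lem:PossibleDiagLengthCharNot2}: the diagonalization $q \cong \qf{q(d_1), q(d_2)}$ is valid only once $d_1, d_2$ are known to span $V$, and a priori the diagonals could be parallel, which is exactly the degenerate configuration that has to be treated separately in the isotropic $a = 0$ setting. The relation $q(d_1) + q(d_2) = 4a \neq 0$ is what excludes this cleanly, so I would take care to derive it before attempting the diagonalization. Everything else reduces to routine expansion via (QF2).
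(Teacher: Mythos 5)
Your proof is correct and takes essentially the same route as the paper's: translate the cycle to $(0,u,w,v,0)$, expand the edge relations and subtract to obtain $b_q(w,u-v)=0$, use $w = u+v$ from \eqref{eq:unique:decomposition:diagonal} to identify the diagonals when $\cha(F)=2$, and diagonalize $q$ in the orthogonal basis $d_1, d_2$ when $\cha(F)\neq 2$. The only minor deviation is the linear-independence step in \ref{lem:PossibleDiagLengthCharNot2}: the paper gets anisotropy of $w$ directly from \Cref{lem:Va+VaCapV0trivial}, whereas you derive $q(d_1)+q(d_2)=4a\neq 0$ from $w=u+v$ and rule out parallel diagonals by contradiction --- both are valid, and yours yields the identity $q(d_1)+q(d_2)=4a$ as a small bonus.
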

\begin{proof}
    After a translation, we can assume that $C = (0, u, w, v, 0)$.
    Then
    \[a = q(w - v) = q(w) + q(v) - b_q(w, v) = q(w) + a - b_q(w, v)\]
    and similarly for $w-u$.
    Subtracting these equalities and using the bilinearity of $b_q$ implies $b_q(w, u - v) = 0$, proving \ref{prop:DiagonalOrthogonal1}.

    If $a \neq 0$ and $\cha(F) \neq 2$, then $w$ is not isotropic by \Cref{lem:Va+VaCapV0trivial}.
    This implies that $w,u-v$ are linearly independent, which proves \ref{lem:PossibleDiagLengthCharNot2}.
    For \ref{lem:PossibleDiagLengthChar2} we have $w =u+v = u-v$, by \eqref{eq:unique:decomposition:diagonal} and $\cha(F) = 2$, so both diagonals are identical.
\end{proof}

In general, computing the number of cycles of length 4 in higher dimensions seems to be a much harder task.
In dimension 2, \eqref{eq:unique:decomposition:diagonal} implies that a cycle of the form $(0, u, w, v, 0)$ is already determined by $w$, but in higher dimensions two different cycles can have the same diagonal vector.
This happens e.g. for $F = \F_5, q = \qf{1, 1, 1}, a = 1$ and $w = (1, 4, 0)$, then we have the cycles 
$C_i = (0, u_i, w, v_i, 0)$ where 
\begin{align*}
    u_1 = (0, 4, 0), v_1 = (2, 1, 1)\quad\text{ and }\quad u_2 = (0, 4, 0), v_2 = (1, 0, 0).
\end{align*}
Nevertheless, we wanted to compute the number of cycles of length 4 at least in those cases where cycles of length 4 are the shortest cycles occurring in $\Gcal_{q, a}$, and these cases are covered by our results with only one exception, the form $\bH \perp [1,1]$ over $\F_2$ and $a=1$.
We discuss this briefly as a concluding example to illustrate the greater complexity in higher dimensions.

First, we count the $4$-cycles containing the origin.
We observe (by explicit enumeration, which we omit), that there are $15$ possible choices for the vertex $w$ opposing the origin, fixing one of the diagonals.
For each of these $15$ choices of $w$ there are $6$ distinct $0$-$w$-paths of length $2$ in the graph. 
This gives a total of $15 \cdot \binom{6}{2} = 225$ cycles of length $4$ containing the origin, and a total of $\frac{|\F_2|^4}{4} \cdot 225 = 900$ cycles of length $4$.

\printbibliography
\end{document}